\title[On bilinear forms based on the resolvent of random matrices]{On bilinear forms based on the resolvent of 
large random matrices}
\author[Hachem et al.]{Walid Hachem, Philippe Loubaton, Jamal Najim
  and Pascal Vallet} \date{\today}
\newtheorem{theo}{Theorem}[section]
\newtheorem{lemma}[theo]{Lemma}
\newtheorem{coro}[theo]{Corollary}
\newtheorem{prop}[theo]{Proposition}
\newtheorem{assump}{Assumption A-\hspace{-0.15cm}}
\newcommand{\R}{\mathbb{R}}
\newcommand{\C}{\mathbb{C}}
\newcommand{\E}{\mathbb{E}}
\newcommand{\bdm}{\begin{displaymath}}
\newcommand{\edm}{\end{displaymath}}
\newcommand{\bea}{\begin{eqnarray*}}
\newcommand{\eea}{\end{eqnarray*}}
\newcommand{\Cplus}{\mathbb{C}^+}
\newcommand{\Rplus}{\mathbb{R}^+}
\newcommand{\im}{\mathrm{Im}}
\newcommand{\bs}{\boldsymbol}
\newcommand{\ii}{\mathbf{i}}
\numberwithin{equation}{section}
\theoremstyle{remark}
\newtheorem{rem}{Remark}[section]
\newcommand{\dzz}{\boldsymbol{\delta}_z}
\newcommand{\tr}{\mathrm{Tr}\,}
\newcommand{\cvgP}[1]{\xrightarrow[#1]{\mathcal P}}
\newcommand{\cvgD}{\xrightarrow[]{\mathcal D}}
\DeclareMathOperator*{\Imm}{Im}
\DeclareMathOperator*{\Real}{Re}
\DeclareMathOperator*{\dist}{dist}
\begin{document}
\bibliographystyle{plain}
\begin{abstract}
  Consider a $N\times n$ non-centered matrix $\Sigma_n$ with a
  separable variance profile:
 $$
\Sigma_n= \frac{D_n^{1/2} X_n \tilde D_n^{1/2}}{\sqrt{n}} + A_n\ .
$$ 
Matrices $D_n$ and $\tilde D_n$ are non-negative deterministic
diagonal, while matrix $A_n$ is deterministic, and $X_n$ is a random
matrix with complex independent and identically distributed random
variables, each with mean zero and variance one. Denote by $Q_n(z)$
the resolvent associated to $\Sigma_n \Sigma_n^*$, i.e.
$$
Q_n(z)=\left( \Sigma_n \Sigma_n^* -zI_N\right)^{-1}\ .
$$
Given two sequences of deterministic vectors $(u_n)$ and $(v_n)$ with
bounded Euclidean norms, we study the limiting behavior of
the random bilinear form:
$$
u_n^* Q_n(z) v_n\ ,\qquad    \forall z\in \C - \R^+\ ,
$$ 
as the dimensions of matrix $\Sigma_n$ go to infinity at the same
pace. Such quantities arise in the study of functionals of $\Sigma_n
\Sigma_n^*$ which do not only depend on the eigenvalues of $\Sigma_n
\Sigma_n^*$, and are pivotal in the study of problems related to
non-centered Gram matrices such as central limit theorems,
individual entries of the resolvent, and eigenvalue separation. 
\end{abstract}

\maketitle
\noindent \textbf{AMS 2000 subject classification:} Primary 15A52, Secondary 15A18, 60F15. \\
\noindent \textbf{Key words and phrases:} Random Matrix, empirical distribution of the eigenvalues, Stieltjes Transform.\\

\section{Introduction} 
\subsection*{The model}
Consider a $N\times n$ random matrix $\Sigma_n=(\xi_{ij}^{n})$ given by:
\begin{equation}\label{eq:model-sigma}
\Sigma_n= \frac {D_n^{\frac 12} {X_n} \tilde
  D_n^{\frac 12}}{\sqrt{n}} + A_n  \stackrel{\triangle}= Y_n +A_n\ ,
\end{equation}
where $D_n$ and $\tilde D_n$ are respectively $N\times N$ and $n\times
n$ non-negative deterministic diagonal matrices.  The entries of
matrices $(X_n)$, $(X_{ij}^{n}\ ; i,j,n)$ are complex, independent and
identically distributed (i.i.d.) with mean 0 and variance 1, 
and $A_n=(a_{ij}^{n})$ is a
deterministic $N\times n$ matrix whose spectral norm is bounded in $n$.

The purpose of this article is to study bilinear forms based on the
resolvent $Q_n(z)$ of matrix $\Sigma_n \Sigma_n^*$, where $\Sigma_n^*$
stands for the hermitian adjoint of $\Sigma_n$:
$$
Q_n(z)=\left( \Sigma_n \Sigma_n^* -zI_N\right)^{-1}\ ,
$$
as the dimensions $N$ and $n$ grow to infinity at the same pace, that is:
\begin{equation}\label{eq:asymptotic}
  0\quad <\quad \liminf \frac Nn\quad  \le \quad \limsup \frac Nn \quad <\quad \infty\ ,
\end{equation}
a condition that will be referred to as $N,n\rightarrow \infty$ in the
sequel.

A lot of attention has been devoted to the study of quadratic forms
$y^* A y$, where $y=n^{-1/2} (X_1,\cdots X_n)^T$, the $X_i$'s being
i.i.d., and $A$ is a matrix independent from $y$. It is well-known, at
least since Marcenko and Pastur's seminal paper \cite[Lemma
1]{MarPas67} (see also \cite[Lemma 2.7]{BaiSil98}) that under fairly
general conditions, $y^* A y \sim_{\infty} n^{-1} \tr A$. 

Such a result is of constant use in the study of centered random
matrices, as it allows to describe the behavior of the Stieltjes
transform associated to the spectral measure (empirical distribution
of the eigenvalues) of the matrix under investigation, see for
instance \cite{Sil95}, \cite{SilBai95}, \cite{HLN06,HLN07}, etc. 
Indeed, the Stieltjes transform of the spectral measure writes: 
$$
f_n(z)= \frac 1N  \tr Q_n(z) = \frac 1N \sum_{i=1}^N [Q_n(z)]_{ii}(z)\ ,
$$
where the $[Q_n(z)]_{ii}$'s denote the diagonal elements of the resolvent.
Denote by $\tilde\eta_i$ the $i$th row of $\Sigma_n$ and by
$\Sigma_{n,i}$ matrix $\Sigma_n$ when row $\tilde\eta_i$ has been
removed, then the matrix inversion lemma yields the following
expression:
$$
[Q_n(z)]_{ii} = -\frac 1{z\left(1+\tilde\eta_i 
(\Sigma_{n,i}^* \Sigma_{n,i} -zI)^{-1}\tilde\eta_i^* \right)}\ .
$$
In the case where $\Sigma_n= n^{-1/2} X_n$, the quadratic form that
appears in the previous expression can be handled by the 
aforementioned results. However, if $\Sigma_n$ is non-centered and given by
\eqref{eq:model-sigma}, then the quadratic form writes:
$$
\tilde\eta_i \tilde Q_i(z)\tilde\eta_i^* = \tilde y_i\tilde Q_i(z)\tilde y_i^* 
+ \tilde a_i\tilde Q_i(z)\tilde y_i^*
+ \tilde y_i\tilde Q_i(z)\tilde a_i^*
+ \tilde a_i\tilde Q_i(z)\tilde a_i^* \ ,
$$
where $\tilde Q_i(z)= (\Sigma_{n,i}^* \Sigma_{n,i} -zI)^{-1}$, and 
$\tilde y_i$ and $\tilde a_i$ are the $i$th rows of matrices $Y_n$ and $A_n$. 
The first term can be 
handled as in the centered case, the second and third terms go 
to zero; however, the fourth term involves a quadratic form  
$\tilde a_i\tilde Q_i(z)\tilde a_i^*$ based on deterministic vectors.

It is of interest to notice that, due to some fortunate cancellation,
the particular study of bilinear forms of the type $u_n^* Q_n(z)v_n$
or their analogues of the type $\tilde u_n \tilde Q_n(z) \tilde v_n^*$
can be circumvented to establish first order results for non-centered
random matrices (see for instance \cite{DozSil07a}, \cite{HLN07}).
However, such a study has to be addressed for finer questions such as:
Asymptotic behavior of individual entries of the resolvent (see for
instance \cite[Eq. (2.16)]{ERD10pre} where such properties are established in
the centered Wigner case to describe fine properties of the spectrum)
, Central Limit Theorems \cite{kam-kha-hac-naj-elk-spawc10,HKNS11pre}, behavior
of the extreme eigenvalues of $\Sigma_n \Sigma_n^*$, behavior of the
eigenvalues and eigenvectors associated with finite rank perturbations
of $\Sigma_n \Sigma_n^*$ \cite{ben-rao-arxiv09}, behavior of
eigenvectors or projectors on eigenspaces of $Q(z)$ (see for instance
\cite{BMP07} in the context of sample covariance (centered) model),
etc.

In a more applied setting, functionals based on individual entries of
the resolvent \cite{artigue-loubaton-2011} naturally arise in the
field of wireless communication (see for instance Section
\ref{sec:appli1}). Moreover, the asymptotic study of the quadratic
forms $u_n^* Q_n(z) u_n$ is important in statistical inference
problems.  In the non-correlated case (where $D_n=I_N$ and $\tilde
D_n=I_n$), it is proved in \cite{vallet2010sub} how such quadratic
forms yield consistent estimates of projectors on the subspace
orthogonal to the column space of $A_n$ in the Gaussian case (see also
Section \ref{sec:appli2}). Such projectors form the basis of MUSIC
algorithm, very popular in the field of antenna array processing. A
similar approach has been developed in \cite{Mes08-it},
\cite{Mes08-sp} for sample covariance matrix models.

It is the purpose of this article to provide a quantitative
description of the limiting behavior of the bilinear form $u_n^*
Q_n(z) v_n$, where $u_n$ and $v_n$ are deterministic, as the
dimensions of $\Sigma_n$ go to infinity as indicated in
\eqref{eq:asymptotic}.

\subsection*{Assumptions, fundamental equations, deterministic equivalents} 
Formal
assumptions for the model are stated below, where $\|\cdot\|$ either
denotes the Euclidean norm of a vector or the spectral norm of a
matrix.
\begin{assump}
\label{ass:X}
The random variables $(X_{ij}^n\ ;\ 1\le i\le N,\,1\le j\le n\,,\, n\ge1)$ 
are complex, independent and identically
distributed. They satisfy $\E X_{ij}^n = 0$ and $\E|X_{ij}^n|^2=1$. 
\end{assump}

\begin{assump}
\label{ass:A} The family of deterministic $N\times n$ matrices $(A_n,n\ge 1)$ 
is bounded for the spectral norm as $N,n\rightarrow\infty$:
$$
\boldsymbol{a_{\max}}= \sup_{n\ge 1} \|A_n\| <\infty\ .
$$
\end{assump}
Notice that this assumption implies in particular that the Euclidean norm 
of any row or column of $\|A_n\|$ is uniformly bounded in $N,n$. 

\begin{assump}\label{ass:D} The families of real deterministic $N\times N$ and $n\times n$ 
  matrices $(D_n)$ and $(\tilde D_n)$ are diagonal with non-negative
  diagonal elements, and are bounded for the spectral norm as
  $N,n\rightarrow\infty$:
$$
\boldsymbol{d_{\max}}= \sup_{n\ge 1} \|D_n\| <\infty\quad\textrm{and}\quad
\boldsymbol{\tilde d_{\max}}= \sup_{n\ge 1} \|\tilde D_n\| <\infty\ .
$$  
Moreover, 
$$
\boldsymbol{d_{\min}} = \inf_{N} \frac 1N \tr D_n >0\quad \textrm{and}\quad 
\boldsymbol{\tilde d_{\min}} = \inf_{n} \frac 1n \tr \tilde D_n >0\ .
$$
\end{assump}

We collect here results from \cite{HLN07}. 

The following system of equations:
\begin{equation}\label{eq:fundamental}
\left\{
  \begin{array}{ccc}
\delta(z) & = & \frac 1n \mathrm{Tr}\, D_n
\left( -z(I_N+\tilde \delta(z) D_n )I_N + A_n\left( I_n + \delta(z) \tilde D_n\right)^{-1} A_n^*
\right)^{-1}
\\
\tilde \delta(z) & = & \frac 1n \mathrm{Tr}\, \tilde D_n
\left( -z(I_n +\delta(z)\tilde D_n) + A^*_n\left( I_N + \tilde \delta(z) D_n\right)^{-1}  A_n
\right)^{-1}
\end{array}
\right. ,\quad z\in \mathbb{C}- \mathbb{R}^+
\end{equation}
admits a unique solution $(\delta, \tilde \delta)$ in the class of Stieltjes
transforms of nonnegative measures\footnote{In fact, $\delta$ and $\tilde \delta$ are the
  Stieltjes transforms of measures with respective total mass $n^{-1}\tr D_n$
and $n^{-1}\tr \tilde D_n$.} with support in $\mathbb{R}^+$. Matrices $T_n(z)$ and $\tilde T_n(z)$ defined by 
\begin{equation}\label{eq:def-T}
\left\{
\begin{array}{ccc}
T_n(z) & = & 
\left( -z(I_N+\tilde \delta(z) D_n ) + A_n\left( I_n + \delta(z) \tilde D_n\right)^{-1} A_n^*
\right)^{-1}
\\
\tilde T_n(z) & = & 
\left( -z(I_n +\delta(z)\tilde D_n) + A^*_n\left( I_N + \tilde \delta(z) D_n\right)^{-1}  A_n
\right)^{-1}
\end{array}
\right. 
\end{equation}
are approximations of the resolvent $Q_n(z)$
and the co-resolvent $\tilde Q_n(z) = (\Sigma_n^* \Sigma_n -zI_N)^{-1}$ in the
sense that ($\xrightarrow[]{a.s.}$ stands for the almost sure
convergence):
$$
\frac 1N \mathrm{Tr}\, \left( Q_n(z) -T_n(z)\right) \xrightarrow[N,n\rightarrow \infty]{a.s.} 0\ ,
$$
which readily gives a deterministic approximation of the Stieltjes
transform $N^{-1} \mathrm{Tr}\, Q_n(z)$ of the spectral measure of
$\Sigma_n \Sigma_n^*$ in terms of $T_n$ (and similarly for $\tilde
Q_n$ and $\tilde T_n$). Matrices $T_n$ and $\tilde T_n$ will play a
fundamental role in the sequel.

\subsection*{Nice constants and nice polynomials} By nice
constants, we mean positive constants which depend upon the limiting quantities
$\boldsymbol{d_{\min}}$, $\boldsymbol{\tilde d_{\min}}$,
$\boldsymbol{d_{\max}}$, $\boldsymbol{\tilde d_{\max}}$,
$\boldsymbol{a_{\max}}$, $\liminf \frac Nn$ and $\limsup \frac Nn$ but
are independent from $n$ and $N$.  Nice polynomials are polynomials
with fixed degree (which is a nice constant) and
with non-negative coefficients, each of them being a nice constant.
Further dependencies are indicated if needed.

\subsection*{Statement of the main result} 
Let $\dzz$ be the distance between the point $z\in \C$ 
and the real nonnegative axis $\R^+$:
\begin{equation}\label{def:dzz}
\dzz \ =\ \dist(z,\R^+) \ .
\end{equation}  
Here is the main result of the paper:

\begin{theo}
\label{th:main} 
  Assume that $N,n\rightarrow \infty$ and that assumptions
  A-\ref{ass:X}, A-\ref{ass:A} and A-\ref{ass:D} hold true. Assume
  moreover that there exists an integer $p\ge 1$ such that
  $\sup_n \mathbb{E} |X_{ij}^n|^{8p}<\infty$ and let $(u_n)$ and $(v_n)$ be
  sequences of $N\times 1$ deterministic vectors. Then, for every $z\in
  \C-\R^+$,
\begin{equation}
\label{ineq-main-th} 
  \E \left| u_n^* \left( Q_n(z) -T_n(z)\right) v_n\right|^{2p} \le
\frac{1}{n^p} \Phi_p(|z|)\Psi_p\left( \frac 1{\dzz}\right)\, \|u_n\|^{2p} \|v_n\|^{2p} , 
\end{equation} 
where $\Phi_p$ and $\Psi_p$ are nice polynomials depending on $p$ but 
not on $(u_n)$ neither on $(v_n)$.
\end{theo}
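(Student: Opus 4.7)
The plan is to decompose
$$
Q_n(z) - T_n(z) = \bigl(Q_n(z) - \E Q_n(z)\bigr) + \bigl(\E Q_n(z) - T_n(z)\bigr),
$$
bound each contribution to $u_n^*(Q_n-T_n)v_n$ separately in $L^{2p}$, and conclude by a $(a+b)^{2p}$ convexity step. The fluctuation piece is handled by a martingale/Burkholder argument, and the bias piece by exploiting the master equation \eqref{eq:fundamental} together with an integration-by-parts identity tailored to i.i.d.\ entries with bounded moments. All along, one uses the a-priori bound $\|Q_n(z)\|\le 1/\dzz$, a companion bound $\|T_n(z)\|\le C/\dzz$ with $C$ a nice constant (which follows from the Stieltjes-transform structure of $\delta,\tilde\delta$ and from Assumption A-\ref{ass:D}), and polynomial controls in $|z|$ arising when inverting $T_n^{-1}$.

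For the fluctuation part, let $\mathcal{F}_k$ be the $\sigma$-algebra generated by the first $k$ columns of $X_n$, let $\E_k=\E[\,\cdot\mid\mathcal{F}_k]$, and write $u_n^*(Q_n-\E Q_n)v_n=\sum_{k=1}^n\Delta_k$ with $\Delta_k=(\E_k-\E_{k-1})u_n^*Q_n v_n$. Setting $Q_n^{(k)}$ to be the resolvent of $\Sigma_n^{(k)}(\Sigma_n^{(k)})^*$, where $\Sigma_n^{(k)}$ coincides with $\Sigma_n$ except that its $k$-th column is replaced by that of $A_n$, one uses the Sherman--Morrison (Woodbury) formula to expand $Q_n-Q_n^{(k)}$ as a finite combination of rank-one perturbations driven by the $k$-th column of $Y_n$; in particular $\Delta_k=(\E_k-\E_{k-1})u_n^*(Q_n-Q_n^{(k)})v_n$. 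The Burkholder inequality yields
$$
\E\bigl|u_n^*(Q_n-\E Q_n)v_n\bigr|^{2p}\ \le\ C_p\,n^{p-1}\sum_{k=1}^n\E|\Delta_k|^{2p},
$$
and each increment is of order $n^{-1}\|u_n\|\|v_n\|$ in $L^{2p}$: the Sherman--Morrison correction carries a factor $n^{-1/2}$ for each random direction, and the resulting bilinear/quadratic forms in the $k$-th column are controlled in $L^{2p}$ by the moment hypothesis $\sup_n\E|X_{ij}^n|^{8p}<\infty$ (which is calibrated so that products of up to four factors of the entries have finite $2p$-th moment). Summation then gives the required $n^{-p}$ factor, with the $z$-dependence encoded in powers of $|z|$ and $1/\dzz$ brought in by repeated use of $\|Q_n\|,\|Q_n^{(k)}\|\le 1/\dzz$.

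For the bias part, one writes
$$
\E Q_n - T_n\ =\ \E\bigl[T_n(T_n^{-1}-Q_n^{-1})Q_n\bigr]
$$
and expands $T_n^{-1}-Q_n^{-1}$ using \eqref{eq:def-T} and the identity $\Sigma_n\Sigma_n^*=Y_nY_n^*+Y_nA_n^*+A_nY_n^*+A_nA_n^*$. The resulting expectations involve $\E[Y_n Q_n \cdot]$-type quantities that are evaluated via a Stein/integration-by-parts formula applied entrywise to the $X_{ij}^n$ (in the spirit of Pastur--Shcherbina and \cite{HLN07}). This yields a closed system for the sandwich $u_n^*(\E Q_n - T_n)v_n$ of the form (sandwich)$=$(remainder), where the remainder consists of centred bilinear and quadratic forms based on deterministic matrices whose expected modulus is $O(n^{-1})\|u_n\|\|v_n\|$, with a polynomial dependence on $|z|$ and $1/\dzz$; a one-step bootstrap (using the already-established rough bound $\E Q_n - T_n=o(1)$ from \cite{HLN07}) closes the iteration and gives $|u_n^*(\E Q_n-T_n)v_n|\le n^{-1}\Phi(|z|)\Psi(1/\dzz)\|u_n\|\|v_n\|$; raising to the $2p$-th power yields the required bound.

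The main obstacle is the careful tracking of the $z$-dependence: every use of Burkholder, Sherman--Morrison and the Stein formula produces additional factors of $|z|$ and $1/\dzz$, and one must ensure that the total remains a nice polynomial. This forces repeated appeals to the Stieltjes-transform bounds $|\delta(z)|, |\tilde\delta(z)|\le C/\dzz$ and to the crucial fact that $\Imm\bigl(z(1+\tilde\delta(z)d_i)\bigr)$ retains the correct sign, so that factors such as $(1+\delta\tilde d_j)^{-1}$ appearing inside $T_n^{-1}$ remain uniformly bounded in a way compatible with nice polynomials in $1/\dzz$. A secondary difficulty is the high moment requirement: the Burkholder step combined with the rank-one expansion of $\Delta_k$ produces products of up to four entries $X_{ij}^n$ inside an $L^{2p}$ norm, which is exactly why the assumption $\sup_n\E|X_{ij}^n|^{8p}<\infty$ is imposed rather than the apparently weaker $\E|X|^{2p}<\infty$.
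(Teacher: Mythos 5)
Your two-term split $Q-\E Q$ plus $\E Q-T$ is legitimate in principle, but both halves of your argument have genuine gaps. For the fluctuation term, the claim that each martingale increment is of order $n^{-1}\|u\|\|v\|$ in $L^{2p}$ is false precisely because the model is non-centered: writing $\eta_j=y_j+a_j$, the increment $(\E_j-\E_{j-1})\,u^*Q_j\eta_j\eta_j^*Q_jv/(1+\eta_j^*Q_j\eta_j)$ contains cross terms such as $y_j^*Q_ju\cdot u^*Q_ja_j$ and terms $u^*Q_ja_ja_j^*Q_jv\cdot\Delta_j$, which carry only one random direction and hence only one factor $n^{-1/2}$ (the deterministic column $a_j$ has norm of order one). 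Thus $\E|\Delta_k|^{2p}=O(n^{-p})$, not $O(n^{-2p})$, and your lossy Burkholder bound $C_pn^{p-1}\sum_k\E|\Delta_k|^{2p}$ only yields $O(1)$. To reach $n^{-p}$ you must keep the predictable quadratic-variation term in Burkholder \emph{and} show that sums of the type $\sum_{j=1}^n\E_{j-1}\left(u^*Q_ja_ja_j^*Q_j^*u\right)$ are bounded in $L^p$ by a nice polynomial in $|z|,1/\dzz$ uniformly in $n$; the termwise bound $O(\dzz^{-2})$ summed over $j$ loses a factor $n$ and ruins the estimate. This boundedness uses $\sum_j a_ja_j^*=AA^*$ with $\|A\|$ bounded, transferred from $Q$ to the $Q_j$'s through the rank-one identities: it is exactly Lemma \ref{lemma:QiaaQi} (the Girko-type cornerstone of the paper, feeding Proposition \ref{prop:big-prop}), and nothing in your outline supplies it. Incidentally, this is also where the $8p$-moment hypothesis actually enters (the $\chi_{2j}$ term in the $\Gamma_{2j}$ estimates), not through "products of four entries inside an $L^{2p}$ norm".

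For the bias term, your route is not available as stated. A Stein/integration-by-parts identity applied entrywise to the $X_{ij}^n$ requires Gaussian (or at least smooth) entries, whereas only moment assumptions are made here; and the "bootstrap from the rough bound $\E Q_n-T_n=o(1)$ of \cite{HLN07}" has no valid input: that reference controls the normalized trace $\frac1N\tr(Q-T)$, not bilinear forms, and provides neither a rate nor the polynomial control in $1/\dzz$ that \eqref{ineq-main-th} demands for every $z\in\C-\R^+$. The paper's proof instead inserts the intermediate matrix $R$ built from $\alpha=n^{-1}\tr D\,\E Q$, proves $|u^*(\E Q-R)v|=O(n^{-1/2})$ (which suffices after raising to the power $2p$) by a further decomposition into the $Z_{1j},\dots,Z_{4j}$ terms that again leans on Lemma \ref{lemma:QiaaQi} (Proposition \ref{prop:intermediate2}), and then compares $R$ with $T$ by solving a $2\times2$ system in $\alpha-\delta,\tilde\alpha-\tilde\delta$, lower-bounding $\det(I-C_0)$ via Propositions \ref{prop:lower-estimate-det} and \ref{prop:auxiliary-det}, and invoking a Schwarz-lemma/Haagerup--Thorbjornsen argument to keep nice-polynomial control near the real axis (Proposition \ref{prop:intermediate3} and Appendix \ref{app:sec:proof3}). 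Your proposal neither carries out these steps nor offers workable substitutes, so the claimed $O(n^{-1})$ bound on $u^*(\E Q-T)v$ with the required uniformity in $z$ is unsupported.
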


\begin{rem}
  Apart from providing the convergence speed ${\mathcal O}(n^{-p})$,
  inequality \eqref{ineq-main-th} provides a fine control of
  the behavior of $\E| u^*(Q-T)v |^{2p}$ when $z$ is near the real
  axis. Such a control should be helpful for studying the behavior of
  the extreme eigenvalues of $\Sigma_n \Sigma_n^*$ along the lines of
  \cite{BaiSil98} and \cite{bai-sil-exact99}.
\end{rem}

\begin{rem} {\em Influence of the eigenvectors of $AA^*$ on the
    limiting behavior of $u^* Q u$.} Consider a matrix $\Sigma$ with
  no variance profile ($D=I_N,\ \tilde D= I_n$) and let $T$ be given by \eqref{eq:def-T}. Matrix $T$ writes
  in this case:
$$
T=\left( -z(1+\tilde \delta)I +\frac{AA^*}{1+\delta} \right)^{-1}\ .
$$
Denote by $V \Delta V^*$ the spectral decomposition of $A A^*$, and by $T_{\Delta}$:
$$
T_{\Delta} =\left( -z(1+\tilde \delta)I +\frac{\Delta}{1+\delta} \right)^{-1}\ .
$$
Obviously, $T= V T_{\Delta} V^*$ and by Theorem \ref{th:main}, $u^* Q
u - u^* VT_{\Delta}V^* u\rightarrow 0\ .$ Clearly, the limiting
behavior of $u^* Q u$ not only depends on the spectrum (matrix
$\Delta$) of $AA^*$ but also on its eigenvectors (matrix $V$).
\end{rem}

\subsection*{Contents}
In Section \ref{sec:applications}, we describe two important
motivations from electrical engineering. In Section \ref{sec:misc}, we
set up the notations, state intermediate results among which Lemma
\ref{lemma:QiaaQi}, which is the cornerstone of the paper. Loosely
speaking, this lemma whose idea can be found in the work of Girko
\cite{gir-sara} states that quantities such as
$$
\sum_{i=1}^n u^* Q_i a_i a_i^* Q_i u
$$ 
are bounded. This control turns out to be central to take
into account Assumption A-\ref{ass:A}. 
An intermediate deterministic matrix $R_n$ is introduced and the proof of Theorem \ref{th:main}
is outlined. Basically, the quantity of interest $u^* (Q-T)v$ is split into three parts:
$$
u^* (Q-T)v = u^* (Q-\E Q)v +u^* (\E Q-R)v +u^* (R-T)v\ , 
$$
each being studied separately.

In Section \ref{sec:proof1}, the proof of estimate of $u^* (Q-\E Q)v$
is established, based on a decomposition of $Q-\E Q$ as a sum of
martingale increments. Section \ref{sec:proof2} is devoted to the
proof of estimate of $u^* (\E Q-R)v$; and Section \ref{sec:proof3}, to
the proof of estimate of $u^* (R-T)v$.

\subsection*{Acknowledgment}
This work was partially supported by the Agence Nationale de la
Recherche (France), project SESAME n$^\circ$ANR-07-MDCO-012-01.

\section{Two applications to electrical engineering}\label{sec:applications}

Apart from the technical motivations already mentionned in the
introduction, Theorem \ref{th:main} has further applications in
electrical engineering. In this section, we present an application to
Multiple Input Multiple Output (MIMO) wireless communication systems,
and an application to statistical signal processing.

\subsection{Optimal precoder in MIMO systems}\label{sec:appli1}
A bi-correlated MIMO wireless Ricean channel is a $N \times n$ random matrix $H_n$ given by
$$
H_n = B_n + R_n^{1/2} \frac{V_n}{\sqrt{n}} \tilde{R}_n^{1/2}\ ,
$$
where $B_n$ is a deterministic matrix, $V_n$ is a standard complex
Gaussian matrix, and where $R_n$ and $\tilde{R}_n$ represent
deterministic positive $N \times N$ and $n \times n$ matrices. An
important related question is the determination of a precoder
maximizing the so-called capacity after mininum mean square error
detection (for more details on the application context, see
\cite{artigue-loubaton-2011}). Mathematically, this problem is
equivalent to the evaluation of a deterministic $N \times N$ matrix
$K_n$ maximizing the function ${\mathcal I}_{mmse}(K_n)$ defined on
the set of all complex valued $N \times N$ matrices by
\begin{equation}
  \label{eq:def-Cmmse}
  {\mathcal I}_{mmse}(K_n) = \mathbb{E} \sum_{j=1}^{N} \left[ \log \left( I + K_n H_n H_n^{*} K_n^* \right)^{-1}_{j,j} \right]
\end{equation}
under the constraint $\frac{1}{N} \mathrm{Tr}(K_n K_n^*) \leq a$ ($a >
0$). This optimization problem has no closed form solution and one must
rely on numerical computations. However, direct numerical attempts
such as optimization by steepest descent algorithms or Monte-Carlo
simulations to evaluate ${\mathcal I}_{mmse}(K_n)$ before optimization,
or any combination of these techniques, face major difficulties, among
which: Hardly tractable expressions for ${\mathcal I}_{mmse}(K_n)$,
and for its first and second derivatives, computationally intensive
algorithms when relying on Monte-Carlo simulations.

If $N$ and $n$ are large enough, an alternative approach consists in
deriving a large system approximation $\overline{{\mathcal
    I}}_{mmse}(K_n)$ of ${\mathcal I}_{mmse}(K_n)$ which, hopefully,
is simpler to optimize w.r.t. $K_n$. This idea has been successfully
developed in \cite{artigue-loubaton-2011}, in the case where $B_n=0$,
and in \cite{dumont2009capacity} in a slightly different context,
where the functional under consideration is the Shannon capacity
${\mathcal I}_{s}(K_n) = \mathbb{E} \log \mathrm{det}\left( I + K_n
  H_n H_n^{*} K_n^* \right)$.

In the remainder of this section, we consider the case where $B_n\neq
0$ and briefly indicate how Theorem \ref{th:main} comes into
play. First remark that for every deterministic matrix $K_n$, the
random matrix $K_n H_n$ writes: 
$$
K_n H_n = K_n B_n + (K_n R_n K_n^*)^{1/2} \frac{W_n}{\sqrt{n}} \tilde{R}_n^{1/2}
$$ 
where $W_n$ is standard Gaussian random matrix (notice that $(K_n R_n
K_n^*)^{-1/2} K_n R_n^{1/2}$ is unitary).


Using the eigenvalue/eigenvector decomposition of matrices $ K_n R_n
K_n^*$ and $\tilde{R}_n$, the unitary invariance of the canonical
equations \eqref{eq:fundamental}, and Theorem \ref{th:main}, one can
easily check that the diagonal entries of $(I + K_n H_n H_n^*
K_n^*)^{-1}$ have the same asymptotic behaviour (when $(n,N)
\rightarrow \infty$) as those of the deterministic matrix $T_n(K_n)$
defined by:
$$
T_n(K_n) = \left[ (I + \tilde{\delta}(K_n) K_n R_n K_n^*) + K_n B_n (I
  + \delta(K_n) \tilde{R}_n)^{-1} B_n^* K_n^* \right]^{-1}\ ,
$$
where $\delta(K_n)$ and $\tilde{\delta}(K_n)$ are the (unique) positive solutions of the system: 
\begin{equation}\label{eq:delta-appli}
\left\{
  \begin{array}{ccc}
\delta(K_n) & = & \frac{1}{n} \mathrm{Tr} K_n R_n K_n^*  \left[ (I + \tilde{\delta}(K_n) K_n R_n K_n^*) + K_n B_n (I + \delta(K_n) \tilde{R}_n)^{-1} B_n^* K_n^* \right]^{-1} \\
\tilde{\delta}(K_n) & = & \frac{1}{n} \mathrm{Tr} \tilde{R}_n  \left[ (I + \delta(K_n) \tilde{R}_n) + B_n^* K_n^*  (I + \tilde{\delta}(K_n)  K_n R_n K_n^*)^{-1} K_n B_n \right]^{-1}
\end{array}\right.\ .
\end{equation}
From this, it appears that ${\mathcal I}_{mmse}(K_n)$ can be
approximated by $\overline{{\mathcal I}}_{mmse}(K_n)$ given by:
$$
\overline{{\mathcal I}}_{mmse}(K_n) = \sum_{j=1}^{N} \log \left[ (I + \tilde{\delta}(K_n) K_n R_n K_n^*) + K_n B_n (I + \delta(K_n) \tilde{R}_n)^{-1} B_n^* K_n^* \right]_{j,j}^{-1}
$$
Although the values taken by function $K_n \rightarrow \overline{{\mathcal
    I}}_{mmse}(K_n)$ are defined through the implicit equations
\eqref{eq:delta-appli}, the first and second
derivatives of $\overline{{\mathcal I}}_{mmse}$ are easy to compute, and
the minimization of $\overline{{\mathcal I}}_{mmse}$ instead of ${\mathcal
  I}_{mmse}$ certainly leads to a computationally attractive
algorithm.  

A number of important related questions remain to be addressed,
e.g. the accuracy of the approximation $\overline{{\mathcal
    I}}_{mmse}(K_n)$, its impact on the error on the optimum solution,
the derivation of a more accurate approximation as in
\cite{artigue-loubaton-2011}, the development of an efficient
algorithm to compute the optimal $K_n^*$, etc.; however this already
underlines promising applications of Theorem \ref{th:main} in the
context of wireless communication.

\subsection{Statistical signal processing applications}\label{sec:appli2}

There are many important applications such as source localization
using antenna arrays, communication channel estimation, detection of
signals corrupted by additive noise, etc. where the observations are
stacked into a matrix $\Sigma_n$ given by \eqref{eq:model-sigma} in
which $A_n$ is a non observable deterministic matrix modelling the
information to be retrieved and where $Y_n$ is due to an additive
noise. It is therefore often relevant to estimate certain functionals
of matrix $A_n$ from $\Sigma_n$. In this section, we show how Theorem
\ref{th:main} is valuable and relevant in the context of subspace
estimators when $N$ and $n$ are of the same order of magnitude.

\subsubsection*{Subspace estimation} Assume that $\frac{N}{n} < 1$,
$D_n=I_N$ and $\tilde{D}_n=I_n$ (white noise); assume also that matrix
$\mathrm{Rank}(A_n) = r < N$ where $r$ may scale or not with the
dimensions $n$ and $N$.
Denote by $\Pi_n$ the orthogonal projection on the kernel of matrix
$A_n$. The subspace estimation methods are based on the estimation
of quadratic forms $u_n^* \Pi_n u_n$ where $(u_n)_{n \in \mathbb{N}}$
represents a sequence of unit norm deterministic $N$--dimensional
vectors.

If $N$ if fixed while $n \rightarrow +\infty$, it is well known that
$\| \Sigma_n \Sigma_n^* - (A_n A_n^* + I) \| \rightarrow 0$. Hence, if
$\check{\Pi}_n$ represents the orthogonal projection matrix on the
eigenspace associated to the $N-r$ smallest eigenvalues of $\Sigma_n
\Sigma_n^*$, then $\| \check{\Pi}_n - \Pi_n \| \rightarrow 0$ and thus 
\begin{equation}\label{eq:finite-N}
u_n^* \check{\Pi}_n u_n - u_n^* \Pi_n u_n \xrightarrow[n\rightarrow \infty,\, N\, \textrm{fixed}]{} 0\ .
\end{equation}
In order to model situations in which $n$ and $N$ are large
and of the same order of magnitude, it is relevant to look for
estimators consistent in the regime given by \eqref{eq:asymptotic}.
Unfortunately, \eqref{eq:finite-N} is no longer valid in this context.

\subsubsection*{An estimator for large $N,n$} The starting point
of the estimator proposed in \cite{vallet2010sub}, inspired by \cite{MES08}, is based on the
observation that $\Pi_n$ writes:
$$ 
\Pi_n = \frac{1}{2 i \pi} \int_{{\mathcal C}^{-}} \left( A_n A_n^* - \lambda I \right)^{-1} \, d\lambda\ ,
$$
where ${\mathcal C}^{-}$ is a clockwise oriented contour enclosing $0$
but not the non-zero eigenvalues of $A_n A_n^{*}$. In the white noise case, matrix $T_n(z)$ 
writes:
$$
T_n(z) = (1 +  \delta_n(z)) \left( A_n A_n^* - w_n(z) I \right)^{-1}  \ ,
$$
where $w_n(z)$ is the function defined by $w_n(z) = z( 1 +
\delta_n(z))( 1 + \tilde{\delta}_n(z))$. It is shown in
\cite{vallet2010sub} that (under additional assumptions) such a contour ${\mathcal C}^-$ 
is the image under $w_n$ of the boundary $\partial {\mathcal R}_y$ of the rectangle 
$
{\mathcal R}_y = \{ z = x + iv, x \in [x_{-}, x_{+}], |v| \leq y \}
$
for well-chosen $x_-$ and $x_+$. A simple change of variable argument therefore yields 
the following formula for $\Pi_n$:
$$
\Pi_n =  \frac{1}{2 i \pi} \int_{\partial {\mathcal R}_y^{-}} \left(  A_n A_n^* - w_n(z) I \right)^{-1}  w_n^{'}(z) dz = 
\frac{1}{2 i \pi} \int_{{\partial \mathcal R}_y^{-}} T_n(z)  \frac{w_n^{'}(z)}{1 +  \delta_n(z)} dz\ .
$$
Hence, $u_n^* \Pi_n u_n$ is given by:
\begin{equation}
\label{eq:expre-Pi}
u_n^* \Pi_n u_n=  \frac{1}{2 i \pi} \int_{{\partial \mathcal R}_y^{-}} u_n^* T_n(z) 
u_n \frac{w_n^{'}(z)}{1 +  \delta_n(z)} dz\ .
\end{equation}

Eq. \eqref{eq:expre-Pi} is particularly interesting because all the
terms in the integrand admit consistent estimators: Quantities
$\delta_n(z)$ and $\tilde{\delta}_n(z)$ can be estimated by
$\hat{\delta}_n(z) = \frac{1}{n} \mathrm{Tr}(Q_n(z))$ and
$\hat{\tilde{\delta}}_n(z) = \frac{1}{n} \mathrm{Tr}(\tilde{Q}_n(z))$,
$w_n^{'}(z)$ can be estimated by the derivative of $\hat{w}_n(z) = z
(1 + \hat{\delta}_n(z))( 1 + \hat{\tilde{\delta}}_n(z))$; finally,
Theorem \ref{th:main} implies that $ u_n^* Q_n(z) u_n - u_n^* T_n(z)
u_n \rightarrow 0$ for $N,n\rightarrow\infty$. A reasonnable estimator
for $\Pi_n$ should therefore be
\begin{equation}
\label{eq:expre-tildePi}
\hat{\Pi}_n =  \frac{1}{2 i \pi} \int_{{\partial \mathcal R}_y^{-}}  Q_n(z)  \frac{\hat{w}_n^{'}(z)}{1 + \hat{\delta}_n(z)} dz
\end{equation}
and it should be expected that $u_n^*\hat{\Pi}_n u_n - u_n^* \Pi_n u_n \rightarrow 0$ for $N,n\rightarrow\infty$.

\subsubsection*{Remaining mathematical issues}
The full definition of $\hat{\Pi}_n$ requires to prove that none of
the poles of the integrand of the r.h.s. of
\eqref{eq:expre-tildePi} can be equal to $x_{-}$ or
$x_{+}$. Otherwise, the mere definition of $\hat{\Pi}_n$ does not make
sense. This problem has been solved in the Gaussian case in
\cite{vallet2010sub}. In the non Gaussian case, partial results concerning ``no
eigenvalue separation for the signal plus noise model''
\cite{bai-silverstein-2011} together with Theorem \ref{th:main} tend
to indicate that the estimator $u_n^*\hat{\Pi}_n u_n$ is also
consistent.
  

\section{Notations, preliminary results and sketch of proof}\label{sec:misc}
\subsection{Notations}
\label{sec:notation}
The indicator function of the set ${\mathcal A}$ will be denoted by
${\bf 1}_{\mathcal A}(x)$, its cardinality by $\#{\mathcal A}$. Denote
by $a\wedge b= \inf(a,b)$ and by $a\vee b=\sup(a,b)$.  As usual,
$\Rplus = \{ x \in \R \ : \ x \geq 0 \}$ and $\Cplus = \{ z \in \C \ :
\ \im(z) > 0 \}$; similarly $\C^-=\{z \in \C \ : \ \im(z) < 0 \}$;
$\mathbf{i}=\sqrt{-1}$; if $z\in \mathbb{C}$, then $\bar{z}$ stands
for its complex conjugate.  Denote by $\cvgP{ }$ the convergence in
probability of random variables and by $\cvgD$ the convergence in
distribution of probability measures.  Denote by
$\mathrm{diag}(a_i;\,1\le i\le k)$ the $k\times k$ diagonal matrix
whose diagonal entries are the $a_i$'s.  Element $(i,j)$ of matrix $M$
will be either denoted $m_{ij}$ or $[M]_{ij}$ depending on the
notational context. if $M$ is a $n\times n$ square matrix,
$\mathrm{diag}(M)=\mathrm{diag}(m_{ii}; 1\le i\le n)$. Denote by $M^T$
the matrix transpose of $M$, by $M^*$ its Hermitian adjoint, by $\tr
(M)$ its trace and $\det(M)$ its determinant (if $M$ is square).  We
shall use Landau's notation: By $a_n={\mathcal O}(b_n)$, it is meant
that there exists a nice constant $K$ such that $|a_n|\le K|b_n|$ as
$N,n\rightarrow \infty$. Recall that when dealing with vectors,
$\|\cdot\|$ will refer to the Euclidean norm; in the case of matrices,
$\|\cdot\|$ will refer to the spectral norm.

Due to condition \eqref{eq:asymptotic}, we can assume (without loss of
generality) that there exist $0<\boldsymbol{\ell^-}\le
\boldsymbol{\ell^+}<\infty$ such that 
$$
\forall N,n\in \mathbb{N}^*,\qquad \boldsymbol{\ell^-}\quad \le \quad \frac Nn 
\quad \le\quad 
\boldsymbol{\ell^+}\ .
$$

We may drop occasionally subscripts and superscripts $n$ for readability.

Denote by $Y$ the $N\times n$ matrix $n^{-1/2} D^{1/2} X\tilde D^{1/2}
$; by $(\eta_j)$, $(a_j)$, $(x_j)$ and $(y_j)$ the columns of matrices
$\Sigma$, $A$, $X$ and $Y$. Denote by $\Sigma_j$, $A_j$ and $Y_j$, the
matrices $\Sigma$, $A$ and $Y$ where column $j$ has been removed. The
associated resolvent is $Q_j(z)=(\Sigma_j \Sigma_j^*
-zI_N)^{-1}$. Denote by $\mathbb{E}_j$ the conditional expectation
with respect to the $\sigma$-field ${\mathcal F}_j$ generated by the
vectors $(y_\ell,\, 1\le \ell\le j)$.  By convention,
$\mathbb{E}_0=\mathbb{E}$. Denote by $\mathbb{E}_{y_j}$ the
conditional expectation with respect to the $\sigma$-field generated
by the vectors $(y_\ell,\, \ell\neq j)$.

\subsection{Classical and useful results}

We remind here classical identities of constant use in the sequel. The
first one expresses the diagonal elements of the co-resolvent; the
other ones are based on low-rank perturbations of inverses (see for
instance \cite[Sec. 0.7.4]{HorJoh94}).
\subsubsection*{Diagonal elements of the co-resolvent; rank-one perturbation of the resolvent}
\begin{eqnarray}
  \tilde q_{jj}(z) &=& -\frac 1{z(1+\eta_j^* Q_j(z) \eta_j)}\ , \label{eq:diag-coresolvent}\\
  Q(z) &=& Q_j(z) -\frac{Q_j(z) \eta_j \eta_j^* Q_j(z)}{1+\eta_j^* Q_j \eta_j}\ ,\label{eq:woodbury}\\
  Q_j(z) &=& Q(z) + \frac{Q(z) \eta_j \eta_j^* Q(z)}{1- \eta_j^* Q \eta_j}\ ,\label{eq:woodbury2}\\
  1+\eta_j^* Q_j \eta_j &=& \frac 1{1-\eta_j^* Q \eta_j}\ .\label{eq:woodbury3}
\end{eqnarray}
A useful consequence of \eqref{eq:woodbury} is:
\begin{equation}
\eta_j^* Q(z) = \frac{\eta_j^* Q_j(z)}{1+\eta_j^* Q_j(z) \eta_j} = -z \tilde q_{jj}(z) \eta_j^* Q_j(z)\ .
\label{eq:woodbury4}
\end{equation}
Recall that $\dzz = \dist(z, \R^+)$. 
Considering the eigenvalues of $Q(z)$ immediately yields
$
\|Q(z)\|  \le \dzz^{-1}\ .
$
Taking into account the fact that 
$$
- \frac 1{z(1+n^{-1} {\tilde d_j} \tr Q_j
+a_j^* Q_j a_j)}\quad \textrm{and}\quad -\frac 1{z(1+\eta_j^* Q_j \eta_j)}
$$ are
Stieltjes transforms of probability measures over $\R^+$, and based on
standard properties of Stieltjes transforms (see for instance
\cite[Proposition 2.2]{HLN07}), we readily obtain the following
estimates:
\begin{equation}\label{eq:property-ST}
\frac 1{\left| 1+\frac{ {\tilde d}_j}{n} \tr DQ_j +a_j^* Q_j a_j\right|}\le \frac{|z|}{\ \dzz}
\quad \textrm{and} \quad 
\frac 1{\left| 1+\eta_j^* Q_j \eta_j\right|}\le \frac{|z|}{\ \dzz}\ , \quad \forall z\in \C- \R^+\ .
\end{equation}
The following lemma describes the behavior of quadratic forms based
on random vectors (see for instance \cite[Lemma 2.7]{BaiSil98}).

\begin{lemma}
\label{lemma:approx-quadra}
Let $\bs{x}=(x_1,\cdots, x_n)$ be a $n \times 1$ vector where the
$x_i$'s are centered i.i.d.~complex random variables with unit variance;
consider $p\ge 2$ and assume that $\E|x_1|^{2p}<\infty$. Let
$M=(m_{ij})$ be a $n\times n$ complex matrix independent of $\bs{x}$.
Then there exists a 
constant $K_p$ such that
$$
\E\left|\bs{x}^* M \bs{x} -\tr M\right|^p
\le K_p \left( \tr M M^*\right)^{p/2}\  . 
$$
Let $\bs{u} \in \C^n$ be deterministic, then
$
\E|\bs{x}^* \bs{u}|^p ={\mathcal O} (\|\bs{u}\|^p)
$.
Moreover,
$
\E \| \bs{x}\|^p ={\mathcal O}(n^{p/2})\ .
$
\end{lemma}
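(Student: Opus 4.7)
The plan is to split the quadratic form into its diagonal and off-diagonal parts and bound each separately using classical inequalities for sums of independent random variables: Rosenthal's inequality for the diagonal part and Burkholder's martingale inequality for the off-diagonal part. Write
$$
\bs{x}^* M \bs{x} - \tr M \ =\ \sum_{i=1}^{n} m_{ii}(|x_i|^2-1)\ +\ \sum_{i\neq j} m_{ij}\,\bar x_i x_j \ \eqdef\ A_D + A_{OD},
$$
and estimate $\E|A_D|^p$ and $\E|A_{OD}|^p$ independently; the claim then follows from $\E|A_D+A_{OD}|^p \le 2^{p-1}(\E|A_D|^p+\E|A_{OD}|^p)$.

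For $A_D$, the summands $m_{ii}(|x_i|^2-1)$ are independent centered complex random variables with finite $p$-th moment, since $\E|x_1|^{2p}<\infty$ controls $\E\bigl||x_1|^2-1\bigr|^p$. Rosenthal's inequality gives
$$
\E|A_D|^p\ \le\ C_p\Bigl[\sum_i |m_{ii}|^p\,\E\bigl||x_1|^2-1\bigr|^p + \Bigl(\sum_i |m_{ii}|^2\,(\E|x_1|^4-1)\Bigr)^{p/2}\Bigr].
$$
Monotonicity of $\ell^q$ norms on $\C^n$ for $q\ge 2$ gives $\sum_i |m_{ii}|^p \le (\sum_i |m_{ii}|^2)^{p/2}$, and $\sum_i |m_{ii}|^2 \le \tr MM^*$; hence $\E|A_D|^p \le C_p(\tr MM^*)^{p/2}$.

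For $A_{OD}$, introduce the filtration $\mathcal G_k = \sigma(x_1,\dots,x_k)$ and the decomposition $A_{OD}=\sum_k W_k$ with $W_k = (\E[\cdot\,|\,\mathcal G_k] - \E[\cdot\,|\,\mathcal G_{k-1}])[A_{OD}]$. Using $\E x_i = 0$ and independence, a direct computation yields $W_k = \bar x_k\alpha_k + x_k\beta_k$ with $\alpha_k=\sum_{j<k}m_{kj}x_j$ and $\beta_k=\sum_{i<k}m_{ik}\bar x_i$. Burkholder's inequality for $p\ge 2$ gives $\E|A_{OD}|^p \le C_p\,\E\bigl(\sum_k |W_k|^2\bigr)^{p/2}$. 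The pointwise bound $|W_k|^2 \le 2|x_k|^2(|\alpha_k|^2+|\beta_k|^2)$, followed by Minkowski in $L^{p/2}(\Omega)$ and the independence of $x_k$ from $(\alpha_k,\beta_k)$, reduces the task to bounding $\sum_k (\E|\alpha_k|^p)^{2/p}$ and its $\beta$-analogue. A second application of Rosenthal to the linear form $\alpha_k$ gives $\E|\alpha_k|^p \le C_p\bigl(\sum_{j<k}|m_{kj}|^2\bigr)^{p/2}$, whence $\sum_k (\E|\alpha_k|^p)^{2/p} \le C\sum_{k,j}|m_{kj}|^2 \le C\,\tr MM^*$, and likewise for $\beta_k$. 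Combining yields $\E|A_{OD}|^p \le C_p(\tr MM^*)^{p/2}$.

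The two auxiliary bounds are immediate. Applying Rosenthal to $\bs{x}^*\bs{u}=\sum_i\bar x_i u_i$ gives $C_p[\sum_i|u_i|^p\,\E|x_1|^p + (\sum_i|u_i|^2)^{p/2}]=O(\|\bs{u}\|^p)$ by the $\ell^p\le\ell^2$ monotonicity. For $\E\|\bs{x}\|^p$, write $\|\bs{x}\|^p = n^{p/2}(n^{-1}\sum_i|x_i|^2)^{p/2}$ and apply Jensen's inequality to the convex function $t\mapsto t^{p/2}$ (valid for $p\ge 2$) to bound this by $n^{p/2-1}\sum_i\E|x_i|^p = n^{p/2}\,\E|x_1|^p < \infty$. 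The main technical step is the off-diagonal estimate: tracking the cross-terms in $|W_k|^2$ and cleanly chaining two Rosenthal-type bounds (one for the inner linear forms $\alpha_k,\beta_k$, one for the outer martingale square-function) requires some bookkeeping, but uses no ingredient beyond the standard inequalities invoked above.
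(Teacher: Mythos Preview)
The paper does not actually prove this lemma; it simply invokes \cite[Lemma 2.7]{BaiSil98} as a reference. Your argument is correct and is in fact essentially the standard proof of that result: the diagonal/off-diagonal split, Rosenthal on the diagonal part, and a martingale (Burkholder) treatment of the off-diagonal part is precisely how Bai--Silverstein proceed. The two auxiliary estimates are routine, and your use of the $\ell^p\le\ell^2$ monotonicity (valid for $p\ge 2$) together with the moment hypothesis $\E|x_1|^{2p}<\infty$ (needed to control $\E\bigl||x_1|^2-1\bigr|^p$) is exactly what is required.
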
 
Note by $D=\mathrm{diag}(d_i\,;\ 1\le i\le N)$ and $\tilde D=\mathrm{diag}(\tilde d_i\,;\ 1\le i\le n)$.
Gathering the previous estimates yields the following useful corollary:
\begin{coro}\label{coro:quadra} Let $z\in \C - \R^+$, and let $p\ge 2$. Denote by $\Delta_j$ the quantity:
$$
\Delta_j = \eta_j^* Q_j \eta_j - \frac {\tilde d_j}n \tr D Q_j - a_j^* Q_j a_j\ .
$$
Then 
$$
\E_{y_j} \left| 
\Delta_j
\right|^p = {\mathcal O}\left( 
\frac 1{n^{p/2}\, \dzz^p}
\right)\ .
$$ 
\end{coro}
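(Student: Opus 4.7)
The plan is to decompose $\Delta_j$ as
\[
\Delta_j = \bigl(y_j^* Q_j y_j - \tfrac{\tilde d_j}{n}\tr D Q_j\bigr) + y_j^* Q_j a_j + a_j^* Q_j y_j,
\]
using $\eta_j = y_j + a_j$ and expanding $\eta_j^* Q_j \eta_j$. The crucial structural observation is that $Q_j$ depends only on the columns of $\Sigma$ other than the $j$-th, hence is measurable with respect to the $\sigma$-field that defines $\E_{y_j}$; this means I can treat $Q_j$ as a deterministic matrix for the purpose of the conditional expectation and apply Lemma \ref{lemma:approx-quadra} directly. Writing $y_j = n^{-1/2}\sqrt{\tilde d_j}\, D^{1/2} x_j$, the first (centered) quadratic form becomes $x_j^* M x_j - \tr M$ with $M = n^{-1}\tilde d_j\, D^{1/2} Q_j D^{1/2}$, and the cross terms become linear forms $x_j^* w$ with $w$ deterministic given $Q_j$.

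For the centered quadratic part, Lemma \ref{lemma:approx-quadra} gives
\[
\E_{y_j}\Bigl|y_j^* Q_j y_j - \tfrac{\tilde d_j}{n}\tr D Q_j\Bigr|^p \le K_p \bigl(\tr MM^*\bigr)^{p/2}.
\]
I would bound $\tr MM^* \le n^{-2}\tilde d_j^{2}\|D\|^{2}\,\tr(Q_j Q_j^*) \le n^{-2}\boldsymbol{\tilde d_{\max}}^2\boldsymbol{d_{\max}}^2\,N\,\|Q_j\|^{2}$ and use $\|Q_j\| \le \dzz^{-1}$ together with $N\le \boldsymbol{\ell^+} n$ to obtain the desired $\mathcal{O}(n^{-p/2}\dzz^{-p})$ order. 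For each of the two cross terms, Lemma \ref{lemma:approx-quadra} applied to the linear form yields
\[
\E_{y_j}|y_j^* Q_j a_j|^p = \frac{\tilde d_j^{p/2}}{n^{p/2}}\,\E_{y_j}|x_j^* D^{1/2} Q_j a_j|^p = \mathcal{O}\bigl(n^{-p/2}\|D^{1/2} Q_j a_j\|^{p}\bigr),
\]
and the factor $\|D^{1/2} Q_j a_j\| \le \boldsymbol{d_{\max}}^{1/2}\|Q_j\|\|a_j\| \le \boldsymbol{d_{\max}}^{1/2}\boldsymbol{a_{\max}}\,\dzz^{-1}$ is controlled by A-\ref{ass:A} (which bounds column norms of $A$) and the resolvent estimate, producing again $\mathcal{O}(n^{-p/2}\dzz^{-p})$.

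Assembling the three pieces by Minkowski's inequality (or, equivalently, by a convexity bound on $|a+b+c|^p$) yields the stated estimate. There is no genuine obstacle here: the corollary is essentially a packaged application of Lemma \ref{lemma:approx-quadra}, where the only care needed is (i) to verify the independence of $Q_j$ from $y_j$ before conditioning, and (ii) to translate back from the $x_j$ coordinates to the $y_j$ coordinates while correctly tracking the factors of $\tilde d_j/n$ and $D^{1/2}$ so that the final bound absorbs them into the nice constants $\boldsymbol{\tilde d_{\max}}$, $\boldsymbol{d_{\max}}$, $\boldsymbol{a_{\max}}$ and $\boldsymbol{\ell^+}$.
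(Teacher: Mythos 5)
Your proof is correct and is precisely the argument the paper has in mind when it says the corollary follows by ``gathering the previous estimates'': decompose $\Delta_j$ via $\eta_j=y_j+a_j$, condition on the $\sigma$-field defining $\E_{y_j}$ so that $Q_j$ is fixed, and apply Lemma \ref{lemma:approx-quadra} to the centered quadratic form and the two linear forms, with $\|Q_j\|\le \dzz^{-1}$, A-\ref{ass:A}, A-\ref{ass:D} and $N\le \boldsymbol{\ell^+}n$ supplying the nice constants. No discrepancy with the paper's approach.
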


\begin{theo}[Burkholder inequality]\label{theo:burk}
  Let $(X_k)$ be a complex martingale difference sequence with respect
  to the filtration $({\mathcal F}_k)$. For every $p\ge 1$, there
  exists $K_p$ such that:
$$
\E \left| \sum_{k=1}^n X_k\right|^{2p} \le K_p \left( 
\E \left( \sum_{k=1}^n \E\left( |X_k|^2 \mid {\mathcal F}_{k-1} \right)
\right)^p+\sum_{k=1}^n \E |X_k|^{2p}
\right) \ . 
$$
\end{theo}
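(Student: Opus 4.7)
The plan is an induction on $p$, with the Burkholder--Davis--Gundy (BDG) square-function inequality as the main analytical input. Set $M_n = \sum_{k=1}^n X_k$ and denote the predictable quadratic variation by $V_n = \sum_k \E(|X_k|^2\mid\mathcal{F}_{k-1})$. The base case $p=1$ is immediate from orthogonality of the martingale increments, which gives $\E|M_n|^2 = \sum_k \E|X_k|^2 = \E V_n$ and hence the bound with $K_1 = 1$. I shall run the induction along the dyadic sequence $p \in \{1,2,4,8,\ldots\}$, then extend to intermediate real $p \geq 1$ by Lyapunov's inequality.

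For the inductive step at dyadic $p > 1$, apply BDG (which transfers to complex martingales by splitting into real and imaginary parts) to obtain a constant $c_p$ with
$$
\E|M_n|^{2p} \leq c_p\, \E\Bigl(\sum_{k=1}^n |X_k|^2\Bigr)^p.
$$
Decompose the square function as $\sum_k |X_k|^2 = V_n + N_n$, where $N_n = \sum_k Y_k$ with $Y_k = |X_k|^2 - \E_{k-1}|X_k|^2$ is itself a martingale with respect to $(\mathcal{F}_k)$. Since $V_n + N_n \geq 0$ pointwise, $(V_n+N_n)^p \leq 2^{p-1}(V_n^p + |N_n|^p)$, reducing matters to bounding $\E|N_n|^p$.

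Applying the inductive hypothesis to the martingale $(N_\ell)_\ell$ at exponent $p/2$ gives $\E|N_n|^p \leq K_{p/2}\bigl(\E (V_n')^{p/2} + \sum_k \E|Y_k|^p\bigr)$ with $V_n' = \sum_k \E_{k-1}|Y_k|^2$. Two elementary estimates then close the argument. The bound $|Y_k| \leq |X_k|^2 + \E_{k-1}|X_k|^2$ combined with the superadditivity of $x\mapsto x^p$ on $\R_+$ yields $\sum_k \E|Y_k|^p \leq C_p\bigl(\sum_k \E|X_k|^{2p} + \E V_n^p\bigr)$. Using $\E_{k-1}|Y_k|^2 \leq \E_{k-1}|X_k|^4$, the H\"older interpolation $\E_{k-1}|X_k|^4 \leq (\E_{k-1}|X_k|^2)^{(p-2)/(p-1)}(\E_{k-1}|X_k|^{2p})^{1/(p-1)}$ (valid for $p\geq 2$), H\"older's inequality applied to the sum over $k$, and Young's inequality together bound $\E (V_n')^{p/2}$ by a constant multiple of $\E V_n^p + \sum_k \E|X_k|^{2p}$, finishing the induction.

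The main obstacle is precisely this last interpolation step: it is what forces the particular shape of the right-hand side, namely $\E V_n^p$ together with $\sum_k \E|X_k|^{2p}$, rather than a cruder bound in $V_n$ alone. All other ingredients (BDG, the square-function decomposition, the reduction to the real case, the superadditivity estimate) are standard; the delicate point is the bookkeeping of constants in the recursive application of the theorem to the martingale $N_n$.
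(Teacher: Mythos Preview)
The paper does not prove this theorem: Burkholder's inequality is quoted as a classical result and used as a black box (see the unnumbered display \eqref{eq:burholder} in Section~\ref{sec:proof1}). There is therefore no ``paper's proof'' to compare against.

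Your approach---combining the Burkholder--Davis--Gundy square-function bound with the decomposition $\sum_k |X_k|^2 = V_n + N_n$ and an inductive control of the auxiliary martingale $N_n$---is a standard and essentially correct route to the result. The H\"older interpolation $\E_{k-1}|X_k|^4 \le (\E_{k-1}|X_k|^2)^{(p-2)/(p-1)}(\E_{k-1}|X_k|^{2p})^{1/(p-1)}$ and the subsequent H\"older-plus-Young bookkeeping for $\E(V_n')^{p/2}$ are sound for $p\ge 2$.

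One genuine gap: the claimed extension from dyadic $p$ to arbitrary real $p\ge 1$ ``by Lyapunov's inequality'' does not work as stated. Lyapunov applied to $M_n$ yields $\E|M_n|^{2p}\le (\E|M_n|^{2q})^{p/q}$ for dyadic $q\ge p$, but the right-hand side then involves $\E V_n^{q}$ and $\sum_k\E|X_k|^{2q}$ rather than the exponent-$p$ quantities, and moment log-convexity goes the wrong way to convert back. The fix is straightforward: establish the base case directly for every $p_0\in[1,2]$ by applying BDG once more to $N_n$ and using subadditivity $(\sum_k a_k)^{p_0/2}\le \sum_k a_k^{p_0/2}$ (valid since $p_0/2\le 1$) to get $\E[N]_n^{p_0/2}\le C(\sum_k \E|X_k|^{2p_0}+\E V_n^{p_0})$; then the doubling step $p\mapsto 2p$ covers all $p\ge 1$. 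For the applications in this paper only integer $p$ is used, and in fact any single sufficiently large dyadic $p$ already suffices, so the issue is cosmetic here.
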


\subsubsection*{A result on holomorphic functions:} 
\begin{lemma}[Part of Schwarz's lemma, Th.12.2 in \cite{rud-rca}]
\label{lm-schwarz}
Let $f$ be an holomorphic function on the open unit disc $U$ such that 
$f(0) = 0$ and $\sup_{z\in U} | f(z) | \leq 1$. Then $| f(z) | \leq |z|$
for every $z \in U$. 
\end{lemma}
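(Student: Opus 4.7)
The plan is to reduce the problem to an application of the maximum modulus principle on an auxiliary holomorphic function defined on $U$. First I would introduce $g(z) = f(z)/z$ for $z \in U \setminus \{0\}$; this function is clearly holomorphic away from the origin. The initial step is to show that $g$ extends holomorphically across $0$, which is a standard consequence of the hypothesis $f(0) = 0$: expanding $f$ in its power series around the origin, $f(z) = \sum_{k \ge 1} a_k z^k$, and dividing by $z$ yields $g(z) = \sum_{k \ge 0} a_{k+1} z^k$, which is a convergent power series on $U$. Thus $g$ is holomorphic on the entire open unit disc, with $g(0) = f'(0)$.

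Next, for any fixed radius $r$ with $0 < r < 1$, I would apply the maximum modulus principle to $g$ on the closed disc $\{|z| \le r\}$. On the boundary circle $|z| = r$ we have $|g(z)| = |f(z)|/r \le 1/r$, using the global bound $|f| \le 1$ on $U$. By the maximum modulus principle, this estimate propagates to the interior: $|g(z)| \le 1/r$ for every $z$ with $|z| \le r$.

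Finally, I would fix an arbitrary $z \in U$ and let $r \uparrow 1$ along values with $|z| < r < 1$. The previous step yields $|g(z)| \le 1/r$ for every such $r$, and passing to the limit gives $|g(z)| \le 1$, which is exactly the desired inequality $|f(z)| \le |z|$.

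No step here is particularly delicate; the only point that needs care is the removable-singularity argument at the origin, and this is handled cleanly by the power series representation thanks to the hypothesis $f(0) = 0$. Everything else is a direct application of the maximum modulus principle followed by a limit in the radius.
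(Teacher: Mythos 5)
Your proof is correct and is exactly the classical argument for Schwarz's lemma (the removable singularity of $f(z)/z$ at the origin followed by the maximum modulus principle on $|z|\le r$ and letting $r\uparrow 1$), which is precisely the proof of Theorem 12.2 in Rudin that the paper cites without reproving. Nothing is missing; the approach coincides with the standard one the paper relies on.
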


\subsubsection*{Rules about nice polynomials and nice constants} Some very simple rules of
calculus related to nice polynomials will be particularly helpful in 
the sequel: 

If $(\Phi_k, 1\le k\le K)$ and
$(\Psi_k, 1\le k\le K)$ are nice polynomials, then there exist nice
polynomials $\Phi$ and $\Psi$ such that:
\begin{equation}\label{eq:rule1}
\sum_{k=1}^K \Phi_k(x)\Psi_k(y) \le \Phi(x) \Psi(y)\quad \textrm{for} \quad x,y>0.
\end{equation} 
Take for instance $\Phi(x)= \sum_{k=1}^K \Phi_k(x)$ and $\Psi(x)= \sum_{k=1}^K \Psi_k(x)$.

If $\Phi_1$ and $\Psi_1$ are nice polynomials, then there exist nice
polynomials $\Phi$ and $\Psi$ such that:
\begin{equation}\label{eq:rule2}
\sqrt{\Phi_1(x) \Psi_1(y)} \le \Phi(x) \Psi (y)\quad \textrm{for} \quad x,y>0.
\end{equation}
Take for instance $\Phi= 2^{-1}(1+\Phi_1)$ and $\Psi= (1+\Psi_1)$ and note that:
$$
\sqrt{\Phi_1(x) \Psi_1(y)} \le \frac 12 (1+\Phi_1(x)\Psi_1(y))\le 
\frac{(1+\Phi_1(x))}2 (1+\Psi_1(y))\ .
$$

The values of nice constants or nice polynomials may change from line
to line within the proofs, the constant or the polynomial remaining
nice.

\subsection{Important estimates}

\begin{lemma}\label{lemma:QaaQ} Assume that the setting of Theorem \ref{th:main} holds
  true. Let $u$ be a deterministic complex $N\times 1$ vector. Then,
  for every $z\in \mathbb{C} - \mathbb{R}^+$, the following estimates
  hold true:
\begin{eqnarray}\label{eq:QaaQ}
  \E \left( \sum_{j=1}^n \E_{j-1} \left(u^* Q a_j a_j^* Q^* u\right)
\ \right)^p &\le & 
K_p  \frac {\|u\|^{2p}}{\dzz^{2p}}\ , \\
  \E \left( \sum_{j=1}^n \E_{j-1} \left(u^* Q \eta_j \eta_j^* Q^* u\right)
\ \right)^p &\le&  
\tilde K_p\frac {|z|^p\, \|u\|^{2p}}{\dzz^{2p}} \ ,\label{eq:QetaetaQ}
\end{eqnarray}
where $K_p$ and $\tilde K_p$ are nice constants depending on $p$ but not on $\|u\|$.
\end{lemma}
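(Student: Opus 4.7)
My plan is to exploit the Woodbury rank-one perturbation identity \eqref{eq:woodbury} to separate the dependence of $Q$ on $y_j$, combined with the tower property $\mathbb{E}_{j-1}(\cdot) = \mathbb{E}_{j-1}\mathbb{E}_{y_j}(\cdot)$ and the explicit formula $\mathbb{E}_{y_j}\eta_j\eta_j^* = (\tilde d_j/n) D + a_j a_j^*$. The key point is that $Q_j$ is independent of $y_j$, so after peeling off the rank-one correction, quadratic forms in $\eta_j$ paired against $Q_j$-based vectors collapse to computable expressions.

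For \eqref{eq:QetaetaQ}, Woodbury yields $u^* Q\eta_j = u^* Q_j\eta_j/(1 + \eta_j^* Q_j \eta_j)$, and \eqref{eq:property-ST} then gives $|u^* Q\eta_j|^2 \le (|z|/\dzz)^2 |u^* Q_j\eta_j|^2$. Taking $\mathbb{E}_{y_j}$ produces
\[
\mathbb{E}_{y_j} |u^* Q_j\eta_j|^2 = \frac{\tilde d_j}{n}\, u^* Q_j D Q_j^* u + |u^* Q_j a_j|^2,
\]
and summing in $j$ the first piece is almost surely bounded by $\tilde d_{\max} d_{\max}\|u\|^2/\dzz^2$ (using $\|Q_j\|\le\dzz^{-1}$ and $\sum_j\tilde d_j/n \le\tilde d_{\max}$), so it contributes at the correct order for every $p$. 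The second piece $\sum_j |u^* Q_j a_j|^2$ reduces to the Girko-type estimate discussed below. For \eqref{eq:QaaQ}, I use the dual identity $u^* Q a_j = u^* Q_j a_j - u^* Q\eta_j \cdot \eta_j^* Q_j a_j$ (derived from $Q = Q_j - Q\eta_j\eta_j^* Q_j$) to get
\[
|u^* Q a_j|^2 \le 2|u^* Q_j a_j|^2 + 2 |u^* Q\eta_j|^2\, |\eta_j^* Q_j a_j|^2.
\]
Summing and taking $\mathbb{E}_{j-1}$, the first term again reduces to the Girko bound, and the cross-term is controlled by combining \eqref{eq:QetaetaQ} with moment estimates on the linear form $\eta_j^* Q_j a_j$ from Lemma \ref{lemma:approx-quadra} (this is where the hypothesis $\sup_n\mathbb{E}|X_{ij}^n|^{8p}<\infty$ from Theorem \ref{th:main} is used).

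The Girko bound $\mathbb{E}[\sum_j\mathbb{E}_{j-1} |u^* Q_j a_j|^2]^p \le K_p \|u\|^{2p}/\dzz^{2p}$ is the heart of the argument. Writing $u^* Q_j a_j = u^* Q a_j + u^* Q_j\eta_j \cdot \eta_j^* Q a_j$ and exploiting the identity $(1 + \eta_j^* Q_j\eta_j)\, Q\eta_j = Q_j\eta_j$, the $N\times n$ matrix $R$ with $j$-th column $Q_j a_j$ factors as $R = QA + Q\Sigma\,\mathrm{diag}(\eta_j^* Q_j a_j)$, so $\|R\| \le \|QA\| + \|Q\Sigma\|\max_j |\eta_j^* Q_j a_j|$. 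The first factor is bounded by $a_{\max}/\dzz$; for the second, the resolvent identity $Q\Sigma\Sigma^* Q^* = Q^* + z Q Q^*$ combined with $\dzz \le |z|$ gives $\|Q\Sigma\|^2 \le 2|z|/\dzz^2$; and $\max_j|\eta_j^* Q_j a_j|$ is controlled in $p$-th moment via Lemma \ref{lemma:approx-quadra} and the high-moment hypothesis. The main obstacle is precisely this step: a naive Frobenius-norm bound on $R$ would introduce an unwanted factor of $n$, so one must instead control the spectral norm $\|R\|$, which forces the above decomposition and the use of high-moment estimates on the i.i.d.~entries; this is why the $8p$-th moment assumption appears in Theorem \ref{th:main}.
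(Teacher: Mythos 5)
Your plan misses the two observations that make this lemma work, and the steps you substitute for them do not close. First, and most importantly, you never address the real difficulty of the statement: the quantity is $\E\bigl(\sum_{j}\E_{j-1}(\cdot)\bigr)^p$, where the conditioning $\sigma$-field \emph{changes with} $j$. A pointwise (almost sure) bound on $\sum_j X_j$ does not pass to $\E\bigl(\sum_j\E_{j-1}X_j\bigr)^p$ for $p\ge 2$, because the summands $\E_{j-1}X_j$ live under different conditionings; yet all of your bounds (the spectral-norm factorization of the matrix with columns $Q_ja_j$, the bound $\|Q\Sigma\|^2\le 2|z|/\dzz^2$, the control of $\max_j|\eta_j^*Q_ja_j|$) are pointwise bounds on the uncompensated sums. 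The paper's proof is devoted almost entirely to this point: it observes that $\sum_j u^*Qa_ja_j^*Q^*u=u^*QAA^*Q^*u\le \boldsymbol{a_{\max}}^2\|u\|^2\dzz^{-2}$ (and, via $Q\Sigma\Sigma^*=I+zQ$, that $\sum_j u^*Q\eta_j\eta_j^*Q^*u\le(\dzz^{-1}+|z|\dzz^{-2})\|u\|^2$), and then transfers this deterministic bound to the compensated sum by an induction over $p$, expanding the $p$-th power, ordering the indices $j_1\le\cdots\le j_p$, and using that the factors with smaller indices are ${\mathcal F}_{j_p-1}$-measurable. Without that induction (or an explicit predictable-compensator moment inequality, which you do not invoke), your argument only yields the case $p=1$.

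Second, your reduction runs backwards and is lossy. You reduce the $Q$-statement to a bound on $\sum_j\E_{j-1}|u^*Q_ja_j|^2$, which is essentially estimate \eqref{eq:QiaaQi2} of Lemma \ref{lemma:QiaaQi}; in the paper that estimate is \emph{deduced from} the present lemma, and it only holds with nice polynomials $\tilde\Phi(|z|)\tilde\Psi(1/\dzz)$, not with a constant times $\dzz^{-2p}$. Your own route confirms this: the factors $\|Q\Sigma\|\le\sqrt{2|z|}\,\dzz^{-1}$ and the moments of $\max_j|\eta_j^*Q_ja_j|$ (which is of order $\dzz^{-1}$, not small) inject extra powers of $|z|$ and $1/\dzz$ and constants depending on the moments of $X_{ij}$, so at best you would prove a polynomial bound, not the stated $K_p\|u\|^{2p}\dzz^{-2p}$ for \eqref{eq:QaaQ} (which contains no $|z|$ factor and, in the paper, requires no moment hypothesis at all -- the $8p$-th moment assumption is used elsewhere, not here). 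Finally, your treatment of the cross term $\E_{j-1}\bigl(|u^*Q\eta_j|^2|\eta_j^*Q_ja_j|^2\bigr)$ is too vague to verify: the factor $|\eta_j^*Q_ja_j|^2$ is not ${\mathcal F}_{j-1}$-measurable and cannot be pulled out of $\E_{j-1}$, so ``combining \eqref{eq:QetaetaQ} with moment estimates'' would require a conditional H\"older step and fourth-moment analogues of \eqref{eq:QetaetaQ} that you have not established, and which again would degrade the exponents. The short route is the paper's: use the collapse $\sum_j a_ja_j^*=AA^*$ and $Q\Sigma\Sigma^*=I+zQ$, then the ordered-index induction over $p$.
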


Proof of Lemma \ref{lemma:QaaQ} is postponed to Appendix \ref{app:proof-misc}.

\begin{lemma}\label{lemma:QiaaQi}
  Assume that the setting of Theorem \ref{th:main} holds true. Let $u$
  be a deterministic complex $N\times 1$ vector. Then, for every $z\in
  \mathbb{C} - \mathbb{R}^+$, the following estimates hold true:
\begin{eqnarray}
  \sum_{j=1}^n \E \left( u^* Q_j a_j a_j^* Q_j^* u\right)^2 &\le& 
\Phi(|z|) \Psi\left( \frac 1\dzz \right)\, \|u\|^4\ ,
\label{eq:QiaaQi1}\\
\E \left( 
\sum_{j=1}^n \E_{j-1} \left(u^* Q_j a_j a_j^* Q_j^* u\right)\ \right)^p &\le&
\tilde \Phi(|z|) \tilde \Psi \left( \frac 1\dzz \right)\, \|u\|^{2p} , \label{eq:QiaaQi2}
\end{eqnarray}
where $\Phi,\Psi,\tilde \Phi$ and $\tilde \Psi$ are nice polynomials not 
depending on $\|u\|$. 
\end{lemma}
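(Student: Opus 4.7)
The plan is to reduce estimates on the partial resolvent $Q_j$ (with column $j$ removed) to analogous ones on the full resolvent $Q$, for which Lemma~\ref{lemma:QaaQ} already applies, via the rank-one perturbation formulas. I would first establish the identity
\[
u^* Q_j a_j \;=\; u^* Q a_j + (u^* Q \eta_j)(\eta_j^* Q_j a_j),
\]
obtained by combining \eqref{eq:woodbury2}, \eqref{eq:woodbury3}, and \eqref{eq:woodbury4}; squaring yields $|u^* Q_j a_j|^2 \le 2 |u^* Q a_j|^2 + 2|u^* Q \eta_j|^2 |\eta_j^* Q_j a_j|^2$.

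The decisive step is to further split $\eta_j = y_j + a_j$ in the last factor, writing $|\eta_j^* Q_j a_j|^2 \le 2|y_j^* Q_j a_j|^2 + 2|a_j^* Q_j a_j|^2$ together with the deterministic bound $|a_j^* Q_j a_j|^2 \le a_{\max}^4/\dzz^2$. Three types of summands result in $|u^* Q_j a_j|^2$: (a) $|u^* Q a_j|^2$, (b) $|u^* Q \eta_j|^2 |y_j^* Q_j a_j|^2$, and (c) $(a_{\max}^4/\dzz^2) |u^* Q \eta_j|^2$. Terms (a) and (c) are directly controlled by Lemma~\ref{lemma:QaaQ}, inequalities \eqref{eq:QaaQ} and \eqref{eq:QetaetaQ} respectively.

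For the cross term (b), the crucial feature is that $y_j$ is independent of $Q_j$ and $a_j$. Using the pointwise bound $|u^* Q \eta_j|^2 \le \|u\|^2 \|\eta_j\|^2/\dzz^2$ and conditioning via $\E_{y_j}$, the expansion $\|\eta_j\|^2 = \|y_j\|^2 + 2\Real(y_j^* a_j) + \|a_j\|^2$ combined with Cauchy--Schwarz and the moment bounds of Lemma~\ref{lemma:approx-quadra} (yielding $\E\|y_j\|^4 = O(1)$, $\E|y_j^* a_j|^4 = O(n^{-2})$, and $\E|y_j^* Q_j a_j|^4 = O(n^{-2}\dzz^{-4})$) produces
\[
\E_{y_j}\bigl(\|\eta_j\|^2 |y_j^* Q_j a_j|^2\bigr) \;\le\; \frac{C}{n\,\dzz^2}
\]
for a nice constant $C$. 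Summing over $j$ and using the tower property $\E_{j-1} X = \E_{j-1}[\E_{y_j} X]$ yields the \emph{deterministic} bound $\sum_{j=1}^n \E_{j-1}(|u^* Q \eta_j|^2 |y_j^* Q_j a_j|^2) \le C\|u\|^2/\dzz^4$, which trivially controls any $p$-th moment. Combining the three contributions via $(a+b+c)^p \le 3^{p-1}(a^p+b^p+c^p)$ and the nice-polynomial calculus \eqref{eq:rule1}--\eqref{eq:rule2} establishes \eqref{eq:QiaaQi2}.

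Inequality \eqref{eq:QiaaQi1} then follows at once from the pointwise bound $|u^* Q_j a_j|^4 \le (\|u\|^2 a_{\max}^2/\dzz^2)\,|u^* Q_j a_j|^2$ (using $\|Q_j a_j\| \le a_{\max}/\dzz$), combined with the $p=1$ case of \eqref{eq:QiaaQi2}: $\sum_j \E|u^* Q_j a_j|^2 = \E\sum_j \E_{j-1}|u^* Q_j a_j|^2 \le \tilde\Phi(|z|)\,\tilde\Psi(1/\dzz)\, \|u\|^2$. The main technical obstacle is the moment expansion in the bound on $\E_{y_j}(\|\eta_j\|^2 |y_j^* Q_j a_j|^2)$, which relies on the finite $8p$-th moment hypothesis on $X_{ij}$ from Theorem~\ref{th:main}.
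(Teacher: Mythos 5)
Your argument is correct, and it reaches both estimates by a route organized differently from the paper's. The paper expands the quadratic form as $u^*Q_ja_ja_j^*Q_j^*u=\chi_{1j}+\chi_{2j}+\chi_{3j}+\chi_{4j}$ with $\chi_{1j}=u^*(Q_j-Q)a_ja_j^*(Q_j-Q)^*u$, factorizes $\chi_{1j}$ through the Sherman--Morrison structure, controls the factor $1+\eta_j^*Q_j\eta_j$ by introducing $\Delta_j$ (Corollary \ref{coro:quadra}) and proves the separate estimate \eqref{eq:cross-estimate} for $a_j^*Q^*\eta_j\eta_j^*Qa_j/(1-\eta_j^*Q\eta_j)$, and then treats \eqref{eq:QiaaQi1} and \eqref{eq:QiaaQi2} by two parallel computations. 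You instead work at the vector level with the exact identity $u^*Q_ja_j=u^*Qa_j+(u^*Q\eta_j)(\eta_j^*Q_ja_j)$ (a legitimate consequence of \eqref{eq:woodbury2}--\eqref{eq:woodbury4}), split $\eta_j=y_j+a_j$ only inside $\eta_j^*Q_ja_j$, and dispose of the $y_j$ cross term by an almost-sure conditional bound $\E_{y_j}\left(\|\eta_j\|^2|y_j^*Q_ja_j|^2\right)=\mathcal O\left(n^{-1}\dzz^{-2}\right)$, which after summation over $j$ is deterministic and hence compatible with any $p$-th moment (the tower step $\E_{j-1}=\E_{j-1}\E_{y_j}$ is valid since ${\mathcal F}_{j-1}$ is contained in the $\sigma$-field defining $\E_{y_j}$); the two remaining terms are exactly those covered by \eqref{eq:QaaQ} and \eqref{eq:QetaetaQ}. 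This bypasses $\Delta_j$ and \eqref{eq:cross-estimate} entirely, and you obtain \eqref{eq:QiaaQi1} as a corollary of the $p=1$ case of \eqref{eq:QiaaQi2} via the crude bound $|u^*Q_ja_j|^2\le\|u\|^2\boldsymbol{a_{\max}}^2\dzz^{-2}$, whereas the paper proves it directly. Both proofs rest on Lemma \ref{lemma:QaaQ} as the backbone; your cross-term step needs only fourth-moment bounds through Lemma \ref{lemma:approx-quadra} (so your closing remark that it relies on the $8p$-th moment hypothesis overstates what is used, though that hypothesis of course covers it), while the paper's decomposition yields the explicit powers of $|z|$ and $\dzz^{-1}$ it tracks; for the statement as given, with unspecified nice polynomials, your bounds are entirely sufficient.
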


Proof of Lemma \ref{lemma:QiaaQi} is postponed to Appendix \ref{app:proof-misc}.

In order to proceed, it is convenient to introduce the following intermediate quantities ($z\in \C-\R^+$):
\begin{eqnarray}
  \alpha_n(z) &=& \frac 1n \tr D_n \E Q_n(z),
\qquad \tilde \alpha_n(z) \quad=\quad  \frac 1n \tr \tilde D_n \E \tilde Q_n(z), \label{eq:def-alpha}\\
  R_n(z) & = & 
  \left( -z(I_N+\tilde \alpha(z) D_n )I_N + A_n\left( I_n + \alpha(z) \tilde D_n\right)^{-1} A_n^*
  \right)^{-1}\ ,\label{eq:def-R}\\
\tilde R_n(z) & = & 
\left( -z(I_n +\alpha(z)\tilde D_n) + A^*_n\left( I_N + \tilde \alpha(z) D_n\right)^{-1}  A_n
\right)^{-1}\ .\label{eq:def-Rtilde}
\end{eqnarray}
A slight modification of the proof of \cite[Proposition
5.1-(3)]{HLN07} yields the following estimates:
$$
\| R_n(z)\|\le \frac 1\dzz\ ,\quad \| \tilde R_n(z)\|\le \frac 1\dzz \quad \textrm{for}\ z\in \C-\R^+\ .
$$
The same estimates hold true for $\| T_n(z)\|$ and $\| \tilde T_n(z)\|$.


\subsection{Main steps of the proof}
In order to prove Theorem \ref{th:main}, we split the quantity
of interest $u^* (Q-T) u$ into three parts:
$$
u^* (Q-T) v = u^* (Q-\E Q) v + u^* (\E Q-R) v + u^* (R-T) v\ ,
$$
and handle each term separately in the following propositions:

\begin{prop}\label{prop:intermediate1}
  Assume that the setting of Theorem \ref{th:main} holds true. Let
  $(u_n)$ and $(v_n)$ be sequences of $N\times 1$ deterministic
  vectors. Then, for every $z\in \C-\R^+$,
$$
  \E \left| u_n^* \left( Q_n(z) - \E Q_n(z)\right) v_n\right|^{2p} \le
\frac{1}{n^p} \Phi_p(|z|)\Psi_p\left( \frac 1{\dzz}\right)\, \|u_n\|^{2p} \|v_n\|^{2p} , 
$$
where $\Phi_p$ and $\Psi_p$ are nice polynomials depending on $p$ but not 
on $(u_n)$ nor on $(v_n)$.
\end{prop}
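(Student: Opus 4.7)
The strategy is a classical martingale-plus-Burkholder argument, with a crucial centering step in the single column $y_j$ that isolates the fluctuating part of each column contribution from the ``dangerous'' deterministic-in-$y_j$ piece. Throughout I drop the subscript $n$.

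\textbf{Martingale setup.} Using the filtration $({\mathcal F}_j)$ from Section~\ref{sec:notation} and the fact that $Q_j$ is $\sigma(y_\ell : \ell\neq j)$-measurable, so $(\E_j-\E_{j-1})Q_j = 0$, one has
$$
u^*(Q - \E Q)v \;=\; \sum_{j=1}^n X_j,\qquad X_j \eqdef (\E_j - \E_{j-1})U_j,\qquad U_j \eqdef u^*(Q-Q_j)v.
$$
The Sherman-Morrison identity \eqref{eq:woodbury} yields $U_j = -\beta_j\, u^* Q_j \eta_j \eta_j^* Q_j v$ with $\beta_j = (1+\eta_j^* Q_j \eta_j)^{-1}$, and \eqref{eq:property-ST} gives $|\beta_j|\le|z|/\dzz$. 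Burkholder's inequality (Theorem~\ref{theo:burk}) reduces the proof to bounding both $\E(\sum_j \E_{j-1}|X_j|^2)^p$ and $\sum_j \E|X_j|^{2p}$ by $O(n^{-p})\|u\|^{2p}\|v\|^{2p}$ times nice polynomials in $|z|$ and $1/\dzz$.

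\textbf{Centering in $y_j$.} Let $\E^{(j)}$ denote conditional expectation given $(y_\ell)_{\ell\neq j}$. Independence of $y_j$ from those variables gives $\E_j[\E^{(j)}U_j] = \E_{j-1}[\E^{(j)}U_j]$, hence
$$
X_j \;=\; (\E_j - \E_{j-1})\bigl(U_j - \E^{(j)}U_j\bigr),
$$
and two applications of conditional Jensen yield $\E_{j-1}|X_j|^2 \le 4\,\E_{j-1}|U_j-\E^{(j)}U_j|^2$ and $\E|X_j|^{2p} \le 2^{2p+1}\,\E|U_j - \E^{(j)}U_j|^{2p}$. This centering is essential: it annihilates the $y_j$-deterministic component of $U_j$ involving $u^*Q_j a_j a_j^* Q_j v$, which per $j$ is of order one and could never sum to $O(n^{-p})$ on its own.

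\textbf{Decomposition and use of Lemma~\ref{lemma:QiaaQi}.} Introduce the $y_j$-independent proxy $\hat\beta_j = (1 + \tfrac{\tilde d_j}{n}\tr DQ_j + a_j^* Q_j a_j)^{-1}$, which also satisfies $|\hat\beta_j|\le|z|/\dzz$ by \eqref{eq:property-ST}. The identity $\beta_j - \hat\beta_j = -\hat\beta_j\beta_j\Delta_j$ with $\Delta_j$ from Corollary~\ref{coro:quadra}, together with $\eta_j = a_j + y_j$, gives
$$
U_j - \E^{(j)}U_j \;=\; -\hat\beta_j\bigl(\mathcal{A}_j + \mathcal{B}_j + \mathcal{C}_j\bigr) + \mathcal{E}_j,
$$
where $\mathcal{A}_j = u^*Q_j\bigl(y_j y_j^* - \tfrac{\tilde d_j}{n} D\bigr)Q_j v$ is a centered quadratic form in $y_j$, $\mathcal{B}_j = (u^*Q_j y_j)(a_j^* Q_j v)$ and $\mathcal{C}_j = (u^*Q_j a_j)(y_j^* Q_j v)$ are linear-in-$y_j$ cross terms, and $\mathcal{E}_j$ collects the $(\beta_j - \hat\beta_j)$ contribution. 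Lemma~\ref{lemma:approx-quadra} supplies estimates such as $\E^{(j)}|\mathcal{A}_j|^2 = O((\tilde d_j/n)^2\|u\|^2\|v\|^2/\dzz^4)$ and
$$
\E^{(j)}|\mathcal{B}_j|^2 \;=\; \tfrac{\tilde d_j}{n}\,(u^*Q_j D Q_j^* u)\,|a_j^* Q_j v|^2 \;\le\; \frac{C\|u\|^2}{n\,\dzz^2}\,v^* Q_j^* a_j a_j^* Q_j v,
$$
with a symmetric bound for $\mathcal{C}_j$. Summing over $j$, raising to the $p$-th power, and invoking inequality \eqref{eq:QiaaQi2} of Lemma~\ref{lemma:QiaaQi} on $\sum_j \E_{j-1}(u^*Q_j a_j a_j^* Q_j^* u)$ and $\sum_j \E_{j-1}(v^* Q_j^* a_j a_j^* Q_j v)$ then bounds $\E(\sum_j \E_{j-1}|X_j|^2)^p$ as required. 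The term $\sum_j \E|X_j|^{2p}$ is handled analogously: use the uniform bound $|a_j^* Q_j v|^2 \le a_{\max}^2\|v\|^2/\dzz^2$ on $(p-1)$ of the $p$ factors produced by the $2p$-moment estimates and apply \eqref{eq:QiaaQi2} with exponent one to the remaining sum.

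\textbf{Main obstacle.} The delicate step is controlling $\mathcal{E}_j$: since $\beta_j - \hat\beta_j$ multiplies the full quadratic form $u^* Q_j \eta_j \eta_j^* Q_j v$, whose deterministic-in-$y_j$ part is of order one, one must extract an $n^{-1/2}$ decay from $\Delta_j$ via its higher-moment estimates from Corollary~\ref{coro:quadra}, combined through Cauchy-Schwarz with the higher-moment estimates of linear and quadratic forms in $y_j$ from Lemma~\ref{lemma:approx-quadra}. This is precisely where the hypothesis $\sup_n \E|X_{ij}^n|^{8p} < \infty$ enters: it delivers control up to the $4p$-th moment of the $\eta_j$-forms that arise. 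The bookkeeping of polynomial factors in $|z|$ and $1/\dzz$ is then routine via the rules \eqref{eq:rule1}--\eqref{eq:rule2}.
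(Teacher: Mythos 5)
Your proposal is correct and follows essentially the same route as the paper: the martingale-difference decomposition of $u^*(Q-\E Q)v$ with Burkholder, the replacement of the denominator $1+\eta_j^*Q_j\eta_j$ by the $y_j$-independent proxy (your $\hat\beta_j$, the paper's $\Gamma_{1j}$/$\Gamma_{2j}$ splitting via $\Delta_j$), the observation that the $a_ja_j^*$ piece is annihilated by $(\E_j-\E_{j-1})$ (your $\E^{(j)}$-centering), and the control of the cross and correction terms through Corollary~\ref{coro:quadra}, Lemma~\ref{lemma:approx-quadra} and estimate~\eqref{eq:QiaaQi2} of Lemma~\ref{lemma:QiaaQi}, with the $8p$-th moment assumption entering exactly where the paper needs it. The only cosmetic deviation is that you treat the bilinear form in $(u,v)$ directly instead of first polarizing to a quadratic form, which changes nothing of substance.
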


Proposition \ref{prop:intermediate1} is proved in Section \ref{sec:proof1}.

\begin{prop}\label{prop:intermediate2}
Assume that the setting of Theorem \ref{th:main} holds true. 
\begin{itemize}
\item[(i)] Let $(u_n)$
  and $(v_n)$ be sequences of $N\times 1$ deterministic vectors. Then, 
for every $z\in \C-\R^+$, 
$$
   \left| u_n^* \left( \E Q_n(z) -R_n(z)\right) v_n\right| \le
\frac{1}{\sqrt{n}} \Phi(|z|)\Psi\left( \frac 1{\dzz}\right)\, \|u_n\|\, \|v_n\|, 
$$
where $\Phi$ and $\Psi$ are nice polynomials, not depending on $(u_n)$ nor on 
$(v_n)$.\\
\item[(ii)] Let $M_n$ be a $N\times N$ deterministic 
  matrix. Then, for every $z\in \C - \R^+$,
$$
\left| \frac 1n \tr M_n \E Q_n(z) - \frac 1n \tr M_n R_n(z) \right| 
\le \frac 1n \Phi(|z|) \Psi\left(\frac 1{\dzz}\right)\, \|M_n\| ,
$$
where $\Phi$ and $\Psi$ are nice polynomials, not depending on $M_n$.
\end{itemize}
\end{prop}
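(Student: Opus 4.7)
The plan is the classical ``canonical equation'' approach. I start from the identity
\begin{equation*}
\mathbb{E} Q - R \;=\; R\,\mathbb{E}\bigl[(R^{-1} - Q^{-1})Q\bigr],
\end{equation*}
which holds because $R$ is deterministic (it follows from $Q - R = R(R^{-1} - Q^{-1})Q$). Writing $Q^{-1} = \sum_j \eta_j \eta_j^* - zI$ and $R^{-1} = -zI - z\tilde\alpha D + \sum_j \frac{a_j a_j^*}{1 + \alpha \tilde d_j}$, and using Woodbury in the form $\eta_j^* Q = \eta_j^* Q_j/\beta_j$ (with $\beta_j = 1+\eta_j^* Q_j\eta_j$), I obtain
\begin{equation*}
(R^{-1} - Q^{-1})Q \;=\; -z\tilde\alpha\, DQ + \sum_j \frac{a_j a_j^*\,Q}{1 + \alpha \tilde d_j} - \sum_j \frac{\eta_j \eta_j^* Q_j}{\beta_j}.
\end{equation*}

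The heart of the argument is to replace the random $\beta_j$ and $Q_j$ in the last sum by their deterministic equivalents. I split $\beta_j = \hat\beta_j + \Delta_j$ with $\hat\beta_j := 1 + \frac{\tilde d_j}{n}\tr D Q_j + a_j^* Q_j a_j$; Corollary \ref{coro:quadra} provides $\mathbb{E}_{y_j}|\Delta_j|^p = \mathcal{O}(n^{-p/2}\dzz^{-p})$. Expanding $\frac{1}{\beta_j} = \frac{1}{\hat\beta_j} - \frac{\Delta_j}{\hat\beta_j \beta_j}$ and using the independence of $\eta_j$ from $Q_j$ together with $\mathbb{E}[\eta_j \eta_j^*] = \frac{\tilde d_j}{n}D + a_j a_j^*$, the expectation of the leading term factorises as $\hat\beta_j^{-1}\bigl(\frac{\tilde d_j}{n}D + a_j a_j^*\bigr)\mathbb{E}[Q_j]$. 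Replacing next $\frac{1}{n}\tr D Q_j$ by $\alpha$ (using Proposition \ref{prop:intermediate1} at basis vectors together with a rank-one bound for $Q - Q_j$) and $\mathbb{E}[Q_j]$ by $\mathbb{E} Q$ reduces the problem to an identity of the shape $\mathbb{E} Q - R = \mathcal{L}(\mathbb{E} Q - R) + \mathcal{E}$, where $\mathcal{L}$ is a linear operator that can be absorbed on the left and $\mathcal{E}$ is an explicit residual.

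The main obstacle is to control $u^* \mathcal{E} v$ at rate $n^{-1/2}\|u\|\|v\|$. The dangerous pieces of $\mathcal{E}$ are sums of the form $\sum_j u^*(\ldots) a_j a_j^*(\ldots) v$, which are precisely what Lemma \ref{lemma:QiaaQi} is designed for: Cauchy--Schwarz in $j$ combined with \eqref{eq:QiaaQi1} yields the required bound. The $\Delta_j$--driven contributions are handled by Corollary \ref{coro:quadra} together with the $n^{-p}$ moment bound on $u^*(Q - \mathbb{E} Q)v$ supplied by Proposition \ref{prop:intermediate1}. The most delicate bookkeeping is to organise the cascade of substitutions ($\beta_j \leftrightarrow \hat\beta_j \leftrightarrow 1 + \alpha\tilde d_j + a_j^* R a_j$; $Q_j \leftrightarrow \mathbb{E} Q_j \leftrightarrow \mathbb{E} Q \leftrightarrow R$) so that each replacement error is separately controllable and so that no copy of $u^*(\mathbb{E} Q - R)v$ reappears uncontrolled on the right-hand side; this is what forces the absorption step via $\mathcal{L}$.

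For part (ii), the trace structure $\frac{1}{n}\tr M(\cdot)$ affords one additional round of averaging. Running the same expansion under the trace and using cyclicity, the residual $\frac{1}{n}\tr M \mathcal{E}$ reorganises as $\frac{1}{n}\sum_j \mathbb{E}\bigl[(\text{trace-type})_j\bigr]$, where the summands have a martingale-difference structure after centering by $\mathbb{E}_{y_j}$. Either a Burkholder-type argument in $j$ (Theorem \ref{theo:burk}) or a direct application of part (i) combined with a Cauchy--Schwarz-in-$j$ step gains an additional $n^{-1/2}$ factor, producing the announced $n^{-1}$ rate. The $\|M\|$--dependence enters through the uniform estimate $\sup_i \|M^* e_i\| \le \|M\|$ used when passing from trace-type to bilinear-form-type controls at unit vectors.
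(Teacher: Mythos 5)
Your starting identity and your toolkit (resolvent identity, Woodbury, the split $\beta_j=\hat\beta_j+\Delta_j$, Corollary \ref{coro:quadra}, Lemma \ref{lemma:QiaaQi}, Proposition \ref{prop:intermediate1}) are the same as in the paper, but the reduction to an equation $\E Q-R=\mathcal L(\E Q-R)+\mathcal E$ ``with $\mathcal L$ absorbed on the left'' is a genuine gap, and it is avoidable. The whole point of the intermediate matrix $R$ is that it is built from $\alpha=\frac1n\tr D\,\E Q$ and $\tilde\alpha=\frac1n\tr\tilde D\,\E\tilde Q$, so that, by \eqref{eq:diag-coresolvent}, $z\tilde\alpha=-\frac1n\sum_j\tilde d_j\,\E\bigl(1+\eta_j^*Q_j\eta_j\bigr)^{-1}$ and the term $-z\tilde\alpha\,u^*RDQu$ cancels (up to fluctuations controlled by Proposition \ref{prop:intermediate1}, Corollary \ref{coro:quadra} and Lemma \ref{lemma:QiaaQi}) against the $\frac{\tilde d_j}{n}D$--part of $\sum_j u^*R\eta_j\eta_j^*Q_ju/\beta_j$; after the grouping into $Z_{1j},\dots,Z_{4j}$ every residual is bounded directly and \emph{no} copy of the unknown reappears -- there is nothing to absorb. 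Your cascade, in contrast, re-injects the unknown through substitutions such as $a_j^*Q_ja_j\leftrightarrow a_j^*Ra_j$ and $\E Q_j\leftrightarrow R$, and then rests entirely on the unproved claim that $I-\mathcal L$ can be inverted with nice-polynomial control in $|z|$ and $\dzz^{-1}$, uniformly on $\C-\R^+$. That quantitative invertibility is precisely the hard analytic content of such self-consistent arguments: in this paper it is established only for the scalar $2\times2$ system arising in the $R$-versus-$T$ comparison, and even there it costs Propositions \ref{prop:lower-estimate-det} and \ref{prop:auxiliary-det} (tightness of the underlying measures, lower bounds on $|\delta|$, a Schwarz-lemma argument), holds only on the region ${\mathcal E}_n$, and requires a separate Haagerup--Thorbjornsen-type argument outside ${\mathcal E}_n$. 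No analogous bound is offered (or obvious) for your bilinear-form-level operator $\mathcal L$, whose norm is certainly not smaller than $1$ near $\R^+$, so the absorption step cannot simply be asserted.

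Two further points. First, the denominators $\hat\beta_j=1+\frac{\tilde d_j}{n}\tr DQ_j+a_j^*Q_ja_j$ and $1+\alpha\tilde d_j$ (the one actually present in $R^{-1}$) differ by the $O(1)$ quantity $a_j^*Q_ja_j$; this discrepancy is killed only because the Woodbury correction \eqref{eq:woodbury} applied to $\sum_j u^*Ra_ja_j^*Qu/(1+\alpha\tilde d_j)$ produces exactly the term $u^*Ra_j\,a_j^*Q_ja_j\,a_j^*Q_ju/\bigl(\beta_j(1+\alpha\tilde d_j)\bigr)$ that cancels it -- this is the ``group the terms that compensate one another'' step which leaves only $\alpha\tilde d_j-y_j^*Q_jy_j$ in $Z_{2j}$ and a $W_{4j}$ with $\E_{y_j}W_{4j}=0$. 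Your target $1+\alpha\tilde d_j+a_j^*Ra_j$ is not a denominator of $R^{-1}$, and the proposal never exhibits this cancellation; deferring it to the absorption compounds the first gap. Second, for part (ii), ``a direct application of part (i) combined with Cauchy--Schwarz in $j$'' cannot yield the $n^{-1}$ rate: applying (i) to the entries $e_i^*(\E Q-R)Me_i$ and averaging gives only $n^{-1/2}\|M\|$, and Burkholder gives nothing here since $\frac1n\tr M(\E Q-R)$ is deterministic. The correct mechanism (the one behind the paper's ``very similar and thus omitted'') is to re-run the proof of (i) with $u^*(\cdot)v$ replaced by $\frac1n\tr M(\cdot)$ and observe that every error term gains a factor $n^{-1/2}$, because the Hilbert--Schmidt-type norms of matrices such as $\frac1nQ_jMR$ are an order $\sqrt n$ smaller than those of the rank-one matrices $Q_juu^*R$ appearing in (i).
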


Proposition \ref{prop:intermediate2}-(i) is proved in Section
\ref{sec:proof2}; proof of Proposition \ref{prop:intermediate2}-(ii)
is very similar and thus omitted.

\begin{prop}\label{prop:intermediate3}
Assume that the setting of Theorem \ref{th:main} holds true. Let $(u_n)$
  and $(v_n)$ be sequences of $N\times 1$ deterministic vectors. 

Then, for every $z\in \C-\R^+$, 
$$
\left| u_n^* \left( R_n(z) -T_n(z)\right) v_n\right| \le
\frac{1}{n} \Phi(|z|)\Psi\left( \frac 1{\dzz}\right)\, \|u_n\|\, \|v_n\| , 
$$
where $\Phi$ and $\Psi$ are nice polynomials, not depending on $(u_n)$ nor on 
$(v_n)$.
\end{prop}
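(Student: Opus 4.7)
\textbf{Proof plan for Proposition \ref{prop:intermediate3}.}
The starting point is the resolvent identity $R - T = T(T^{-1}-R^{-1})R$ applied to the explicit expressions \eqref{eq:def-T} and \eqref{eq:def-R}. Since $R^{-1}$ and $T^{-1}$ differ only through the substitution $(\alpha,\tilde\alpha) \leftrightarrow (\delta,\tilde\delta)$, a direct computation combined with the identity $(I+\alpha\tilde D)^{-1}-(I+\delta\tilde D)^{-1} = (\delta-\alpha)(I+\delta\tilde D)^{-1}\tilde D(I+\alpha\tilde D)^{-1}$ yields
\begin{equation*}
T^{-1}-R^{-1} \;=\; z(\tilde\alpha-\tilde\delta)D \;-\; (\alpha-\delta)\,A(I+\delta\tilde D)^{-1}\tilde D(I+\alpha\tilde D)^{-1}A^{*}.
\end{equation*}
Setting $\varepsilon = \alpha-\delta$ and $\tilde\varepsilon = \tilde\alpha-\tilde\delta$, and using the uniform bounds $\|T\|,\|R\|\le \dzz^{-1}$, $\|A\|\le a_{\max}$, $\|D\|\le d_{\max}$, together with $\|(I+\delta\tilde D)^{-1}\|,\|(I+\alpha\tilde D)^{-1}\|\le 1$ (both $\delta$ and $\alpha$ have nonnegative imaginary part / are Stieltjes transforms of positive measures, and the diagonal of $\tilde D$ is nonnegative), I obtain a bound of the form
\begin{equation*}
|u^{*}(R-T)v| \;\le\; \Phi_{0}(|z|)\Psi_{0}\!\left(\tfrac{1}{\dzz}\right)\bigl(|\varepsilon|+|\tilde\varepsilon|\bigr)\|u\|\|v\|.
\end{equation*}

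The remaining task is to prove that $|\varepsilon|+|\tilde\varepsilon| = \mathcal O(n^{-1})$ with the appropriate dependence on $|z|$ and $1/\dzz$. Since $\delta = n^{-1}\tr D\,T$ and $\alpha = n^{-1}\tr D\,\mathbb{E}Q$ (and similarly with tildes), I split
\begin{equation*}
\varepsilon \;=\; \tfrac{1}{n}\tr D(\mathbb{E}Q - R) + \tfrac{1}{n}\tr D(R-T),
\end{equation*}
and analogously for $\tilde\varepsilon$. By Proposition \ref{prop:intermediate2}(ii), the first term is $\mathcal O(n^{-1})$ times a nice polynomial in $|z|$ and $1/\dzz$. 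Plugging in the explicit formula for $T^{-1}-R^{-1}$ above and again using $R-T = T(T^{-1}-R^{-1})R$, the second term becomes a linear combination of $\varepsilon$ and $\tilde\varepsilon$ with explicit coefficients. The same operation applied to $\tilde R-\tilde T$ (with $\tilde D$ replacing $D$) closes the system into
\begin{equation*}
(I_{2} - M_{n}(z))\begin{pmatrix}\varepsilon\\ \tilde\varepsilon\end{pmatrix} \;=\; \begin{pmatrix}\rho_{n}(z)\\ \tilde\rho_{n}(z)\end{pmatrix},\qquad |\rho_{n}|+|\tilde\rho_{n}|\le \tfrac{1}{n}\Phi_{1}(|z|)\Psi_{1}\!\bigl(\tfrac{1}{\dzz}\bigr),
\end{equation*}
where $M_{n}(z)$ is a $2\times 2$ matrix whose entries are normalized traces such as $n^{-1}\tr(DTDR)$ and $n^{-1}\tr(DTA(I+\delta\tilde D)^{-1}\tilde D(I+\alpha\tilde D)^{-1}A^{*}R)$.

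The main obstacle is the invertibility of $I_{2}-M_{n}(z)$ with a quantitative bound on its inverse. This is the exact analogue of the stability statement already established in \cite{HLN07} for the canonical system \eqref{eq:fundamental}: one shows that the determinant of $I_{2}-M_{n}(z)$ is bounded below by a negative power of $\dzz$ multiplied by a negative power of $(1+|z|)$, typically by exploiting the positivity of the imaginary parts of $\delta,\tilde\delta,\alpha,\tilde\alpha$ on $\Cplus$ (a Stieltjes-transform argument in the spirit of Lemma \ref{lm-schwarz}) and by analytic continuation/symmetry to cover all of $\C-\R^{+}$. Once $\|(I_{2}-M_{n}(z))^{-1}\|$ is controlled by a nice polynomial in $|z|$ and $1/\dzz$, I conclude $|\varepsilon|+|\tilde\varepsilon|\le n^{-1}\Phi_{2}(|z|)\Psi_{2}(1/\dzz)$. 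Combining with the first displayed inequality and using the product rules \eqref{eq:rule1}--\eqref{eq:rule2} gives the claimed estimate with a single pair of nice polynomials $\Phi,\Psi$.
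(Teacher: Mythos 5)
Your first part follows the paper's own route: the resolvent identity applied to \eqref{eq:def-T} and \eqref{eq:def-R} reduces everything to bounding $\alpha-\delta$ and $\tilde\alpha-\tilde\delta$, and your $2\times 2$ linear system $(I_2-M_n)(\varepsilon,\tilde\varepsilon)^T=(\rho_n,\tilde\rho_n)^T$ is exactly the paper's system \eqref{eq:system-C0} with matrix $C_0$, whose right-hand side is controlled by Proposition \ref{prop:intermediate2}-(ii). One local error along the way: the claim $\|(I+\delta\tilde D)^{-1}\|\le 1$, $\|(I+\alpha\tilde D)^{-1}\|\le 1$ is false in general, since $\Real\delta(z)$ can be negative for $z$ with large positive real part close to $\R^+$; positivity of the imaginary part only gives $|1+\delta\tilde d_j|>0$. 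The paper obtains the correct bound $\|(I+\alpha\tilde D)^{-1}\|\le |z|\,\dzz^{-1}$ by showing that $\Upsilon_j(z)=(-z(1+\alpha\tilde d_j))^{-1}$ is itself a Stieltjes transform of a probability measure on $\R^+$; this is fixable and does not change the shape of your first displayed estimate.

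The genuine gap is the step you defer to an "exact analogue of the stability statement already established in \cite{HLN07}": the quantitative lower bound on $\det(I_2-M_n(z))$ is the heart of the proof and is not an off-the-shelf consequence of the stability of the canonical system \eqref{eq:fundamental}. The entries of $M_n$ ($=C_0$) mix $T$-based quantities (built from the exact solution $\delta,\tilde\delta$) with $R$-based quantities (built from $\alpha=\frac1n\tr D\,\E Q$), and $\alpha,\tilde\alpha$ only \emph{approximately} satisfy the canonical equations. The paper first uses Cauchy--Schwarz to dominate each mixed entry by the geometric mean of a pure-$T$ and a pure-$R$ quantity, and an elementary inequality (Proposition \ref{prop:lower-estimate-det}) to reduce to lower bounds on $\det(I-C_1)$ and $\det(I-C_2)$. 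The pure-$T$ determinant can indeed be bounded below by positivity/tightness arguments as in \cite{HLN07}; but the pure-$R$ system carries perturbation terms $\Imm(\boldsymbol{\varepsilon})/\Imm(z)$ and $\Imm(z\boldsymbol{\tilde\varepsilon})/\Imm(z)$, which near the real axis are \emph{not} controlled by $|\boldsymbol{\varepsilon}|={\mathcal O}(n^{-1})$ alone: the paper needs a separate Schwarz-lemma argument (Lemma \ref{lm-bound-epsilon}, via Lemma \ref{lm-schwarz}) to show $|\Imm(\boldsymbol{\varepsilon})/\Imm(z)|\le n^{-1}\Phi(|z|)\Psi(\dzz^{-1})$. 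Even then the resulting lower bound $\det(I-C_2)\ge K\dzz^8(\eta^2+|z|^2)^{-4}\bigl(1-n^{-1}\Phi\Psi\bigr)$ is useful only on the region ${\mathcal E}_n$ where $n^{-1}\Phi(|z|)\Psi(\dzz^{-1})\le 1/2$; on the complement the paper does not invert the system at all, but uses the Haagerup--Thorbjornsen trick, combining the trivial bound $|\alpha-\delta|\le 2\boldsymbol{\ell^+}\boldsymbol{d_{\max}}\dzz^{-1}$ with the fact that $n^{-1}\Phi\Psi\ge 1/2$ there. Your proposal asserts a determinant bound valid for all $z\in\C-\R^+$, which is neither what the paper proves nor what the perturbed system supports for finite $n$; without the Schwarz-lemma control of the imaginary parts and the ${\mathcal E}_n$ dichotomy, the argument does not close.
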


Proposition \ref{prop:intermediate3} is proved in Section \ref{sec:proof3}.

Theorem \ref{th:main} is then easily proved using these three
propositions together with inequality $|x + y + z|^{2p} \le
K_p(|x|^{2p} + |y|^{2p} + |z|^{2p})$ and \eqref{eq:rule1}.

\section{Proof of Proposition \ref{prop:intermediate1}}\label{sec:proof1}

Recall the decomposition:
$$
u^*(Q-T)v= u^*(Q-\E Q)v+ u^*(\E Q-R)v+u^*(R-T)v\ .
$$

In this section, we establish the estimate:
\begin{equation}\label{eq:Q-EQ}
  \E \left| u^* \left( Q(z) -\E Q(z)\right) v\right|^{2p} \le
\frac{1}{n^p} \Phi_p(|z|)\Psi_p\left( \frac 1{\dzz}\right) \|u\|^{2p}\, \|v\|^{2p} \ ,\quad \forall z\in \C-\R^+\ . 
\end{equation}

\subsection{Reduction to unit vectors and quadratic forms}\label{sec:reduction}
Using a polarization identity, it is sufficient in order to establish
estimate \eqref{eq:Q-EQ} for the bilinear form $u^*(Q-\E Q)v$ to
establish the related estimate for the quadratic form $u^*(Q-\E Q)u$
and for unit vectors $\|u\|$ (just consider $u/\|u\|$ if necessary):
\begin{equation}\label{eq:Q-EQ-quadra}
  \E \left| u^* \left( Q(z) -\E Q(z)\right) u\right|^{2p} \le
\frac{1}{n^p} \Phi_p(|z|)\Psi_p\left( \frac 1{\dzz}\right)  \ . 
\end{equation}




\subsection{Martingale difference sequence and Burkholder inequality}
We first express the difference $u^* (Q -\E Q)u$ as the sum of
martingale difference sequences:
\begin{eqnarray*}
u^* (Q-\E Q)u &=& \sum_{j=1}^n (\E_j - \E_{j-1}) (u^* Q u)\quad =\quad \sum_{j=1}^n (\E_j - \E_{j-1}) (u^* (Q-Q_j) u)\\
&=&  - \sum_{j=1}^n (\E_j - \E_{j-1}) \left( \frac{u^* Q_j \eta_j^* \eta_j Q_j u}{1+\eta_j^* Q_j \eta_j}\right)
\quad \stackrel{\triangle}=\quad -\sum_{j=1}^n (\E_j - \E_{j-1}) \Gamma_j\ .
\end{eqnarray*}
One can easily check that $((\E_j - \E_{j-1}) \Gamma_j)$ is the sum of
a martingale difference sequence with respect to the filtration
$({\mathcal F}_j, j\le n)$; hence Burkholder's inequality yields:
\begin{multline}\label{eq:burholder}
\E \bigg| \sum_{j=1}^n (\E_j - \E_{j-1}) \Gamma_j 
\bigg|^{2p}\\ \le K 
\left(
\E \bigg(
\sum_{j=1}^n \E_{j-1} \left| (\E_j - \E_{j-1}) \Gamma_j \right|^2 \bigg)^p 
+
\sum_{j=1}^n \E \left| (\E_j - \E_{j-1}) \Gamma_j \right|^{2p}  
\right)\ .
\end{multline}
Recall the definition of $\Delta_j= \eta_j^* Q_j \eta_j - n^{-1}{\tilde d_j} \tr D Q_j - a_j^* Q_j a_j$.
In order to control the right-hand side of Burkholder's inequality, we
write $\Gamma_j$ as:
\begin{eqnarray*}
\Gamma_j &=& \frac{u^* Q_j \eta_j^* \eta_j Q_j u}{1+\eta_j^* Q_j \eta_j}
\quad =\quad \frac{u^* Q_j \eta_j^* \eta_j Q_j u}{1+\eta_j^* Q_j \eta_j}
\times \frac{1 + \frac{\tilde d_j}n \tr DQ_j +a_j^* Q_j a_j}
{1 + \frac{\tilde d_j}n \tr DQ_j +a_j^* Q_j a_j} \\
& = & \frac{u^* Q_j \eta_j^* \eta_j Q_j u}{1+\eta_j^* Q_j \eta_j} 
\times \frac{ 1+\eta_j^* Q_j \eta_j - \Delta_j}
{1 + \frac{\tilde d_j}n \tr DQ_j +a_j^* Q_j a_j}\quad \stackrel{\triangle}{=}\quad \Gamma_{1j} - \Gamma_{2j} \ ,
\end{eqnarray*}
where
\begin{equation}\label{eq:def-Gamma-split}
\Gamma_{1j}= \frac{u^* Q_j \eta_j \eta_j^* Q_j u}{1+\frac {\tilde d_j}n \tr DQ_j + a_j^* Q_j a_j}
\quad \textrm{and}\quad 
\Gamma_{2j}= \frac{\Gamma_j  \Delta_j}{1+\frac {\tilde d_j}n \tr DQ_j + a_j^* Q_j a_j}\ .
\end{equation}

In the following proposition, we establish relevant estimates.

\begin{prop}\label{prop:big-prop} Assume that the setting of Theorem 
  \ref{th:main} holds true. There exist nice polynomials $(\Phi_i, 1\le
  i\le 4)$ and $(\Psi_i, 1\le i\le 4)$ such that the following estimates 
hold true:
\begin{eqnarray}
  \E \bigg(
  \sum_{j=1}^n \E_{j-1} \left| (\E_j - \E_{j-1}) \Gamma_{1j} \right|^2 \bigg)^p 
&\le & \frac 1{n^p} \Phi_1(|z|) \Psi_1\left( \frac 1{\dzz}
  \right)\ ,\label{eq:bigprop1}\\
  \sum_{j=1}^n \E \left| (\E_j - \E_{j-1}) \Gamma_{1j} \right|^{2p}  
&\le & \frac 1{n^p}\Phi_2(|z|) \Psi_2\left( \frac 1{\dzz}
  \right)\ ,
\label{eq:bigprop2}\\ 
  \E \bigg(
  \sum_{j=1}^n \E_{j-1} \left| (\E_j - \E_{j-1}) \Gamma_{2j} \right|^2 \bigg)^p 
&\le & \frac 1{n^p}\Phi_3(|z|) \Psi_3\left( \frac 1{\dzz}
  \right)\ ,
\label{eq:bigprop3}\\
  \sum_{j=1}^n \E \left| (\E_j - \E_{j-1}) \Gamma_{2j} \right|^{2p}  
&=& \frac 1{n^p}\Phi_4(|z|) \Psi_4\left( \frac 1{\dzz}
  \right)
\ .\label{eq:bigprop4}
\end{eqnarray}
\end{prop}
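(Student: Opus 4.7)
The plan is to establish the four estimates in parallel, via a common centering trick with respect to $y_j$. The base observation is that for any integrable random variable $Z$ independent of $y_j$, $(\E_j - \E_{j-1}) Z = 0$; applied to $Z = \E_{y_j}\Gamma_{ij}$, which depends only on $(y_\ell)_{\ell\ne j}$, this yields
$$(\E_j - \E_{j-1})\Gamma_{ij} = (\E_j - \E_{j-1})(\Gamma_{ij} - \E_{y_j}\Gamma_{ij}),\qquad i=1,2,$$
and Jensen's inequality reduces the task to bounding $\E_{j-1}\E_{y_j}|\Gamma_{ij} - \E_{y_j}\Gamma_{ij}|^{2p}$, which vanishes to leading order in $n$.

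For $\Gamma_{1j}$ (estimates \eqref{eq:bigprop1} and \eqref{eq:bigprop2}) the denominator $1 + \frac{\tilde d_j}{n}\tr DQ_j + a_j^*Q_ja_j$ is $y_j$-independent, so only the numerator $u^*Q_j\eta_j\eta_j^*Q_ju$ needs centering. Writing $\eta_j = y_j + a_j$ and using $\E_{y_j}y_j = 0$ together with $\E_{y_j}(y_jy_j^*) = \frac{\tilde d_j}{n}D$, the centered numerator splits into a mean-zero quadratic form in $y_j$ and two linear cross-terms; Lemma \ref{lemma:approx-quadra} and \eqref{eq:property-ST} then yield
$$\E_{y_j}\bigl|\Gamma_{1j} - \E_{y_j}\Gamma_{1j}\bigr|^{2p} \le \frac{C|z|^{2p}}{n^{2p}\dzz^{6p}}\|u\|^{4p} + \frac{C|z|^{2p}}{n^p \dzz^{4p}}\|u\|^{2p}|a_j^*Q_j^* u|^{2p}.$$
Summing over $j$ produces the target $n^{-p}$ order: the first summand contributes $n\cdot n^{-2p}$, while the second is controlled via Lemma \ref{lemma:QiaaQi}, using \eqref{eq:QiaaQi2} for the conditional estimate \eqref{eq:bigprop1} and \eqref{eq:QiaaQi1} combined with the elementary interpolation $|a_j^*Q_j^* u|^{2p} \le (\boldsymbol{a_{\max}}\|u\|/\dzz)^{2p-4}|a_j^*Q_j^* u|^4$ for the $2p$-th moment sum \eqref{eq:bigprop2}.

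For $\Gamma_{2j} = \Gamma_j\Delta_j/(1 + \frac{\tilde d_j}{n}\tr DQ_j + a_j^*Q_ja_j)$ (estimates \eqref{eq:bigprop3} and \eqref{eq:bigprop4}) the same reduction applies. The key structural ingredients are the deterministic bound $|\Gamma_j| = |u^*(Q_j - Q)u| \le 2\|u\|^2/\dzz$ (from $\|Q\|,\|Q_j\|\le\dzz^{-1}$ and Sherman--Morrison) together with the $y_j$-centering $\E_{y_j}\Delta_j = 0$, which comes from the decomposition $\Delta_j = \Delta_j^{(1)} + \Delta_j^{(2)} + \Delta_j^{(3)}$ with $\Delta_j^{(1)} = y_j^*Q_jy_j - \frac{\tilde d_j}{n}\tr DQ_j$ a centered quadratic form and $\Delta_j^{(2)} = y_j^*Q_ja_j$, $\Delta_j^{(3)} = a_j^*Q_jy_j$ linear in $y_j$. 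Expanding also $|u^*Q_j\eta_j|^2$ and $(1+\eta_j^*Q_j\eta_j)^{-1}$ in $y_j$-monomials, $\Gamma_{2j} - \E_{y_j}\Gamma_{2j}$ reduces to a finite sum of products of polynomials in $y_j$ with $y_j$-independent factors; Lemma \ref{lemma:approx-quadra} then bounds each $\E_{y_j}|\cdot|^{2p}$ and Lemma \ref{lemma:QiaaQi} controls the resulting sums $\sum_j |a_j^*Q_j^* u|^{2k}$.

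The main obstacle will be the bookkeeping for $\Gamma_{2j}$: a naive Cauchy--Schwarz splitting $\E|\Gamma_j|^{2p}|\Delta_j|^{2p} \le (\E|\Gamma_j|^{4p})^{1/2}(\E|\Delta_j|^{4p})^{1/2}$ loses a factor of $n$ after summing over $j$, and the correct $n^{-p}$ order is only recovered by tracking carefully which $y_j$-monomials in the expansion of $\Gamma_j$ pair with those of $\Delta_j$ under $\E_{y_j}$ and which are killed by centering, together with the decisive role of Lemma \ref{lemma:QiaaQi} in preventing the $|a_j^*Q_j^* u|^{2k}$ terms from blowing up across the summation in $j$.
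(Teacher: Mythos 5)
Your proposal is correct and follows essentially the same route as the paper's own proof: split each $\Gamma_{ij}$ according to $\eta_j=y_j+a_j$, exploit that $y_j$-independent pieces are annihilated by $\E_j-\E_{j-1}$, bound the denominators in modulus via \eqref{eq:property-ST}, apply Lemma \ref{lemma:approx-quadra} and Corollary \ref{coro:quadra} termwise, and let Lemma \ref{lemma:QiaaQi} absorb the sums over $j$ of the $u^*Q_ja_ja_j^*Q_j^*u$ factors (with the rough bound \eqref{eq:rough-estimate} to reduce high powers to squares, exactly as in the paper). The only loose points are minor: the factor $(1+\eta_j^*Q_j\eta_j)^{-1}$ cannot literally be expanded in $y_j$-monomials and must instead be kept and bounded by $|z|/\dzz$ through \eqref{eq:property-ST} (which is what the paper does), your interpolation step is vacuous at $p=1$ where one applies \eqref{eq:QiaaQi2} directly, and it is in the $\Gamma_{2j}$ term containing $|y_j^*Q_ju|^{4p}|\Delta_j|^{2p}$ (estimate \eqref{eq:bigprop4}) that the assumption $\sup_n\E|X_{ij}^n|^{8p}<\infty$ is actually consumed.
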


It is now clear that the proof of Proposition \ref{prop:intermediate1} directly
follows from Burkholder's inequality together with the estimates of
Proposition \ref{prop:big-prop}. The rest of the section is devoted to
the proof of Proposition \ref{prop:big-prop}.

\subsection{Proof of Proposition \ref{prop:big-prop}: Estimates \eqref{eq:bigprop1} and
\eqref{eq:bigprop2}} 


We split $\Gamma_{1j}$ as $\Gamma_{1j}= \chi_{1j}+\chi_{2j} + \chi_{3j}$, where:
\begin{eqnarray*}
\chi_{1j}&=& \frac{u^* Q_j y_j y_j^* Q_j u}{1+\frac{ {\tilde d}_j}{n} \tr DQ_j +a_j^* Q_j a_j}\ ,\\ 
\chi_{2j}&=& \frac{y_j^* Q_j u u^* Q_j a_j}{1+\frac{ {\tilde d}_j}{n} \tr DQ_j +a_j^* Q_j a_j}
+\frac{a_j^* Q_j u u^* Q_j y_j}{1+\frac{ {\tilde d}_j}{n} \tr DQ_j +a_j^* Q_j a_j}\ ,\\ 
\chi_{3j}&=& \frac{u^* Q_j a_j a_j^* Q_j u}{1+\frac{ {\tilde d}_j}{n} \tr DQ_j +a_j^* Q_j a_j}\ .\\ 
\end{eqnarray*}
Notice that $(\E_j -\E_{j-1})(\chi_{3j}) =0$, hence $\chi_{3j}$ will play no further role in the sequel.
As $Q_j$ is independent from column $y_j$, we have:
\begin{equation}\label{eq:partial-chi1-zero}
(\E_j -\E_{j-1})(\chi_{1j}) = \frac{\tilde d_j}n \E_j\left( 
\frac{x_j^* D^{1/2} Q_j u u^* Q_j D^{1/2} x_j - \tr D Q_j u u^* Q_j }
{1+\frac{ {\tilde d}_j}{n} \tr DQ_j +a_j^* Q_j a_j}
\right) \ ,
\end{equation}
and
\begin{eqnarray}
\E_{j-1} \left| (\E_j -\E_{j-1})(\chi_{1j})
\right|^2 &\stackrel{(a)}\le& \frac{\boldsymbol{\tilde d}^{\,2}_{\boldsymbol{\max}}}{n^2}\times 
\E_{j-1}\left| 
\frac{x_j^* D^{1/2} Q_j u u^* Q_j D^{1/2} x_j - \tr D Q_j u u^* Q_j }
{1+\frac{ {\tilde d}_j}{n} \tr DQ_j +a_j^* Q_j a_j}
\right|^2\ ,\nonumber \\
&\stackrel{(b)}\le& \frac{\boldsymbol{\tilde d}^{\,2}_{\boldsymbol{\max}}}{n^2}
\frac{|z|^2}{\dzz^2}\times 
\E_{j-1}\left[ \E_{y_j}\left| 
x_j^* D^{1/2} Q_j u u^* Q_j D^{1/2} x_j - \tr D Q_j u u^* Q_j 
\right|^2 \right]\nonumber \\
&\stackrel{(c)}\le& K \frac{\boldsymbol{\tilde d}^{\,2}_{\boldsymbol{\max}}}{n^2}
\frac{|z|^2}{\dzz^2}\times 
\E_{j-1}\left( \tr D^{1/2} Q_j u u^* Q_j D^{1/2} D^{1/2} Q^*_j u u^* Q^*_j D^{1/2}\right)\nonumber \\
&=& {\mathcal O}\left( 
\frac{|z|^2}{n^2\, \dzz^6}
\right)\ ,\label{eq:partial-chi1}
\end{eqnarray}
where $(a)$ follows from Jensen's inequality, $(b)$ from estimate \eqref{eq:property-ST}, and $(c)$ 
from Lemma \ref{lemma:approx-quadra}. Thus
\begin{equation}\label{eq:estimate-chi1}
\E\bigg( \sum_{j=1}^n \E_{j-1} \left| (\E_j -\E_{j-1})(\chi_{1j})
\right|^2\bigg)^p \quad = \quad {\mathcal O}\left( 
\frac{|z|^{2p}}{n^p\, \dzz^{6p}}
\right)\ .
\end{equation}
We now turn to the contribution of $\chi_{2j}$. Arguments similar as 
previously yield:
\begin{eqnarray}
\E_{j-1} \left| (\E_j -\E_{j-1})(\chi_{2j}) \right|^2 
\! \!  \! \!      
&=&  
 \E_{j-1} \left| \E_j \chi_{2j} \right|^2
\leq 
\E_{j-1} \left| \chi_{2j} \right|^2 \nonumber \\ 
&\le &
\frac{2}{n} 
\E_{j-1} \left( 
\left| \frac{x_j^* D^{1/2} Q_j u u^* Q_j a_j}{1+\frac{ {\tilde d}_j}{n} \tr DQ_j +a_j^* Q_j a_j} 
\right|^2
+\left| \frac{a_j^* Q_j u u^* Q_j D^{1/2} x_j}{1+\frac{ {\tilde d}_j}{n} \tr DQ_j +a_j^* Q_j a_j} 
\right|^2
\right)\ ,\nonumber \\
&\le & \frac{2}{n} \frac{|z|^2}{\dzz^2}  
\E_{j-1} \left( \E_{y_j}(x_j^* D^{1/2} Q_j u  u^* Q_j^* D^{1/2} x_j)\times u^* Q_j a_j a_j^* Q_j^* u
\right.\nonumber \\
&&\left. \qquad \qquad \qquad \qquad + \E_{y_j}(x_j^* D^{1/2} Q_j^* u  u^* Q_j D^{1/2} x_j)\times 
u^* Q_j^* a_j a_j^* Q_j u
\right) \ ,\nonumber \\
&\le& \frac{K}{n} \frac{|z|^2}{\dzz^4} 
\left(   \E_{j-1} \left( u^* Q_j^* a_j a_j^* Q_j u \right)
+ \E_{j-1} \left( u^* Q_j a_j a_j^* Q^*_j u
\right) \right)\ .\label{eq:partial-chi2}
\end{eqnarray}
Now, using Eq. \eqref{eq:QiaaQi2} in Lemma \ref{lemma:QiaaQi} yields:
\begin{equation}\label{eq:estimate-chi2}
\E\bigg( \sum_{j=1}^n \E_{j-1} \left| (\E_j -\E_{j-1})(\chi_{2j})
\right|^2\bigg)^p \quad \le \quad  \frac 1{n^p} \Phi(|z|) \Psi\left( \frac 1\dzz\right).
\end{equation}
Hence, gathering \eqref{eq:estimate-chi1} and \eqref{eq:estimate-chi2}
yields estimate \eqref{eq:bigprop1}. 

We now establish estimate \eqref{eq:bigprop2}. As previously, consider
identity \eqref{eq:partial-chi1-zero}; take it this time to the power
$p$. Using the same arguments as for \eqref{eq:partial-chi1}, we obtain:
$$
\E \left| (\E_j -\E_{j-1})(\chi_{1j})
\right|^{2p}
\quad = \quad 
{\mathcal O}\left( 
\frac{|z|^{2p}}{n^{2p}\, \dzz^{6p}}
\right)\ ,
$$
hence: 
\begin{equation}\label{eq:estimate-chi1-bis}
\E \sum_{j=1}^n  \left| (\E_j -\E_{j-1})(\chi_{1j})
\right|^{2p} \quad = \quad {\mathcal O}\left( 
\frac{|z|^{2p}}{n^{2p-1}\, \dzz^{6p}}
\right)\ .
\end{equation}
Similarly, using the same arguments as in \eqref{eq:partial-chi2}, together with elementary manipulations, we obtain:
$$
\E_{j-1} \left| (\E_j -\E_{j-1})(\chi_{2j}) \right|^{2p} \le \frac{K}{n^p} \frac{|z|^{2p}}{\dzz^{4p}} 
\left(   \E_{j-1} \left( u^* Q_j^* a_j a_j^* Q_j u \right)^p
+ \E_{j-1} \left( u^* Q_j a_j a_j^* Q^*_j u
\right)^p \right)\ . 
$$
Due to the rough estimate \eqref{eq:rough-estimate}, we obtain
\begin{eqnarray*}
\E \left| (\E_j -\E_{j-1})(\chi_{2j}) \right|^{2p} 
&\le & \frac{K}{n^p} \frac{|z|^{2p}}{\dzz^{6p-4}} 
\left(   \E \left( u^* Q_j^* a_j a_j^* Q_j u \right)^2
+ \E \left( u^* Q_j a_j a_j^* Q^*_j u
\right)^2 \right)\ , 
\end{eqnarray*} 
which after summation, and the estimate obtained in Lemma \ref{lemma:QiaaQi}, yields:
\begin{equation}\label{eq:estimate-chi2-bis}
\E \sum_{j=1}^n  \left| (\E_j -\E_{j-1})(\chi_{2j})
\right|^{2p} \quad \le \quad \frac 1{n^p} \Phi'(|z|) \Psi'\left( \frac 1\dzz\right)\ ,
\end{equation}
where $\Phi'$ and $\Psi'$ are nice polynomials. Gathering 
\eqref{eq:estimate-chi1-bis} and \eqref{eq:estimate-chi2-bis} yields 
estimate \eqref{eq:bigprop2}.

\subsection{Proof of Proposition \ref{prop:big-prop}: Estimates \eqref{eq:bigprop3} and \eqref{eq:bigprop4}}
We split $\Gamma_{2j}$ as $\Gamma_{2j}= \chi_{1j}+\chi_{2j} + \chi_{3j}$, where:
\begin{eqnarray*}
\chi_{1j}&=& \Delta_j \times \frac{u^* Q_j a_j a_j^* Q_j u}{(1+\eta_j^* Q_j \eta_j)
(1+\frac{ {\tilde d}_j}{n} \tr DQ_j +a_j^* Q_j a_j)}\ ,\\ 
\chi_{2j}&=& \Delta_j \times \frac{u^* Q_j y_j y_j^* Q_j u}{(1+\eta_j^* Q_j \eta_j)
(1+\frac{ {\tilde d}_j}{n} \tr DQ_j +a_j^* Q_j a_j)}\ ,\\
\chi_{3j}&=& \Delta_j \times \frac{u^* Q_j y_j a_j^* Q_j u + u^* Q_j a_j y_j^* Q_j u}{(1+\eta_j^* Q_j \eta_j)
(1+\frac{ {\tilde d}_j}{n} \tr DQ_j +a_j^* Q_j a_j)}\ .\\
\end{eqnarray*}

Consider first:
\begin{eqnarray*}
\lefteqn{ \E_{j-1} \left| (\E_j - \E_{j-1})(\chi_{1j})\right|^2 \quad \le \quad  2 \E_{j-1}|\chi_{1j}|^2}\\
&\stackrel{(a)}\le & \frac{K|z|^4}{\dzz^4} \E_{j-1} \left| 
u^* Q_j a_j a_j^* Q_j u \left( 
y_j^* Q_j y_j -n^{-1}\tilde d_j \tr DQ_j 
\right)
\right|^2\\
&& \quad \quad +   \frac{K|z|^4}{\dzz^4} \E_{j-1} \left| 
u^* Q_j a_j a_j^* Q_j u \left( 
y_j^* Q_j a_j + a_j^* Q_j y_j
\right)
\right|^2\ ,\\
&\stackrel{(b)}\le & \frac{K|z|^4}{n^2\, \dzz^6}\E_{j-1}\left[ u^* Q_j a_j a_j^* Q_j^* u\  
\E_{y_j}  \left| x_j^* D^{1/2} Q_j D^{1/2} x_j - \tr DQ_j \right|^2 \right]\\
&& \quad \quad + \frac{K|z|^4}{n\, \dzz^6}\E_{j-1}\left[u^* Q_j a_j a_j^* Q_j^* u\   
\E_{y_j}  (x_j^* D^{1/2} Q_j a_j a_j^* Q_j^* D^{1/2} x_j ) \right]\\
&& \quad \quad \quad \quad \quad + \frac{K|z|^4}{n\, \dzz^6}\E_{j-1}\left[ u^* Q_j a_j a_j^* Q_j^* u\   
\E_{y_j}  (x_j^* D^{1/2} Q^*_j a_j a_j^* Q_j D^{1/2} x_j) \right]\\
&\stackrel{(c)}\le& \frac{K|z|^4}{n\, \dzz^8} \E_{j-1} (u^* Q_j a_j a_j^* Q_j^* u)\ ,
\end{eqnarray*}
where $(a)$ follows from \eqref{eq:property-ST}, $(b)$ from the fact
that $|u^* Q_j a_j a_j^* Q_j u|\le K \dzz^{-2}$ and $|u^* Q_j a_j a_j^* Q_j^* u|\le K
\dzz^{-2}$, and $(c)$ from Lemma \ref{lemma:approx-quadra}. 
From this and Lemma \ref{lemma:QiaaQi}, we deduce that:
\begin{equation}
\E\left( \sum_{j=1}^n \E_{j-1} \left| (\E_j - \E_{j-1})(\chi_{1j})\right|^2
\right)^p 
\quad \le \quad \frac 1{n^p} \Phi(|z|) \Psi\left( \frac 1\dzz\right)\ .
\end{equation}
Consider now:
\begin{eqnarray*}
 \E_{j-1} \left| (\E_j - \E_{j-1})(\chi_{2j})\right|^2 \quad \stackrel{(a)}{\le} \  2 \E_{j-1}|\chi_{2j}|^2
\quad \stackrel{(b)}\le \ \frac{K|z|^4}{\dzz^4} \E_{j-1} \left| 
y_j^* Q_j u\right|^4 \left|  
\Delta_j
\right|^2
\quad \stackrel{(c)}\le\  \frac{K|z|^4}{n^3\, \dzz^{10}}\ ,
\end{eqnarray*}
where $(a)$ follows from the triangle and Jensen's inequality, $(b)$
from \eqref{eq:property-ST} and $(c)$ from Cauchy-Schwarz inequality,
Lemma \ref{lemma:approx-quadra} and Corollary \ref{coro:quadra}. \\ 
Hence, 
$$
\E \left( 
\sum_{j=1}^n  \E_{j-1} \left| (\E_j - \E_{j-1})(\chi_{2j})\right|^2  
\right)^p \quad 
=\quad {\mathcal O}\left(\frac{|z|^{4p}}{n^{2p}\, \dzz^{10p}}\right)\ .
$$
Similarly, one can prove that:
$$
\E \left( \sum_{j=1}^n  \E_{j-1} \left| (\E_j - \E_{j-1})(\chi_{3j})\right|^2  
\right)^p \quad 
=\quad {\mathcal O}\left(\frac{|z|^{4p}}{n^p\, \dzz^{10p}}\right)\ .
$$ 
Gathering the previous results yields the bound:
$$
\E \left( 
\sum_{j=1}^n  \E_{j-1} \left| (\E_j - \E_{j-1})(\Gamma_{2j})\right|^2  
\right)^p \quad 
\le \quad 
\frac 1{n^p} \Phi'(|z|) \Psi'\left(\frac 1\dzz\right)\ .
$$
We now evaluate the second part of Burkholder's inequality (and may
re-use notations $\Phi$ and $\Psi$ for different polynomials).
\begin{eqnarray*}
\sum_{j=1}^n \E \left| (\E_j - \E_{j-1})(\chi_{1j})\right|^{2p} 
&\le & \frac{K|z|^{4p}}{\dzz^{4p}} \sum_{j=1}^n \E \left( 
u^* Q_j a_j a_j^* Q^*_j u\right)^{2p} 
\E_{y_j}  \left| \Delta_j\right|^{2p} \\
&\stackrel{(a)}\le& \frac{K|z|^{4p}}{n^p\, \dzz^{6p}}   \sum_{j=1}^n \E \left( 
u^* Q_j a_j a_j^* Q^*_j u\right)^{2}\left( 
u^* Q_j a_j a_j^* Q^*_j u\right)^{2p-2}\\
&\le& \frac{K|z|^{4p}}{n^p\, \dzz^{10p-4}}   \sum_{j=1}^n \E \left( 
u^* Q_j a_j a_j^* Q^*_j u\right)^{2}\\
&\le & \frac 1{n^p} \Phi(|z|) \Psi\left( \frac 1\dzz \right)\ ,
\end{eqnarray*}
where $(a)$ follows from Corollary \ref{coro:quadra} and the last estimate, from 
Lemma \ref{lemma:QiaaQi}. Similar computations yield:
\begin{eqnarray*}
\sum_{j=1}^n \E \left| (\E_j - \E_{j-1})(\chi_{2j})\right|^{2p} &\le & \frac 1{n^{3p-1}} \Phi'(|z|) \Psi'\left( \frac 1\dzz \right)\ ,\\
\sum_{j=1}^n \E \left| (\E_j - \E_{j-1})(\chi_{3j})\right|^{2p} &\le &
\frac 1{n^{2p-1}} \Phi''(|z|) \Psi''\left( \frac 1\dzz \right) \ , 
\end{eqnarray*}
the first of these inequalities requiring the assumption 
$\sup_n \E | X_{ij}^n |^{8p} < \infty$ in the statement of Theorem 
\ref{th:main}. 
Gathering these three results yields:
$$
\sum_{j=1}^n \E \left| (\E_j - \E_{j-1})(\Gamma_{2j})\right|^{2p} \quad \le \quad 
\frac 1{n^p} \tilde \Phi(|z|) \tilde \Psi\left( \frac 1\dzz \right)\ ,
$$
and Proposition \ref{prop:big-prop} is proved.

\section{Proof of Proposition \ref{prop:intermediate2}}\label{sec:proof2}


 Recall the decomposition:
 $$
 u^*(Q-T)v= u^*(Q-\E Q)v+ u^*(\E Q-R)v+u^*(R-T)v\ .
 $$
 In this section, we establish the estimate:
 $$
   \left| u^* \left( \E Q(z) - R(z)\right) v\right| \le
 \frac{1}{\sqrt n} \Phi(|z|)\Psi\left( \frac 1{\dzz}\right)\|u\|\, \|v\|\ , 
 $$

The argument referred to in Section \eqref{sec:reduction} still holds
true here; therefore it is sufficient to establish, for $z\in \C-\R^+$ and for a unit vector $u$:
\begin{equation}\label{eq:EQ-R}
  \left| u^* \left( \E Q(z) - R(z)\right) u\right| \le
\frac{1}{\sqrt n} \Phi(|z|)\Psi\left( \frac 1{\dzz}\right)\ , 
\end{equation}
Recalling that 
$
R= \left[ -z(I + \tilde \alpha D) + A (I + \alpha \tilde D)^{-1} A^* \right]^{-1}
$,
the resolvent identity yields:
\begin{eqnarray*}
  u^* (R-Q) u &=& u^* R(Q^{-1} - R ^{-1})Q u\ , \\
  &=& u^* R\left(\Sigma \Sigma^* -A (I + \alpha \tilde D)^{-1} A^*\right)Q u  + z\tilde \alpha u^* R D Q u\ ,\\
  &=& u^* R\left(\sum_{j=1}^n \eta_j \eta_j^* -\sum_{j=1}^n \frac{a_j a_j^* }{1+\alpha \tilde d_j}\right)Q u  
  + z\tilde \alpha u^* R D Q u\ ,\\
  &\stackrel{(a)}=& \sum_{j=1}^n \frac{ u^* R\eta_j \eta_j^* Q_j u}{1+\eta_j^* Q_j \eta_j} 
-  \sum_{j=1}^n \frac{ u^* R a_j a_j^* Q_j u}{1+\alpha \tilde d_j} \\
 && \qquad + \sum_{j=1}^n \frac{ u^* Ra_j a_j^* Q_j \eta_j \eta_j^* Q_j u}{(1+\eta_j^* Q_j \eta_j)(1+\alpha \tilde d_j)} 
  - \sum_{j=1}^n \frac{\tilde d_j}n \E\left( \frac 1{1+\eta_j^* Q_j \eta_j}\right) u^* R D Q u\ ,\\
&\stackrel{\triangle}{=}& \sum_{j=1}^n Z_j\ .
\end{eqnarray*}
where $(a)$ follows from \eqref{eq:woodbury} and \eqref{eq:woodbury4},
together with the mere definition of $\tilde \alpha$. 

As usual, we now write $\eta_j=y_j + a_j$, group the terms that
compensate one another and split $Z_j$ accordingly:
$$
Z_j=Z_{1j} + Z_{2j} + Z_{3j} + Z_{4j}\ , 
$$
where
\begin{eqnarray*}
  Z_{1j}&=& \frac{ y_j^* Q_j u u^* R y_j}{1+\eta_j^* Q_j \eta_j} 
  -\frac{ \tilde d_j}n \E\left( \frac 1{1+\eta_j^* Q_j \eta_j}\right) u^* R D Q u\ ,\\
  Z_{2j}&=& \frac{(\alpha \tilde d_j - y_j^* Q_j y_j) u^* R a_j a_j^* Q_j u}
{(1+\eta_j^* Q_j \eta_j)(1+\alpha \tilde d_j)}\ ,\\ 
  Z_{3j}&=& \frac{y_j^* Q_j u a_j^* Q_j y_j \times u^* R a_j} 
{(1+\eta_j^* Q_j \eta_j)(1+\alpha \tilde d_j)}\ ,\\ 
  Z_{4j}&=& \frac{u^* R y_j a_j^* Q_j u + u^* R a_j y_j^* Q_j u}{1+\eta_j^* Q_j \eta_j}\\
&& \quad - \frac{
y_j^* Q_j a_j u^* R a_j a_j^* Q_j u 
+ a_j^* Q_j y_j u^* R a_j a_j^* Q_j u 
}{(1+\eta_j^* Q_j \eta_j)(1+\alpha \tilde d_j)}\\
&&\quad \quad + \frac{
u^* R a_j a_j^* Q_j a_j y_j^* Q_j u 
+ u^* R a_j a_j^* Q_j y_j a_j ^* Q_j u 
}{(1+\eta_j^* Q_j \eta_j)(1+\alpha \tilde d_j)}
\end{eqnarray*}

Now, the estimate \eqref{eq:EQ-R} immediately follows from similar
estimates for the terms $\E \sum_{j=1}^n Z_{\ell j}$, $1\le
\ell\le 4$. The rest of the section is devoted to establish such estimates.

\subsection{Convergence to zero of $\sum_j \E Z_{1j}$} 
We have
\begin{eqnarray*}
\E Z_{1j} &=& 
\E\left( \frac{ y_j^* Q_j u u^* R y_j}{1+\eta_j^* Q_j \eta_j} \right)
-\frac{ \tilde d_j}n \E\left( \frac 1{1+\eta_j^* Q_j \eta_j}\right) \E( u^* R D Q u)\\
&=& \E \left[ 
\left( \frac{ y_j^* Q_j u u^* R y_j}{1+\eta_j^* Q_j \eta_j} \right)
 - \frac{\tilde d_j}n  \left( \frac{u^* RDQ_j u} {1+\eta_j^* Q_j \eta_j} \right) \right]\\
&& \quad + \frac{\tilde d_j}n \left[
\E \left( \frac{u^* R D Q_j u }{1+\eta_j^* Q_j \eta_j}\right) - \E \left( \frac 1{1+\eta_j^* Q_j \eta_j}
\right)\E (u^* RDQ_j u) 
\right] \\
&& \qquad + \frac{\tilde d_j}n \E \left( \frac 1{1+\eta_j^* Q_j \eta_j}
\right)\E (u^* RD(Q_j-Q) u) \\
&\stackrel{\triangle}= & \chi_{1j} + \chi_{2j }+ \chi_{3j}\ .
\end{eqnarray*}
We first handle $\chi_{ij}$. Recall that $\Delta_j= \eta_j^* Q_j
\eta_j - n^{-1}{\tilde d_j} \tr D Q_j - a_j^* Q_j a_j$. Since 
$\E_{y_j} (y_j^* Q_j u u^* R y_j) = \tilde d_j n^{-1} u^* R D Q_j u $, we get:
\begin{eqnarray*}
\chi_{1j} &= & 
\E \left[ 
\left( \frac{ y_j^* Q_j u u^* R y_j}{1+\eta_j^* Q_j \eta_j} \right)
 - \frac{\tilde d_j}n  \left( \frac{u^* RDQ_j u} {1+\eta_j^* Q_j \eta_j} \right) \right] \ ,\\
&=& 
\E \left[ 
\left( \frac 1{1+\eta_j^* Q_j \eta_j} - \frac 1{1+\frac{\tilde d_j} n \tr D Q_j + a_j^* Q_j a_j} \right)
\left(  y_j^* Q_j u u^* R y_j
 - \frac{\tilde d_j}n  ( u^* RDQ_j u) \right) \right]\ ,\\
&=& \E \left[
\Delta_j \frac{y_j^* Q_j u u^* R y_j
 - \frac{\tilde d_j}n  ( u^* RDQ_j u)}{(1+\eta_j^* Q_j \eta_j)(1+\frac{\tilde d_j} n \tr D Q_j + a_j^* Q_j a_j)}
\right]\ .
\end{eqnarray*}
Hence,
\begin{eqnarray*}
\left| \chi_{1j}\right| & \le & \frac{|z|^2}{\dzz^2} \sqrt{\E |\Delta_j|^2}\left[ 
\E \left|
 y_j^* Q_j u u^* R y_j
 - \frac{\tilde d_j}n  ( u^* RDQ_j u)
\right|^2
\right]^{1/2}\ , \\
&\le & \frac{|z|^2}{\dzz^2} \times \frac 1{\sqrt{n}\dzz} \times \frac 1 {n \dzz^2} \quad 
=\quad  {\mathcal O}\left( 
\frac{|z|^2 }{n^{3/2}\dzz^5}
\right)\ .
\end{eqnarray*}
Summing over $j$ yields the estimate $\sum_j |\chi_{1j} | = {\mathcal O} \left( 
|z|^2 n^{-1/2}\dzz^{-5}
\right)$.

We now handle $\chi_{2j}$. Using the inequality $\mathrm{cov}(XY)\le \sqrt{\mathrm{var}(X) \mathrm{var}(Y)}$, 
we get:
\begin{eqnarray*}
|\chi_{2j} | &\le & \frac Kn \frac {|z|}{\dzz} \sqrt{\E \left| u^* RD (Q_j -\E Q_j)u\right|^2} 
\end{eqnarray*}
Hence, applying Proposition \ref{prop:intermediate1} to $\left| u^* RD
  (Q_j -\E Q_j)u\right|^2$ and summing over $j$ yields the estimate
$\sum_j |\chi_{2j} | =n^{-1/2} \Phi(|z|)\Psi(\dzz^{-1})$.

Let us now handle the term $\chi_{3j}$. Using the decomposition of $Q_j -Q$, Schwarz inequality and the fact 
that $\sqrt{ab}\le 2^{-1}(a+b)$ yields 
\begin{eqnarray}
\left| \chi_{3j}\right| & =& \left|\frac{\tilde d_j}n \E \left( \frac 1{1+\eta_j^* Q_j \eta_j}
\right)\E (u^* RD(Q_j-Q) u)\right| \ ,\nonumber \\
&\le & \frac Kn \frac{|z|^2}{\dzz^2}\left( \E | u^* RD Q_j \eta_j|^2 + \E |\eta_j^* Q_j u |^2\right) \ . \label{eq:chi3}
\end{eqnarray} 
Now, as:
\begin{eqnarray*}
\E | u^* RD Q_j \eta_j|^2 &=& \E u^* RD Q_j y_j y_j^* Q_j^* D R^* u + \E u^* RD Q_j a_j a_j^* Q_j^* D R^* u\ ,  \\
\E |\eta_j^* Q_j u |^2 &=&  \E u^* Q_j^* y_j y_j^* Q_j u + \E u^* Q_j^* a_j a_j^* Q_j u   \ , \\
\end{eqnarray*}
it remains to sum over $j$ and to apply Lemma \ref{lemma:QiaaQi} to
get the estimate $\sum_j |\chi_{3j}| = n^{-1} \Phi(|z|)
\Psi(\dzz^{-1})$. Gathering the partial estimates yields:
\begin{equation}\label{eq:Z1-final}
\bigg|  
\E \sum_j Z_{1j} \bigg| 
\le \frac{\Phi(|z|) \Psi(\dzz^{-1})}{\sqrt{n}}\ .
\end{equation}

\subsection{Convergence to zero of $\sum_j \E Z_{2j}$}
Recall that 
$$
Z_{2j}= \frac{(\alpha \tilde d_j - y_j^* Q_j y_j) u^* R a_j a_j^* Q_j u}
{(1+\eta_j^* Q_j \eta_j)(1+\alpha \tilde d_j)}\ .
$$
We have:
\begin{eqnarray}
| \E Z_{2j}| &\stackrel{(a)}\le & \frac{|z|^2}{\dzz^2} |u^* R a_j| \E\left| 
(\alpha \tilde d_j - y_j^* Q_j y_j) a_j^* Q_j u
\right|\nonumber \\
& \le &
\frac{|z|^2}{\dzz^2} |u^* R a_j| \sqrt{\E |a_j^* Q_j u|^2} \sqrt{\E\left| 
\alpha \tilde d_j - y_j^* Q_j y_j\right|^2}  \nonumber \\
&\le & \frac{|z|^2}{\dzz^2} \left( \frac{
u^* R a_j a_j^* R u + \E u^* Q_j a_j a_j^* Q_j u 
} 2\right)\sqrt{\E\left| 
\alpha \tilde d_j - y_j^* Q_j y_j\right|^2}  \ ,\label{eq:Z2}
\end{eqnarray}
where $(a)$ follows from \eqref{eq:property-ST}. In order to estimate
the remaining square root, we decompose the difference as:
$$
\alpha \tilde d_j - y_j^* Q_j y_j = \frac{\tilde d_j}n \tr D(\E Q -Q) + \frac{\tilde d_j}n  \tr D(Q - Q_j)
+ \frac{\tilde d_j}n \tr D Q_j - y_j^* Q_j y_j\ .
$$
Hence, 
\begin{multline*}
\E |\alpha \tilde d_j - y_j^* Q_j y_j|^2  \\
\le  K\left( 
\frac 1{n^2} \E\left|
\tr D(\E Q - Q)\right|^2 
+ \frac 1{n^2} \E | \tr D(Q -Q_j)|^2  
+ \E\left| \frac{\tilde d_j}n \tr D Q_j - y_j^* Q_j y_j\right|^2 \right)\ .
\end{multline*}

Writing $\E | n^{-1} \tr D(Q -\E Q) |^2 \le \boldsymbol{\ell^+}
\sup_{j\le n} \E |e_j^* D (Q - \E Q) e_j |^2$ where $e_j$ represents
canonical vector number $j$ and using the result of Section
\ref{sec:proof1}, the first term of the right hand side is of order
$n^{-1} \Phi(|z|) \Psi(\dzz^{-1})$. The second term is of order $(n\dzz)^{-2}$
(minor modification of \cite[Lemma 2.6]{SilBai95} to encompass the
case $\Real(z) < 0$). Finally, the third term is of order $n^{-1} \dzz^{-2}$ by
Lemma \ref{lemma:approx-quadra}. Collecting these results, we obtain:
\begin{eqnarray*}
\sqrt{\E |\alpha \tilde d_j - y_j^* Q_j y_j|^2} &\le & \frac K{\sqrt{n} }
\left( 
\Phi_1 \Psi_1 +\frac{\Phi_2 \Psi_2 }n 
+\Phi_3 \Psi_3 
\right)^{1/2} \\
&\le & \frac K{\sqrt{n} } 
\left( 
\Phi_1 \Psi_1 +\Phi_2 \Psi_2
+\Phi_3 \Psi_3
\right)^{1/2}\\
&\stackrel{(a)}\le&  \frac K{\sqrt{n} } \sqrt{\tilde \Phi \tilde \Psi} \quad \stackrel{(b)}\le 
\frac K{\sqrt{n} } \Phi \Psi\ , 
\end{eqnarray*}
where the $\Phi$'s are nice polynomials with argument $|z|$ and the
$\Psi$'s are nice polynomials with argument $|\dzz^{-1}|$, and where
$(a)$ follows from \eqref{eq:rule1} and $(b)$ from
\eqref{eq:rule2}. It remains to plug this estimate into \eqref{eq:Z2},
to sum over $j$ and to use Assumption \ref{ass:A} together with Lemma
\ref{lemma:QiaaQi} to obtain:
\begin{eqnarray}
\bigg| \E \sum_{j=1}^n Z_{2j} \bigg| 
&\le & \frac{K |z|^2}{\sqrt{n} \dzz^2} \left( u^* R A A^* R u + \sum_{j=1}^n\E u^* Q_j a_j a_j^* Q_j u \right)
\Phi(|z|) \Psi (\dzz^{-1})\ , \nonumber \\
&\le & \frac 1{\sqrt{n}} \Phi'(|z|) \Psi' (\dzz^{-1})\ .
\label{eq:Z2-final}
\end{eqnarray}

\subsection{Convergence to zero of $\sum_j \E Z_{3j}$}
Recall that 
$$
Z_{3j}= \frac{y_j^* Q_j u a_j^* Q_j y_j \times u^* R a_j } 
{(1+\eta_j^* Q_j \eta_j)(1+\alpha \tilde d_j)}\ .
$$
We have:
\begin{eqnarray*}
\E |Z_{3j}| &\stackrel{(a)}\le & \frac{|z|^2}{\dzz^2} |u^* R a_j|\times  \E |y_j^* Q_j u a_j^* Q_j y_j| \quad 
 \le \quad \frac{|z|^2}{\dzz^2} |u^* R a_j| \sqrt{\E |y_j^* Q_j u|^2 \E | a_j^* Q_j y_j|^2}\\
&\stackrel{(b)}\le & \frac Kn \frac{|z|^2}{\dzz^4} |u^* R a_j| \ ,
\end{eqnarray*}
where $(a)$ follows from \eqref{eq:property-ST}, and $(b)$ from Lemma \ref{lemma:approx-quadra}. Hence, 
\begin{eqnarray}
\left| \sum_{j=1}^n \E Z_{3j} \right|& \le& \frac Kn \frac{|z|^2}{\dzz^4} \sum_{j=1}^n |u^* R a_j|\nonumber \\
& \le& \frac Kn \frac{|z|^2}{\dzz^4} \sqrt{n}\times \sqrt{ \sum_{j=1}^n u^* R a_j  a_j^* R^* u} \quad
=\quad {\mathcal O}\left( \frac{|z|^2}{\sqrt{n}\, \dzz^5}\right) \ .\label{eq:Z3-final}
\end{eqnarray}
 
\subsection{Convergence to zero of $\sum_j \E Z_{4j}$}
Write $Z_{4j}$ as
$$
Z_{4j}= \frac{ W_{4j}}{(1+\eta_j^* Q_j \eta_j)(1+\alpha \tilde d_j)}
$$
with
\begin{multline*}
W_{4j}  = (1+\alpha \tilde d_j)(u^* R y_j a_j^* Q_j u + u^* R a_j y_j^* Q_j u) \\
- y_j^* Q_j a_j u^* R a_j a_j^* Q_j u 
- a_j^* Q_j y_j u^* R a_j a_j^* Q_j u \\
+ u^* R a_j a_j^* Q_j a_j y_j^* Q_j u 
+ u^* R a_j a_j^* Q_j y_j a_j ^* Q_j u 
\end{multline*}
Write 
$$
\frac 1{1+\eta_j^* Q_j \eta_j} = \frac 1{ 1+\frac{\tilde d_j}n \tr DQ_j + a_j^* Q_j a_j}
- \frac{\Delta_j}{(1+\eta_j^* Q_j \eta_j)(1+\frac{\tilde d_j}n \tr DQ_j + a_j^* Q_j a_j)}\ .
$$
Plugging this identity into $Z_{4j}$ and taking into account the fact
that $\E_{y_j} W_{4j}=0$, we obtain:
\begin{eqnarray*}
\left| \E Z_{4j} \right| &=& \left| 
\E\left(
\frac {\Delta_j W_{4j}}{(1+\alpha \tilde d_j)(1+\eta_j^* Q_j \eta_j)(1+\frac{\tilde d_j}n \tr DQ_j + a_j^* Q_j a_j)} 
\right)
\right|\\
&\le & \frac{|z|^3}{\dzz^3} \sqrt{\E |\Delta_j|^2} \sqrt{\E|W_{4j}|^2}
\quad \le \quad  \frac K{\sqrt{n}} \frac{|z|^3}{\dzz^4} \sqrt{\E|W_{4j}|^2}\ .
\end{eqnarray*}
Hence, 
\begin{equation}\label{eq:Z4}
\left|\E \sum_j Z_{4j}\right| \le \frac K{\sqrt{n}} \frac{|z|^3}{\dzz^4} \sum_j \sqrt{\E|W_{4j}|^2}
\le \frac{K |z|^3}{\dzz^4} \sqrt{\sum_j \E|W_{4j}|^2}\ .
\end{equation}
We therefore estimate $\sum_j \E|W_{4j}|^2$. First, write:
\begin{eqnarray*}
\E|W_{4j}|^2 &\le & \frac Kn \left( 1 + \frac 1\dzz\right)^2 
\left( 
\E |a_j^* Q_j u|^2 u^* R D R^* u + |u^* R a_j| ^2 \E( u^* Q_j^* D Q_j u)  \right)\\
&&\quad + \frac Kn |u^* R a_j|^2 
\E\left[ |a_j^* Q_j u|^2 \left( a_j^* Q_j^* D Q_j a_j + a_j^* Q_j D Q_j^* a_j \right)
\right]\\
&&\qquad  + \frac Kn |u^* R a_j|^2 
\E \left(
|a_j^* Q_j a_j|^2 u^* Q_j^* D Q_j u + |a_j^* Q_j u|^2 a_j^* Q_j D Q_j a_j\ . 
\right)
\end{eqnarray*} 
Now, summing over $j$ yields:
\begin{eqnarray*}
\sum_{j=1}^n \E|W_{4j}|^2 &\le & \frac Kn \left( \sum_{j=1}^n \E ( u^* Q_j^* a_j a_j^* Q_j u)\right)
\left( 1 + \frac 1\dzz\right) \frac 1{\dzz^2}\\
&& +  \frac Kn \left( \sum_{j=1}^n \E ( u^* R a_j a_j^* R^* u)\right)
\left( \frac 1{\dzz^4} + \frac 1{\dzz^2}\left( 1+\frac 1\dzz \right) \right)\\
&\le& \frac 1n \Phi(|z|)\Psi(\dzz^{-1})\ .
\end{eqnarray*}
Plugging this into \eqref{eq:Z4} yields the estimate 
\begin{equation}\label{eq:Z4-final}
\left|\E \sum_j Z_{4j}\right| \le \frac 1{\sqrt{n}} \Phi'(|z|) \Psi'(\dzz^{-1})\ .
\end{equation}

\subsection{End of proof} Recall that:
$$
\left| u^* (R-\E Q) u\right| \quad \le 
\quad 
\bigg| \E \sum_{j=1}^n Z_{1 j} 
\bigg|   + 
\bigg| \E \sum_{j=1}^n Z_{2 j}  
\bigg|   +
\bigg| \E \sum_{j=1}^n Z_{3 j}  
\bigg|   +\bigg| \E \sum_{j=1}^n Z_{4 j}  
\bigg|\ .
$$
It remains to gather estimates \eqref{eq:Z1-final}, \eqref{eq:Z2-final}, \eqref{eq:Z3-final} and \eqref{eq:Z4-final}
to get the desired estimate:
$$
\left| u^* (R-\E Q) u\right|\quad \le \quad \frac 1{\sqrt{n}} \Phi(|z|) \Psi(\dzz^{-1})\ .
$$ 

\section{Proof of Proposition \ref{prop:intermediate3}}\label{sec:proof3}


Recall the decomposition:
$$
u^*(Q-T)v= u^*(Q-\E Q)v+ u^*(\E Q-R)v+u^*(R-T)v\ .
$$
As mentioned in Section \ref{sec:reduction}, it is sufficient to
establish the estimate:
\begin{equation}\label{eq:R-T}
  \left| u^* \left( R(z) -T(z)\right) u\right| \le
\frac{1}{n} \Phi(|z|)\Psi\left( \frac 1{\dzz}\right)\ , 
\end{equation}
for $z\in \C-\R^+$ in the case where $u$ has norm one. 

\subsection{The estimate for $u^* (R-T)u$} 
Recall
the definitions of $\delta,\tilde \delta$ \eqref{eq:fundamental}, $\alpha,
\tilde \alpha$ \eqref{eq:def-alpha} and $R,\tilde R$
(\ref{eq:def-R}-\ref{eq:def-Rtilde}).
Using twice the resolvent identity yields:
\begin{equation}\label{eq:resolvent-determinist}
u^*(R-T) u = (\tilde \alpha - \tilde \delta)  \kappa_1  + (\alpha - \delta) \kappa_2\ ,
\end{equation}
where
$$
\left\{
\begin{array}{l}
\kappa_1 = z u^* RDT u\\
\kappa_2 = u^* RA(I+\alpha \tilde D) ^{-1} \tilde D (I+ \delta \tilde D)^{-1} A^* T u
\end{array}
\right. \ .
$$
The following bounds are straightforward:
$$
|\kappa_1| \le \frac{|z|   \boldsymbol{\tilde d_{\max}}}{\dzz^2}
\quad \textrm{and}\quad 
|\kappa_2| \le \frac{ \|A\|^2   \boldsymbol{\tilde d_{\max}}}{\dzz^2}\times  \|(I + \alpha\tilde D)^{-1}\| \times 
\|(I + \delta \tilde D)^{-1}\| \ .
$$
It remains to control the spectral norms of $(I + \alpha\tilde
D)^{-1}$ and $(I + \delta \tilde D)^{-1}$.  Recall that $\alpha$ is the
Stieltjes transform of a positive measure with support included in $\R^+$. This
in particular implies that $\Imm(z\alpha)>0$ for $z\in \Cplus$. One can 
check that 
\[
\Upsilon_j(z) = \frac 1{-z(1+\alpha \tilde d_j)}
\] 
is analytic and satisfies $\Imm(\Upsilon_j) > 0$ and $\Imm(z \Upsilon_j) > 0$ 
on $\Cplus$ and that $\lim_{y\rightarrow\infty} (-\ii y \Upsilon_j(\ii y)) 
=1$. As a consequence, $\Upsilon_j$ is the Stieltjes transform of a
probability measure with support included in $\R^+$ 
(see \emph{e.g.} \cite[Prop. 2.2(2)]{HLN07}). In particular,
$$
\left| \Upsilon_j(z)\right|\le \frac 1{\dzz}\quad\textrm{for}\quad 1\le j\le n\ ,
$$
which readily implies that $\|(I + \alpha\tilde D)^{-1}\|\le |z| \dzz^{-1}$.
The same argument applies for $\|(I + \delta \tilde D)^{-1}\|$. Finally,
$$
|\kappa_2|\le \frac{|z|^2  \|A\|^2   \boldsymbol{\tilde d_{\max}}}{\dzz^4}\ .
$$
In view of the estimates obtained for $\kappa_1$ and $\kappa_2$, it is
sufficient, in order to establish \eqref{eq:R-T}, to obtain estimates
for $\alpha - \delta$ and $\tilde \alpha - \tilde \delta$. Assume that
the following estimate holds true:
\begin{equation}\label{eq:estimate-alpha-m}
  \forall z\in \C- \R^+\ ,\quad 
  \max\left( |\alpha - \delta|, |\tilde \alpha - \tilde \delta| \right) 
\le \frac 1n \Phi(|z|) \Psi\left( \frac 1{\dzz}\right)\ , 
\end{equation}
where $\Phi$ and $\Psi$ are nice polynomials. Then, plugging
\eqref{eq:estimate-alpha-m} into \eqref{eq:resolvent-determinist}
immediately yields the desired result \eqref{eq:R-T}.

The rest of the section is devoted to establish \eqref{eq:estimate-alpha-m}.

\subsection{Auxiliary estimates over $(\alpha- \delta)$ and $(\tilde \alpha - \tilde
  \delta)$}

Writing $\alpha = n^{-1} \tr DR +n^{-1} \tr D(\E Q -R)$ and
$\delta=n^{-1} \tr DT$, the difference $\alpha-\delta$ expresses as
$n^{-1} \tr D(R-T) + n^{-1} \tr D(\E Q -R)$. Now using the resolvent
identity $R-T=-R(R^{-1} - T^{-1})T$ and performing the same
computation for the tilded quantities yields the following system of
equations:
\begin{equation}\label{eq:system-C0}
\left( 
\begin{array}{c}
\alpha-\delta\\
\tilde \alpha - \tilde \delta
\end{array}
\right) = C_0 
\left( 
\begin{array}{c}
\alpha-\delta\\
\tilde \alpha - \tilde \delta
\end{array}
\right) + 
\left( 
\begin{array}{c}
\boldsymbol{\varepsilon}\\
\boldsymbol{\tilde \varepsilon}
\end{array}
\right)\quad \textrm{where}\quad 
C_0= 
\left( 
\begin{array}{cc}
u_0 & zv_0\\
z\tilde v_0 & \tilde u_0
\end{array}
\right)\ ,
\end{equation}
the coefficients being defined as:
\begin{equation}\label{eq:definitions}
\left\{
\begin{array}{ccl}
u_0 &=& \frac 1n \tr D^{1/2} RA (I+\alpha \tilde D)^{-1} \tilde D (I+\delta \tilde D)^{-1} A^* T D^{1/2}\\
\tilde u_0 &=& \frac 1n \tr \tilde D^{1/2} \tilde R A^* (I+\tilde \alpha D)^{-1} D 
(I+\tilde \delta D)^{-1} A \tilde T \tilde D^{1/2}
\\
v_0&=& \frac 1n \tr DRDT \\
\tilde v_0&=& \frac 1n \tr \tilde D \tilde R\tilde D \tilde T\\
\end{array}
\right.\ ,
\end{equation}
and the quantities $\boldsymbol{\varepsilon}$ and $\boldsymbol{\tilde \varepsilon}$ being given by:
\begin{equation}\label{eq:epsilons}
\boldsymbol{\varepsilon}\ =\ \frac 1n \tr D(\E Q -R)\quad \textrm{and} \quad 
\boldsymbol{\tilde \varepsilon}\ =\ \frac 1n \tr \tilde D(\E \tilde Q -\tilde R)\ .
\end{equation}

The general idea, in order to transfer the estimates over
$\boldsymbol{\varepsilon}$ and $\boldsymbol{\tilde \varepsilon}$ (as
provided in Proposition \ref{prop:intermediate2}-(ii) ), to $\alpha-\delta$ and
$\tilde \alpha -\tilde \delta$, is to obtain an estimate over
$1/\det(I-C_0)$, and then to solve the system \eqref{eq:system-C0}.

\subsubsection*{Lower bound for $\det(I-C_0)$}  The mere definition of $I-C_0$ yields 
\begin{eqnarray*}
|\det(I-C_0)| &=& \left| (1-u_0)(1-\tilde u_0) -z^2 v_0 \tilde v_0 \right|\\
&\ge& (1-|u_0|)\times (1-|\tilde u_0|) -|z|^2 |v_0|\times  |\tilde v_0| 
\end{eqnarray*}
In order to control the quantities $u_0,\tilde u_0, v_0$ and $\tilde
v_0$, we shall use the following inequality:
\begin{equation}\label{eq:schwarz}
\left| \tr A B^* \right| \le \left( \tr A A^* \right)^{1/2} \times
\left( \tr B B^* \right)^{1/2}\ ,
\end{equation}
together with the following quantities:
\[
\left\{
\begin{array}{ccl}
u_1 &=& \frac 1n \tr D TA (I+\delta^* \tilde D)^{-1} \tilde D (I+\delta \tilde D)^{-1} A^* T^* \\
\tilde u_1 &=& \frac 1n \tr \tilde D \tilde T A^* (I+\tilde \delta D)^{-1} D 
(I+\tilde \delta^* D)^{-1} A \tilde T^* 
\\
v_1&=& \frac 1n \tr DTDT^* \\
\tilde v_1&=& \frac 1n \tr \tilde D \tilde T\tilde D \tilde T^*\\
\end{array}
\right.
\] 
and
\begin{equation}
\label{eq-C2} 
\left\{
\begin{array}{ccl}
u_2 &=& \frac 1n \tr D RA (I+\alpha^* \tilde D)^{-1} \tilde D (I+\alpha \tilde D)^{-1} A^* R^* \\
\tilde{u}_2 &=& \frac 1n \tr \tilde D \tilde R A^* (I+\tilde \alpha D)^{-1} D 
(I+\tilde \alpha^* D)^{-1} A \tilde R^* 
\\
v_2&=& \frac 1n \tr DRDR^* \\
\tilde{v}_2&=& \frac 1n \tr \tilde D \tilde R\tilde D \tilde R^*\\
\end{array}
\right.
\end{equation} 
Using \eqref{eq:schwarz} together with identity $(I+\delta \tilde D)^{-1}A^*
T = \tilde T A^* (I+\tilde \delta D)^{-1}$ (and similar ones for related
quantities), we obtain:
$$
|u_0| \le (\tilde u_1 u_2)^{1/2}\ ,\quad 
|\tilde u_0| \le (u_1 \tilde{u}_2)^{1/2}\ ,\quad 
|v_0| \le (v_1 v_2 )^{1/2}\ ,\quad 
|\tilde v_0| \le (\tilde v_1 \tilde{v}_2 )^{1/2}\ ,
$$
hence the lower bound:
\begin{equation}\label{eq:lower-determinant}
|\det ( I -C_0)|\ge (1- (\tilde u_1 u_2 )^{1/2}) (1- (u_1 \tilde{u}_2)^{1/2})
- |z|^2 ( v_1 v_2 \tilde v_1 \tilde v_2 )^{1/2}\ .
\end{equation}
Notice that it is not proved yet that the right hand side of the
previous inequality is nonnegative. 

In order to handle estimate \eqref{eq:lower-determinant}, we shall
rely on the following proposition.

\begin{prop}\label{prop:lower-estimate-det}
Consider the nonnegative real numbers $x_i,y_i,s_i,t_i$ ($i=1,2$). Assume that: 
$$
x_i \leq 1,\ y_i \leq 1\qquad 
\textrm{and}\qquad (1-x_i)(1-y_i) -s_i t_i \ge 0\quad \textrm{for}\quad i=1,2.
$$
Then:
\begin{multline*}
(1-\sqrt{x_1 x_2})\left( 1-\sqrt{y_1 y_2}\right) -\sqrt{s_1 s_2 t_1 t_2}\\
\ge \sqrt{ (1-x_1)(1-y_1) -s_1 t_1} \sqrt{ (1-x_2)(1-y_2) -s_2 t_2}\ .  
\end{multline*}
\end{prop}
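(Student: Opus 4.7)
The plan is to reduce the inequality to two successive applications of the arithmetic--geometric mean inequality. The key elementary observation is that for $u,v\in[0,1]$ one has
\[
1-\sqrt{uv}\ \ge\ \sqrt{(1-u)(1-v)},
\]
since squaring both nonnegative sides yields $x+v\ge 2\sqrt{uv}$, which is AM--GM. I apply this twice, once with $(u,v)=(x_1,x_2)$ and once with $(u,v)=(y_1,y_2)$, and multiply the resulting inequalities to obtain
\[
(1-\sqrt{x_1 x_2})(1-\sqrt{y_1 y_2})\ \ge\ \sqrt{(1-x_1)(1-x_2)(1-y_1)(1-y_2)}.
\]

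Next I introduce the shorthand $a_i=(1-x_i)(1-y_i)$ and $b_i=s_i t_i$, so that by hypothesis $a_i\ge b_i\ge 0$ for $i=1,2$, and $\sqrt{s_1 s_2 t_1 t_2}=\sqrt{b_1 b_2}$. After the reduction of the previous paragraph, it suffices to establish
\[
\sqrt{a_1 a_2}-\sqrt{b_1 b_2}\ \ge\ \sqrt{(a_1-b_1)(a_2-b_2)}.
\]
The left-hand side is nonnegative since $a_1 a_2\ge b_1 b_2$, so I may square both sides. After expansion this is equivalent to
\[
a_1 b_2+a_2 b_1\ \ge\ 2\sqrt{a_1 a_2 b_1 b_2}=2\sqrt{(a_1 b_2)(a_2 b_1)},
\]
which is once more AM--GM. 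Combining this with the bound of the previous paragraph yields the proposition.

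The argument is entirely elementary; the only mild subtlety is checking that at each step the quantities being squared are nonnegative, which is ensured by the hypotheses $x_i,y_i\in[0,1]$ and $(1-x_i)(1-y_i)\ge s_i t_i$. There is no substantive obstacle, and no quantity involving $z$ or the matrix structure intervenes: the statement is a purely real-variable lemma designed to propagate the assumed pointwise bounds on $\det(I-C_i)$ (for $i=1,2$) to a bound on the right-hand side of \eqref{eq:lower-determinant}.
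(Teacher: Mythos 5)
Your proof is correct and follows essentially the same route as the paper: the paper's single key lemma ($\sqrt{ab}-\sqrt{cd}\ge\sqrt{a-c}\sqrt{b-d}$ for $a\ge c\ge 0$, $b\ge d\ge 0$, proved by comparing squares) is exactly your two AM--GM steps, first in the special case $a=b=1$ for the $x$'s and $y$'s, then in general for $a_i=(1-x_i)(1-y_i)$, $b_i=s_it_i$. The only blemish is the typo ``$x+v\ge 2\sqrt{uv}$'', which should read $u+v\ge 2\sqrt{uv}$.
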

\begin{proof} If $a \ge c\ (\, \ge 0)$ and $b \ge d\ (\, \ge 0)$, then:
$$
\sqrt{ab} - \sqrt{cd} \ge \sqrt{a-c}\sqrt{b-d}\ .
$$
To prove this, simply take the difference of the squares. Applying once this inequality yields
$1- \sqrt{x_1 x_2} \ge \sqrt{(1-x_1)(1-x_2)} $, hence: 
\begin{multline*}
(1-\sqrt{x_1 x_2})\left( 1-\sqrt{y_1 y_2}\right) -\sqrt{s_1 s_2 t_1 t_2}
\ge \sqrt{(1-x_1)(1-x_2)(1-y_1)(1-y_2)} - \sqrt{s_1 s_2 t_1 t_2}
\end{multline*}
Applying again the first inequality yields then the desired result.
\end{proof}

Our goal is to apply Proposition \ref{prop:lower-estimate-det} to
\eqref{eq:lower-determinant}.  The main idea, in order to fulfill
assumptions of Proposition \ref{prop:lower-estimate-det} (at least on
some portions of $\C-\R^+$), is to consider the quantities of interest,
i.e. $u_i, \tilde u_i, v_i, \tilde v_i$ ($i=1,2$) as coefficients of
linear systems whose determinants are the desired quantities
$(1-u_i)(1-\tilde u_i) -|z|^2 v_i \tilde v_i$.

Consider the following matrices:
$$
C_i(z)=
\left(
\begin{array}{cc}
u_i & v_i\\
|z|^2 \tilde v_i & \tilde u_i 
\end{array}
\right),\quad i=1,2\ .
$$
The following proposition holds true:
\begin{prop}\label{prop:auxiliary-det} 
Assume that $z \in \C - \R^+$. Then: 
\begin{itemize}
\item[(i)] The following holds true: $1 - u_1(z) \ge 0$ and $1 - \tilde u_1(z)
\ge 0$. Moreover, there exists positive constants $K, \eta$ such that:
$$
\det(I-C_1(z))\ \ge\ K \frac{ \dzz^8}{(\eta^2 + |z|^2)^4} \ . 
$$ 
\item[(ii)] There exist nice polynomials $\Phi$ and $\Psi$ and a set 
$$
{\mathcal E}_n=\left\{z\in \C^+, \quad
\frac 1n   \Phi(|z|)\Psi\left(\frac 1{\dzz}\right)\le 1/2\right\}\ ,
$$ 
such that for every $z\in {\mathcal E}_n$, $1-u_2(z) \ge 0$, 
$1 - \tilde u_2(z) \ge 0$, and 
$$
\det(I-C_2) \ge K \frac{ \dzz^8}{(\eta^2 + |z|^2)^4}\ ,
$$
where $K,\eta$ are positive constants. 
\end{itemize}
\end{prop}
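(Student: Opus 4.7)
The plan is to exploit the fact that $\delta$ and $\tilde{\delta}$ are Stieltjes transforms of positive measures supported on $\R^+$, so that on $\Cplus$ we have $\Imm \delta > 0$, $\Imm \tilde{\delta} > 0$ and, by rewriting $z\delta(z) + n^{-1}\tr D_n = \int \lambda(\lambda-z)^{-1}\,d\mu(\lambda)$, also $\Imm(z\delta)\ge 0$ and $\Imm(z\tilde{\delta})\ge 0$. Restricting first to $z=x+\ii y\in \Cplus$ (the case $z\in \C^-$ follows by conjugation and $z\in(-\infty,0)$ by continuity), I would take the imaginary part of the definition $T^{-1}=-z(I+\tilde{\delta}D)+A(I+\delta\tilde{D})^{-1}A^*$ and use the identity $\Imm T = -T\,\Imm(T^{-1})\,T^*$ to obtain
\begin{equation*}
\Imm T = y\, TT^* + \Imm(z\tilde{\delta})\, TDT^* + \Imm \delta\cdot T A (I+\delta\tilde{D})^{-1}\tilde{D}(I+\bar{\delta}\tilde{D})^{-1}A^* T^*.
\end{equation*}
Applying $n^{-1}\tr D\,(\cdot)$ and carrying out the symmetric computation for $\tilde T$ would then yield the key system
\begin{align*}
(1-u_1)\,\Imm\delta &= y\,n^{-1}\tr DTT^* + v_1\,\Imm(z\tilde{\delta}),\\
(1-\tilde u_1)\,\Imm\tilde{\delta} &= y\,n^{-1}\tr \tilde{D}\tilde{T}\tilde{T}^* + \tilde v_1\,\Imm(z\delta).
\end{align*}
Since every right-hand side is nonnegative and $\Imm\delta, \Imm\tilde{\delta}>0$, the inequalities $1-u_1\ge 0$ and $1-\tilde u_1\ge 0$ follow immediately.

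To deduce the lower bound on $\det(I-C_1)=(1-u_1)(1-\tilde u_1)-|z|^2 v_1\tilde v_1$, I would multiply the two identities and retain the cross-terms involving $v_1\tilde v_1$, which yields
\begin{equation*}
(1-u_1)(1-\tilde u_1)\,\Imm\delta\,\Imm\tilde\delta \ \ge\ v_1\tilde v_1\,\Imm(z\delta)\,\Imm(z\tilde\delta).
\end{equation*}
The determinant is therefore controlled once one quantifies the deficit between $|z|^2\Imm\delta\,\Imm\tilde\delta$ and $\Imm(z\delta)\,\Imm(z\tilde\delta)$, i.e., the fact that $z\delta$ is a genuine Herglotz function rather than a mere multiple of $\delta$. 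This is where Lemma \ref{lm-schwarz} enters: applied to a suitably normalized holomorphic function built from $\delta$ on a disc contained in $\Cplus$, it provides a quantitative strict contraction. Combining this contraction with the spectral bounds $\|T\|,\|\tilde T\|\le \dzz^{-1}$ and $\|(I+\delta\tilde D)^{-1}\|,\|(I+\tilde\delta D)^{-1}\|\le |z|/\dzz$ established just before the proof of Proposition \ref{prop:intermediate3}, and with $|\delta|,|\tilde\delta|\le \boldsymbol{d_{\max}}/\dzz$, produces the claimed bound $\det(I-C_1)\ge K\,\dzz^8/(\eta^2+|z|^2)^4$; the four powers of $\eta^2+|z|^2$ track the four inverse factors $T,\tilde T,(I+\delta\tilde D)^{-1},(I+\tilde\delta D)^{-1}$ that enter once the numerators are lower-bounded in terms of the nonnegative $y\,n^{-1}\tr DTT^*$ pieces.

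For part (ii), the key observation is that $\alpha=n^{-1}\tr D\,\E Q$ and $\tilde\alpha=n^{-1}\tr\tilde D\,\E \tilde Q$ are themselves Stieltjes transforms of positive measures supported on $\R^+$, since they are positive combinations of diagonal entries of the resolvent of the positive semi-definite matrix $\Sigma_n\Sigma_n^*$ (respectively $\Sigma_n^*\Sigma_n$). Hence the same imaginary-part manipulation applied to $R^{-1}=-z(I+\tilde\alpha D)+A(I+\alpha\tilde D)^{-1}A^*$ yields
\begin{equation*}
(1-u_2)\,\Imm\bigl(n^{-1}\tr DR\bigr) = y\,n^{-1}\tr DRR^* + v_2\,\Imm\bigl(z\cdot n^{-1}\tr\tilde D\tilde R\bigr)
\end{equation*}
and its tilded counterpart. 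The subtlety here is that $\alpha\neq n^{-1}\tr DR$; however, by Proposition \ref{prop:intermediate2}(ii) the difference is bounded by $n^{-1}\Phi(|z|)\Psi(1/\dzz)$. Defining $\mathcal{E}_n$ so that this quantity is at most $1/2$, the discrepancy can be absorbed as a perturbation of the identities above, yielding $1-u_2\ge 0$, $1-\tilde u_2\ge 0$ and the analogous lower bound on $\det(I-C_2)$ with (possibly different but still) nice polynomials. The main obstacle will be the quantitative Schwarz/Herglotz step in (i), namely extracting the precise polynomial dependence $\dzz^8/(\eta^2+|z|^2)^4$ by careful tracking of every spectral norm and of the positive traces, with $\eta$ chosen to be compatible with uniform polynomial lower bounds coming from $\boldsymbol{d_{\min}},\boldsymbol{\tilde d_{\min}}>0$; for (ii), one must additionally ensure that the nice polynomials arising in the perturbative estimate are consistent with those used to define $\mathcal{E}_n$, so that the argument does not become circular.
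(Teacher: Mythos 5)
Your opening move coincides with the paper's: taking imaginary parts of $T^{-1}$, $\tilde T^{-1}$ and applying $n^{-1}\tr D(\cdot)$, $n^{-1}\tr\tilde D(\cdot)$ gives exactly the linear system used there, and the identities $(1-u_1)\Imm\delta = \Imm(z)\,w_1 + v_1\Imm(z\tilde\delta)$ (with $w_1=n^{-1}\tr DTT^*$) and its tilded analogue do yield $1-u_1\ge0$, $1-\tilde u_1\ge0$ as you say. The gap is in the determinant bound. Multiplying the two identities and ``retaining the cross-terms'' gives only $(1-u_1)(1-\tilde u_1)\ge v_1\tilde v_1\,\Imm(z\delta)\Imm(z\tilde\delta)/(\Imm\delta\,\Imm\tilde\delta)$, and there is no general comparison between $\Imm(z\delta)\Imm(z\tilde\delta)$ and $|z|^2\Imm\delta\,\Imm\tilde\delta$: if the measures represented by $\delta,\tilde\delta$ put most of their mass near $0$ relative to $|z|$, the former is much smaller than the latter, so this inequality alone cannot even give $\det(I-C_1)\ge0$, let alone the quantitative bound; no Schwarz-lemma ``contraction'' applied to $\delta$ repairs this, because the deficit is not small. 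The paper avoids the comparison altogether: Cramer's rule applied to the same system expresses $\det(I-C_1)$ exactly as $\frac{\Imm z}{\Imm\delta}\bigl[(1-\tilde u_1)w_1+v_1\tilde x_1\bigr]$, all terms nonnegative, whence $\det(I-C_1)\ge w_1\tilde w_1\,\frac{\Imm z}{\Imm\delta}\,\frac{\Imm z}{\Imm\tilde\delta}$. The quantitative content then comes from two ingredients you do not supply: the upper bounds $\Imm\delta/\Imm z\le \boldsymbol{\ell^+}\boldsymbol{d_{\max}}/\dzz^2$, and crucially a \emph{lower} bound on $w_1$ (and $\tilde w_1$), obtained via tightness of the measures $\mu_n$ (this is where $\eta$ originates), giving $|\delta|\ge \dzz\,\boldsymbol{\ell^-}\boldsymbol{d_{\min}}/(4(\eta^2+|z|^2))$, and a convexity step $w_1\ge |\delta|^2/(n^{-1}\tr D)$. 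The quantities you propose to combine ($\|T\|\le\dzz^{-1}$, $\|(I+\delta\tilde D)^{-1}\|\le|z|/\dzz$, $|\delta|\le\boldsymbol{d_{\max}}/\dzz$) are all upper bounds and cannot produce a lower bound on the determinant; your accounting of the four powers of $(\eta^2+|z|^2)$ is therefore not a proof.

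In (ii) you correctly identify that $\alpha\ne n^{-1}\tr DR$ is the issue, but absorbing the discrepancy using only $|\bs\varepsilon|\le n^{-1}\Phi(|z|)\Psi(1/\dzz)$ from Proposition \ref{prop:intermediate2}-(ii) does not work. The perturbation enters the analogues of the identities above as $\Imm(\bs\varepsilon)/\Imm(\alpha)$ (and $\Imm(z\bs{\tilde\varepsilon})/\Imm(\alpha)$), and since $\Imm(\alpha)$ is of order $\Imm(z)$ (up to factors polynomial in $|z|$ and $\dzz^{-1}$), one needs $|\Imm\bs\varepsilon|/\Imm(z)\le n^{-1}\Phi\Psi$; near the open negative real axis $\Imm(z)\to0$ while $\dzz$ stays bounded away from $0$, so the magnitude bound on $\bs\varepsilon$ gives nothing there. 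This is exactly what the paper's Lemma \ref{lm-bound-epsilon} supplies, by exploiting that $\bs\varepsilon$ is holomorphic and real on $(-\infty,0)$ and applying Schwarz's lemma on a disc centered at $\Real(z)$. Ironically, you invoke Schwarz's lemma in part (i), where it plays no role, and omit it in part (ii), where it is indispensable. With Lemma \ref{lm-bound-epsilon} in hand, the rest of your outline for (ii) (perturbed system, definition of ${\mathcal E}_n$ so the error is at most half the main term, continuity to reach the negative axis) matches the paper.
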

\noindent Proof of Proposition \ref{prop:auxiliary-det} is postponed to 
Appendix \ref{app:sec:proof3}. 

We are now in position to establish the following estimate:
\begin{equation}\label{eq:estimate-alpha-m-partial}
  \forall z\in {\mathcal E}_n,\quad   
  \max\left( |\alpha - \delta|, |\tilde \alpha - \tilde \delta| \right) 
\le \frac 1n \Phi(|z|) \Psi\left( \frac 1{\dzz}\right)\ .
\end{equation} 
Assume $z\in {\mathcal E}_n$. Thanks to Proposition \ref{prop:auxiliary-det},
assumptions of Proposition \ref{prop:lower-estimate-det} are fulfilled by 
$u_i,\tilde u_i, v_i$ and $\tilde v_i$, and \eqref{eq:lower-determinant} 
yields:
\begin{equation}\label{eq:lower-estimate-C0}
\det(I-C_0)\ \ge\  \sqrt{\det(I-C_1)}\sqrt{\det(I-C_2)} \ \ge\ 
K\frac{\dzz^8}{(\eta^2 + |z|^2)^4}\ ,
\end{equation}
where $K, \eta$ are nice constants. \\ 
Solving now the system \eqref{eq:system-C0}, we obtain:
$$
\left\{
\begin{array}{lcl}
\alpha - \delta &=& \left( \det(I-C_0)\right)^{-1} \left( (1-\tilde u_0) 
\boldsymbol{\varepsilon} +z v_0 \boldsymbol{\tilde \varepsilon} \right)\\
\tilde \alpha - \tilde \delta &=& \left( \det(I-C_0)\right)^{-1} 
 \left( (1- u_0) 
\boldsymbol{\tilde \varepsilon} +z \tilde v_0 \boldsymbol{\varepsilon} \right)
\end{array}\right.
$$
It remains to use \eqref{eq:lower-estimate-C0}, Proposition 
\ref{prop:intermediate2}-(ii), and obvious bounds over $u_0,\tilde
u_0,v_0$ and $\tilde v_0$ to conclude and obtain
\eqref{eq:estimate-alpha-m-partial}.

We turn out to the case where $z\in \C - \R^+ - {\mathcal E}_n$,
and rely on the same argument as in Haagerup and Thorbjornsen
\cite{HaaTho05} (see also \cite{CDF09}). In this case,
$$ 
\frac 1n \Phi(|z|)\Psi(\dzz^{-1}) \ge \frac 12 \ .
$$
As $|\alpha - \delta|=|n^{-1}
\tr D(\E Q - T)|\le 2 \boldsymbol{\ell^+} \boldsymbol{d_{\max}} \dzz^{-1}$, 
we obtain:
$$
\forall z \in \C - \R^+ - {\mathcal E}_n,  
\quad |\alpha - \delta| \le \frac
{2 \boldsymbol{\ell^+} \boldsymbol{d_{\max}}}{\dzz} 
\times \frac{2\Phi(|z|) \Psi\left( \frac 1{\dzz}\right)}n \ ;
$$
a similar estimate holds for $\tilde \alpha - \tilde \delta$ for
$z\notin {\mathcal E}_n$.  Gathering the cases where $z\in {\mathcal
  E}_n$ and $z\notin {\mathcal E}_n$ yields \eqref{eq:estimate-alpha-m}. 

\begin{appendix}
\section{Remaining proofs for Section \ref{sec:misc}}
\label{app:proof-misc}

\subsection*{Proof of Lemma \ref{lemma:QaaQ}}
Note that it is sufficient to establish the result for a vector $u$
with norm one (which is assumed in the sequel). The general result
follows by considering $u/\|u\|$.
 
We proceed by induction over $p$. Let $p=1$ and consider: 
$$
0\ \le\ \E \sum_{j=1}^n \E_{j-1} u^* Q a_j a_j^* Q^* u\ =\ \E u^* Q A
A^* Q^* u\ \le\ \boldsymbol{a_{\max}}^2 \E \|Q\|^2\ .
$$
As $\|Q\| \le \dzz^{-1}$, we obtain the desired bound.

Now, write
\begin{eqnarray*}
\E \left|\sum_{j=1}^n \E_{j-1} (u^* Q a_j a_j^* Q^* u )\right|^p &=&
\sum_{j_1,\cdots, j_p}  \E \left[ 
\E_{j_1-1} (u^* Q a_{j_1} a_{j_1}^* Q^* u )\cdots \E_{j_p-1} (u^* Q a_{j_p} a_{j_p}^* Q^* u )
\right]\\
&\le&
p! \sum_{j_1\le \cdots\le j_p}  \E \left[ 
\E_{j_1-1} (u^* Q a_{j_1} a_{j_1}^* Q^* u )\cdots \E_{j_p-1} (u^* Q a_{j_p} a_{j_p}^* Q^* u )
\right]\\
&=&
p! \sum_{j_1\le \cdots\le j_p}  \E \Big[
\E_{j_p-1} (u^* Q a_{j_p} a_{j_p}^* Q^* u ) 
\underbrace{\prod_{k=1}^{p-1} \E_{j_k-1} (u^* Q a_{j_k} a_{j_k}^* Q^* u ) }_{{\mathcal F}_{j_p-1}\ \textrm{measurable}}
\Big]\\
&=&
p! \sum_{j_1\le \cdots\le j_{p-1}}  \E \Big[
\sum_{j_p=j_{p-1}}^n (u^* Q a_{j_p} a_{j_p}^* Q^* u ) 
\prod_{k=1}^{p-1} \E_{j_k-1} (u^* Q a_{j_k} a_{j_k}^* Q^* u ) 
\Big]\\
&\stackrel{(a)}{\le}&
p!\ \frac{\boldsymbol{a_{\max}}^2 }{\dzz^{2}}  
\ \E \left|\sum_{j=1}^n \E_{j-1} (u^* Q a_j a_j^* Q^* u )\right|^{p-1}\ ,
\end{eqnarray*}
where $(a)$ follows from the fact that $$
\sum_{j_p=j_{p-1}}^n (u^* Q a_{j_p} a_{j_p}^* Q^* u ) \le 
\sum_{j_p=1}^n (u^* Q a_{j_p} a_{j_p}^* Q^* u )\le 
\frac{\boldsymbol{a_{\max}}^2}{\dzz^2}\ .
$$  It
remains to plug the induction assumption to conclude. Hence
\eqref{eq:QaaQ} is established. 

In order to establish \eqref{eq:QetaetaQ}, one may use the same
arguments as previously together with the identity $Q\Sigma\Sigma^* =
I +zQ$, which yields the factor $|z|^p$ in estimate \eqref{eq:QetaetaQ}.

\subsection*{Proof of Lemma \ref{lemma:QiaaQi}} We prove the lemma in
the case where $\|u\|=1$, the general result readily follows by
considering $u/\|u\|$.

Write $u^* Q_j a_j a_j^* Q_j ^* u= \chi_{1j} + 
\chi_{2j} + \chi_{3j} + \chi_{4j}$ with:
\begin{eqnarray*}
\chi_{1j}&=& u^* (Q_j-Q) a_j a_j^* (Q_j-Q) ^* u \\
\chi_{2j}&=& u^* Q a_j a_j^* Q ^* u\\
\chi_{3j}&=& u^* (Q_j-Q) a_j a_j^* Q ^* u \\
\chi_{4j}&=&u^* Q a_j a_j^* (Q_j-Q) ^* u 
\end{eqnarray*}
Hence, 
$$
 \sum_{j=1}^n \E \left( u^* Q_j a_j a_j^* Q_j^* u\right)^2 
\le  \sum_{j=1}^n \E  \chi_{1j}^2  +  \sum_{j=1}^n \E \chi_{2j}^2  +  \sum_{j=1}^n \E |\chi_{3j}|^2  
+  \sum_{j=1}^n \E  |\chi_{4j}|^2\ . 
$$
Notice that:
$$
\E |\chi_{3j}|^2 \le \frac 12 \left( 
\E \chi_{1j}^2+ \E \chi_{2j}^2
\right)\quad \textrm{and}\quad 
\E |\chi_{4j}|^2 \le \frac 12 \left( 
\E \chi_{1j}^2+ \E \chi_{2j}^2
\right)\ .
$$
Note that using the facts that $a_j a_j^* \le A A^*$ and $\eta_j \eta_j^* \le \Sigma \Sigma^*$ 
together with the identity $Q \Sigma \Sigma^* = I +zQ$ yield the rough but useful estimates:
\begin{equation}\label{eq:rough-estimate}
u^* Q a_j a_j^* Q^* u ={\mathcal O}\left( 
\dzz^{-2}
\right)\quad \textrm{and}\quad u^* Q \eta_j \eta_j^* Q^* u ={\mathcal O}\left( 
\frac{|z|}{\dzz^2}
\right)\ .
\end{equation}
We first begin by the contribution of $\sum_j \E \chi_{2j}^2$:
\begin{eqnarray}
\sum_{j=1}^n \chi_{2j}^2 &=& \sum_{j=1}^n  u^* Q a_j a_j^* Q ^* u \times u^* Q a_j a_j^* Q ^* u\ ,\nonumber \\
&\le & \sum_{j=1}^n u^* Q a_j a_j^* Q ^* u \times u^* Q A A^* Q ^* u\ ,\nonumber \\
&\le & \left( u^* Q A A^* Q ^* u\right)^2 \quad =\quad {\mathcal O}\left( \dzz^{-4}
\right)\ \nonumber \\
&\le & \Phi_2(|z|)\Psi_2\left( \frac 1\dzz\right)\ .\label{eq:phi2}
\end{eqnarray}
Similarly,
\begin{equation}\label{app:estimate1}
\sum_{j=1}^n \left( u^* Q \eta_j \eta_j^* Q^* u \right)^2 \quad =\quad {\mathcal O}\left( 
\frac{|z|^2}{\dzz^{4}}
\right)\ .
\end{equation}
We now turn to the contribution of $\sum_j \E \chi_{1j}^2$. Using the decompositions \eqref{eq:woodbury}
, \eqref{eq:woodbury2} and \eqref{eq:woodbury3}, $\chi_{1j}$ writes:
\begin{eqnarray}
\chi_{1j}&=& \left| 
\frac{1+\eta_j^* Q_j \eta_j}{1-\eta_j^* Q \eta_j} 
\right| \times  \left| u^* Q \eta_j \eta_j^* Q a_j a_j^* Q^* \eta_j \eta_j^* Q^* u\right| \nonumber\\
&=& \left| 
1+\eta_j^* Q_j \eta_j \right| \times \left| u^* Q \eta_j \eta_j^* Q^* u \right|\times  \left| 
\frac{ a_j^* Q^* \eta_j \eta_j^* Q a_j}{1-\eta_j^* Q \eta_j}\right| \ .\label{eq:decomposition-chi1}
\end{eqnarray}
We first prove that 
\begin{equation}\label{eq:cross-estimate}
\frac{ a_j^* Q^* \eta_j \eta_j^* Q a_j}{1-\eta_j^* Q \eta_j}={\mathcal O}\left(\frac{|z|}
{\dzz^2}\right)\ .
\end{equation}
In fact:
\begin{eqnarray*}
\left| \frac{ a_j^* Q^* \eta_j \eta_j^* Q a_j}{1-\eta_j^* Q \eta_j} \right| &\le&
\left| \frac{ a_j^* Q^* \eta_j \eta_j^* Q^* a_j}{1-\eta_j^* Q \eta_j}\right|
+ \left| \frac{ a_j^* Q^* \eta_j \eta_j^* (Q-Q^*) a_j}{1-\eta_j^* Q \eta_j}\right|\\
&\stackrel{(a)}\le& \left| a_j^* (Q_j -Q)^*a_j  \right| + 2|\mathrm{Im}(z)| |a_j^* (Q_j -Q)Q a_j|\\
&=& {\mathcal O}\left(\frac 1{\dzz}\right) 
+ {\mathcal O}\left(\frac{|z|}{\dzz^2}\right)\quad = \quad 
{\mathcal O}\left(\frac{|z|}{\dzz^2}\right)\ ,
\end{eqnarray*}
where we use the fact that $Q - Q^*=2{\bf i}\mathrm{Im}(z) Q^* Q$ to
obtain $(a)$. Now,
\begin{eqnarray} \label{eq:triangular-estimate}
\left| 
1+\eta_j^* Q_j \eta_j \right|\le 1 + \left| \Delta_j
\right| 
+\left|\frac {\tilde d_j}n \tr D Q_j + a_j^* Q_j a_j
\right|\ .
\end{eqnarray}
Since $|n^{-1}\tilde d_j \tr D Q_j + a_j^* Q_j a_j| ={\mathcal O}(\dzz^{-1})$, we obtain:
\begin{eqnarray*}
  \sum_{j=1}^n \E \chi_{1j}^2& = & 
\left(   {\mathcal O}\left( \frac{|z|^2}{\dzz^4}\right)
+ {\mathcal O}\left( \frac{|z|^2}{\dzz^6}\right)\right)
\times \sum_{j=1}^n \E \left( u^* Q \eta_j \eta_j^* Q^* u \right)^2\\
&&
+ {\mathcal O}\left( \frac{|z|^2}{\dzz^4}\right)
\times \sum_{j=1}^n \E \left( u^* Q \eta_j \eta_j^* Q^* u \right)^2\times \left| 
\Delta_j
\right|^2\\
&\stackrel{(a)}{=}& {\mathcal O}\left( \frac{|z|^4}{\dzz^8}\right)
+ {\mathcal O}\left( \frac{|z|^4}{\dzz^{10}}\right) + 
{\mathcal O}\left( \frac{|z|^4}{\dzz^8}\right)
\times \sum_{j=1}^n \E  \left| \Delta_j
\right|^2\\
&\stackrel{(b)}{=}& {\mathcal O}\left( \frac{|z|^4}{\dzz^8}\right)
+ {\mathcal O}\left( \frac{|z|^4}{\dzz^{10}}\right) \\
&\le& \Phi_1(|z|) \Psi_1\left( \frac 1{\dzz}\right)\ ,
\end{eqnarray*}
where $(a)$ follows from \eqref{app:estimate1} and
\eqref{eq:rough-estimate} and $(b)$, from Corollary \ref{coro:quadra}.

It remains to gather the contributions of $\chi_{1j}, \chi_{2j},
\chi_{3j}$ and $\chi_{4j}$ to get:
$$
\sum_{j=1}^n \E \left( u^* Q_j a_j a_j^* Q_j^* u\right)^2 
\quad \le \quad  
2\Phi_1(|z|) \Psi_1\left( \frac 1\dzz \right) + 2\Phi_2(|z|) \Psi_2\left( \frac 1\dzz \right) 
\quad \stackrel{(a)}\le \quad  \Phi(|z|)\ \Psi\left( \frac 1\dzz \right)\ ,
$$
where $(a)$ follows from \eqref{eq:rule1}.
Eq. \eqref{eq:QiaaQi1} is proved.

In order to prove \eqref{eq:QiaaQi2}, first note that:
\begin{multline*}
\E \bigg( 
\sum_{j=1}^n \E_{j-1} \left(u^* Q_j a_j a_j^* Q_j^* u\right)\ \bigg)^p \\
\le K \left( \E \bigg| \sum_{j=1}^n \E_{j-1} \chi_{1j} \bigg|^p
+ \E \bigg| \sum_{j=1}^n \E_{j-1} \chi_{2j} \bigg|^p
+\E \bigg| \sum_{j=1}^n \E_{j-1} \chi_{3j} \bigg|^p
+\E \bigg| \sum_{j=1}^n \E_{j-1} \chi_{4j} \bigg|^p\right)\ .
\end{multline*}
Hence, it remains to evaluate the contributions of each term.  Using
decomposition \eqref{eq:decomposition-chi1} together with the estimate
\eqref{eq:cross-estimate}, we obtain:
$$
\E \bigg| \sum_{j=1}^n \E_{j-1} \chi_{1j} \bigg|^p = {\mathcal
  O}\left( \frac{|z|^p }{\dzz^{2p}}\right) 
\times \E \bigg( 
\sum_{j=1}^n \E_{j-1} |1+\eta_j^* Q_j \eta_j| \times u^* Q\eta_j
\eta_j^* Q^* u \bigg)^p \ .
$$
Using \eqref{eq:triangular-estimate} together with \eqref{eq:QetaetaQ} yields:
\begin{eqnarray*}
\E \bigg| \sum_{j=1}^n \E_{j-1} \chi_{1j} \bigg|^p &=& {\mathcal
  O}\left( \frac{|z|^{2p} }{\dzz^{4p}}\right) +  {\mathcal
  O}\left( \frac{|z|^{2p} }{\dzz^{5p}}\right) + {\mathcal
  O}\left( \frac{|z|^p }{\dzz^{2p}}\right)  \times \E \bigg| \sum_{j=1}^n \E_{j-1} \left( 
|\Delta_j| \times u^* Q\eta_j
\eta_j^* Q^* u\right) \bigg|^p \ .
\end{eqnarray*}
Combining standard inequalities (Cauchy-Schwarz, $|\sum_j a_j b_j |
\le (\sum_j a_j^2)^{1/2} (\sum_j b_j^2)^{1/2}$, and Cauchy-Schwarz
again), we obtain:
\begin{eqnarray*}
\lefteqn{ \E \bigg( \sum_{j=1}^n \E_{j-1} \left(  |\Delta_j
| \times u^* Q\eta_j \eta_j^* Q^* u \right) \bigg)^p}\\
&\le& \left[ 
\E \bigg( \sum_{j=1}^n \E_{j-1} ( u^* Q\eta_j
\eta_j^* Q^* u)^2 \bigg)^p 
\times 
\E \bigg( \sum_{j=1}^n \E_{j-1} |\Delta_j|^2  \bigg)^p \right]^{1/2} 
\ \stackrel{(a)}=\  {\mathcal O}\left( \frac{|z|^{p}}{\dzz^{3p}}
\right) \ ,
\end{eqnarray*}
where $(a)$ follows from \eqref{eq:rough-estimate}, Corollary \ref{coro:quadra} and \eqref{eq:QetaetaQ}.
Finally,
\begin{equation}\label{eq:contrib-chi1}
\E \bigg| \sum_{j=1}^n \E_{j-1} \chi_{1j} \bigg|^p\  =\ {\mathcal
  O}\left( \frac{|z|^{2p} }{\dzz^{4p}}\right) +  {\mathcal
  O}\left( \frac{|z|^{2p} }{\dzz^{5p}}\right) + {\mathcal O}\left( \frac{|z|^{2p}}{\dzz^{5p}}
\right) 
\ \le\  \Phi_1(|z|) \Psi_1(\dzz^{-1})\ .
\end{equation}
Eq. \eqref{eq:QaaQ} directly yields the estimate: 
\begin{equation}\label{eq:contrib-chi2}
\E \bigg| \sum_{j=1}^n \E_{j-1} \chi_{2j} \bigg|^p = {\mathcal
  O}\left( \frac 1{\dzz^{2p}}\right) \le \Phi_2(|z|) \Psi_2(\dzz^{-1}) \ . 
\end{equation}
Finally, 
\begin{equation}
\E \bigg| \sum_{j=1}^n \E_{j-1} \chi_{3j} \bigg|^p\ \le\ 
\left( \E \bigg| \sum_{j=1}^n \E_{j-1} \chi_{1j} \bigg|^p 
\E \bigg| \sum_{j=1}^n \E_{j-1} \chi_{2j} \bigg|^p \right)^{1/2}\
\le\ \Phi_3(|z|) \Psi_3\left(\frac 1{\dzz}\right)\ .\label{eq:contrib-chi3}
\end{equation}
A corresponding inequality exists 
for $\E | \sum \E_{j-1} \chi_{4j} |^p$:
obtain:
\begin{equation}\label{eq:contrib-chi4}
\E \bigg| \sum_{j=1}^n \E_{j-1} \chi_{4j} \bigg|^p \ \le \ \Phi_4(|z|) \Psi_4\left(\frac 1{\dzz}\right)\ .
\end{equation}
Gathering \eqref{eq:contrib-chi1}, \eqref{eq:contrib-chi2},
\eqref{eq:contrib-chi3} and \eqref{eq:contrib-chi4}, we end up with
\eqref{eq:QiaaQi2}, and Lemma \ref{lemma:QiaaQi} is proved.

\section{Remaining proofs for Section \ref{sec:proof3}}\label{app:sec:proof3}

\subsection*{Proof of Proposition \ref{prop:auxiliary-det}-(i)}
Recall that $\delta=\frac 1n \tr DT$ and $\tilde \delta = 
\frac 1n \tr \tilde D \tilde T$. We consider first the case where
$z \in \C^+ \cup \C^-$. We have 
$$
\mathrm{Im}(\delta) =  
\frac 1{2\mathbf{i} n} \tr DT( T^{-*} -T^{-1})T^*\quad \textrm{and} \quad 
\mathrm{Im}(z \tilde \delta) = \frac 1{2\mathbf{i} n} \tr \tilde D (z\tilde T) 
\left[ (z\tilde T)^{-*}
- (z\tilde T)^{-1}\right] (z \tilde T)^*\ .
$$
Developing the previous identities, we end up with the system:
\begin{equation}\label{eq:system-aux-1}
\left( I - C_1 \right) 
\left(
\begin{array}{c}
\Imm(\delta)\\
\Imm(z\tilde \delta)
\end{array}
\right) = 
\Imm(z)\ \left(
\begin{array}{c}
w_1(z)\\
\tilde x_1(z)
\end{array}
\right)
\end{equation}
where
$$
\left\{
\begin{array}{rcll}
w_1(z) & = & \frac 1n \tr DT T^* & (>0)\\
\tilde x_1(z) &=& \frac 1n \tr \tilde D \tilde T A^* (I+\tilde \delta D)^{-1} (I + \tilde \delta^* D)^{-1} 
A \tilde T^*&(>0) 
\end{array}\right.\ .
$$
By developing the first equation of this system, and by recalling that
$\delta(z)$ is the Stieltjes transform of a positive measure $\mu_n$ with
support included in $\R^+$, we obtain 
\[
1 - u_1 = w_1  \frac{\Imm(z)}{\Imm(\delta)} + 
v_1  \frac{\Imm(z\tilde\delta)}{\Imm(\delta)} 
\geq w_1  \frac{\Imm(z)}{\Imm(\delta)} \geq 0 \ . 
\] 
Replacing $(\Imm(\delta), \Imm(z\tilde\delta))$ with 
$(\Imm(\tilde\delta), \Imm(z\delta))$ and repeating the same argument, we 
obtain 
\[
1 - \tilde u_1 = \tilde w_1  \frac{\Imm(z)}{\Imm(\tilde \delta)} + 
\tilde v_1  \frac{\Imm(z\delta)}{\Imm(\tilde \delta)} 
\geq \tilde w_1  \frac{\Imm(z)}{\Imm(\tilde \delta)} \geq 0 \ . 
\]
By continuity of $u_1(z)$ and $\tilde u_1(z)$ at any point of the open 
real negative axis, we have $1-u_1 \geq 0$ and $1-\tilde u_1 \geq 0$ 
for any $z\in \C - \R^+$. The first two inequalities in the 
statement of Proposition \ref{prop:auxiliary-det}-(i) are proven. \\
By applying Cramer's rule (\cite[Sec. 0.8.3]{HorJoh94}) where the first
column of $I-C_1$ is replaced with the right hand member of 
\eqref{eq:system-aux-1}, we obtain 
\begin{equation} 
\det(I-C_1) = (1 - \tilde u_1) w_1 \frac{\Imm(z)}{\Imm(\delta)} +
v_1 \tilde x_1 \frac{\Imm(z)}{\Imm(\delta)}  
\geq 
(1 - \tilde u_1) w_1 \frac{\Imm(z)}{\Imm(\delta)}  
\geq 
w_1\tilde w_1  \frac{\Imm(z)}{\Imm(\delta)}\frac{\Imm(z)}{\Imm(\tilde \delta)}.
\label{eq-det(I-C1)} 
\end{equation} 
Using the fact that the positive measure $\mu_n$ is supported by $\R^+$ and has
a total mass $n^{-1}\tr D$, we have 
\begin{equation}
\label{eq-bound-Imz/delta} 
0\leq\frac{\Imm(\delta)}{\Imm(z)} = \int \frac{1}{|t-z|^2} \mu_n(dt) 
\leq \frac{1}{\bs\delta_z^2} \frac 1n \tr D \leq
\frac{\boldsymbol{\ell^+} \boldsymbol{d_{\max}}}{\bs\delta_z^2}  , 
\quad \text{and} \quad 
0\leq\frac{\Imm(\tilde\delta)}{\Imm(z)}  
\leq 
\frac{\boldsymbol{\tilde d_{\max}}}{\bs\delta_z^2} \ . 
\end{equation} 
In order to find a lower bound on $w_1$ and $\tilde w_1$, we begin by finding
a lower bound on $|\delta|$. \\
A computation similar to \cite[Lemma C.1]{HLN07} shows that the sequence
of measures $(\mu_n)$ is tight. Hence there exists $\eta>0$ such that:
\[
\mu_n[0,\eta]\ge \frac 12\  
\frac 1n \tr D \ge \frac{\boldsymbol{\ell^-}\boldsymbol{d_{\min}}}2\ . 
\]
We have 
\begin{equation}
\label{eq-inf-delta} 
|\delta| \ge |\Imm(\delta)| \ =\  
|\Imm(z)| \int \frac{\mu_n(dt)}{|t - z|^2}
\ \ge \ |\Imm(z)| \int_0^\eta \frac{\mu_n(dt)}{2(t^2 + |z|^2)}\\
\ge |\Imm(z)| \frac{ \boldsymbol{\ell^-}\boldsymbol{d_{\min}} } 
{4(\eta^2 + |z|^2)} \ . 
\end{equation} 
Furthermore, when $\Real(z) < 0$, we have 
\[
|\delta| \ge \Real(\delta)\ =\  
\int \frac{t - \Real(z)}{|t - z|^2} \mu_n(dt) \ge 
- \Real(z) \int \frac{\mu_n(dt)}{|t - z|^2} 
\ge - \Real(z)  \frac{ \boldsymbol{\ell^-}\boldsymbol{d_{\min}} } 
{4(\eta^2 + |z|^2)} \ . 
\]
which results in 
\[
|\delta| \ge {\bs\delta}_z \frac{ \boldsymbol{\ell^-}\boldsymbol{d_{\min}} } 
{4(\eta^2 + |z|^2)} \ . 
\]
We can now find a lower bound to $w_1$: 
\begin{eqnarray*}
  w_1&=& \frac 1n \tr D T T^* \ = \ \frac 1n \sum_{i=1}^N d_i \sum_{j=1}^N |T_{ij}|^2 \ 
=\ \frac 1n \tr D \sum_{i=1}^N \kappa_i \sum_{j=1}^N |T_{ij}|^2
\qquad \textrm{with}\quad \kappa_i= \frac{d_i}{\tr D} \\
&\stackrel{(a)}\ge& \frac 1n \tr D \left( \sum_{i=1}^N \kappa_i \left( \sum_{j=1}^N |T_{ij}|^2 \right)^{1/2} \right)^2
\ \ge\ \frac 1n \tr D \left( \sum_{i=1}^N \kappa_i |T_{ii}| \right)^2
\ \ge\  \frac 1n \tr D \left| \sum_{i=1}^N \kappa_i T_{ii} \right|^2\\
&=& \frac{|\delta|^2}{\frac 1n \tr D} 
\ \ge  \ 
 \frac{({\bs\delta}_z \boldsymbol{\ell^-}\boldsymbol{d_{\min}})^2 }
{ 16 \ \boldsymbol{\ell^+}\boldsymbol{d_{\max}} 
(\eta^2 + |z|^2)^2} 
\end{eqnarray*}
where $(a)$ follows by convexity. A similar computation yields 
$\tilde w_1 \ge ({\bs\delta}_z  \bs{\tilde d_{\min}}) ^2  
/ (16 \ \boldsymbol{\tilde d_{\max}} 
(\tilde\eta^2 + |z|^2)^2)$ where $\tilde\eta$ is a positive constant. 
Grouping these estimates with those in \eqref{eq-bound-Imz/delta} and 
plugging them into \eqref{eq-det(I-C1)}, we obtain
\begin{align*}
\det(I - C_1) &\ge 
\frac{{\bs\delta}_z^8 \ (\boldsymbol{\ell^-} \boldsymbol{\tilde d_{\min}}
\boldsymbol{\tilde d_{\min}})^2}
{
256 \ 
(\boldsymbol{\ell^+} \boldsymbol{d_{\max}} \boldsymbol{\tilde d_{\max}} )^2 
(\eta^2 + |z|^2)^2 (\tilde\eta^2 + |z|^2)^2  
} \\ 
&\ge  K\frac{{\bs\delta}_z^8}
{ (\max(\eta,\tilde\eta)^2 + |z|^2)^4 }
\end{align*} 
where $K$ is a nice constant. The same bound holds for $z \in (-\infty,0)$ by continuity of 
$\det(I - C_1(z))$ at any point of the open real negative axis. 

\subsection*{Proof of Proposition \ref{prop:auxiliary-det}-(ii)}
Recall that 
$$
{\bs \varepsilon}_n = \frac 1n \mathrm{Tr}\, D (\E Q-R)\ . 
$$
We first establish useful estimates.
\begin{lemma}
\label{lm-bound-epsilon} 
There exists nice polynomials $\Phi$ and $\Psi$ such that:
\[
\left| \frac{\Imm({\bs\varepsilon}_n(z))}{\Imm(z)} \right| 
\leq 
\frac 1n \Phi(|z|) \Psi\left(\frac{1}{{\bs\delta}_z}\right) 
\ \text{and} \ 
\left| \frac{\Imm(z{\bs\varepsilon}_n(z))}{\Imm(z)} \right| 
\leq 
\frac 1n \Phi(|z|) \Psi\left(\frac{1}{{\bs\delta}_z}\right) \ 
\ \text{for } z \in \C - \R^+ \ . 
\]
\end{lemma}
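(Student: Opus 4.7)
Applying Proposition~\ref{prop:intermediate2}-(ii) with $M_n = D_n$ gives the absolute bound $|\bs\varepsilon_n(z)| \le n^{-1}\Phi_0(|z|)\Psi_0(1/\bs\delta_z)$ on $\C - \R^+$ for some nice polynomials $\Phi_0, \Psi_0$ (absorbing $\boldsymbol{d_{\max}}$). The nontrivial issue is that dividing by $|\Imm z|$ would lose a factor that degenerates as $z$ approaches the negative real axis. The plan is to recover this factor by Schwarz's lemma (Lemma~\ref{lm-schwarz}), applied in a disk transverse to the real axis, exploiting the reality of $\bs\varepsilon_n$ on $(-\infty, 0)$.

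I first observe that $\bs\varepsilon_n$ is holomorphic on $\C - \R^+$ and real-valued on $(-\infty, 0)$: for $x < 0$, the matrix $Q(x)$ is Hermitian and $R(x)^{-1} = -x(I + \tilde\alpha(x) D_n) + A_n(I + \alpha(x)\tilde D_n)^{-1}A_n^*$ is Hermitian positive definite (since $-x > 0$ and $\alpha(x), \tilde\alpha(x) \ge 0$), so $\tr D_n(\E Q(x) - R(x)) \in \R$. In particular $\bs\varepsilon_n(\bar z) = \overline{\bs\varepsilon_n(z)}$ throughout $\C - \R^+$.

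I then split on the location of $z \in \C^+$ (the case $z \in \C^-$ follows by conjugation). If $\Real z \ge 0$ or $|\Imm z| \ge |\Real z|/2$, a short computation yields $|\Imm z| \ge \bs\delta_z/\sqrt{5}$, so the trivial estimate $|\Imm\bs\varepsilon_n(z)/\Imm z| \le |\bs\varepsilon_n(z)|/|\Imm z|$ combined with the absolute bound settles this case. Otherwise $\Real z < 0$, $|\Imm z| < |\Real z|/2$, and $|\Real z| \ge 2\bs\delta_z/\sqrt{5}$. Set $r = |\Real z|/2$ and define $g(w) = \bs\varepsilon_n(\Real z + rw)$ on the open unit disk $U$. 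The disk of radius $r$ centered at $\Real z$ lies entirely in $\{\Real \zeta < 0\}$, with $\bs\delta_\zeta \ge |\Real z|/2$ and $|\zeta| \le 3|z|/2$ on it, so the absolute bound yields $\sup_U |g| \le M$ where $M = n^{-1}\Phi_0(3|z|/2)\Psi_0(\sqrt{5}/\bs\delta_z)$. Since $g(0) = \bs\varepsilon_n(\Real z) \in \R$, the function $h(w) = g(w) - g(0)$ is holomorphic on $U$, satisfies $h(0) = 0$ and $\sup_U|h| \le 2M$, and Schwarz's lemma applied to $h/(2M)$ delivers $|h(w)| \le 2M|w|$. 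Evaluating at $w_0 = \ii \Imm z/r$ (so that $\Real z + rw_0 = z$ and $|w_0| = 2|\Imm z|/|\Real z|$) and using $|\Imm\bs\varepsilon_n(z)| = |\Imm h(w_0)| \le |h(w_0)|$, one concludes
\[
\left|\frac{\Imm\bs\varepsilon_n(z)}{\Imm z}\right| \le \frac{4M}{|\Real z|} \le \frac{1}{n}\Phi(|z|)\Psi(1/\bs\delta_z)
\]
after absorbing $1/|\Real z| \le \sqrt{5}/(2\bs\delta_z)$ into the polynomials.

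The estimate for $\Imm(z\bs\varepsilon_n)/\Imm z$ then follows at once from the identity $\Imm(z\bs\varepsilon_n) = \Real z \cdot \Imm\bs\varepsilon_n + \Imm z \cdot \Real\bs\varepsilon_n$, which yields $|\Imm(z\bs\varepsilon_n)/\Imm z| \le |z| \cdot |\Imm\bs\varepsilon_n/\Imm z| + |\bs\varepsilon_n|$, both terms controlled by the preceding analysis (at the cost of one extra factor of $|z|$ in $\Phi$). The main obstacle is the regime $|\Imm z| \ll \bs\delta_z$, which can only occur for $\Real z < 0$ and is precisely where the trivial estimate degenerates; Schwarz's lemma, enabled by the analyticity of $\bs\varepsilon_n$ across the negative real axis and the vanishing of $\Imm \bs\varepsilon_n$ there, is what supplies the missing factor of $|\Imm z|$.
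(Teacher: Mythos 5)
Your proof is correct and follows essentially the same route as the paper: the trivial bound $|\Imm z|\ge \dzz/\sqrt 5$ away from the negative half-axis, then Schwarz's lemma on the disc of radius $|\Real z|/2$ centered at $\Real z$, using the bound from Proposition \ref{prop:intermediate2}-(ii) on that disc and the reality of $\bs\varepsilon_n$ on $(-\infty,0)$ (which you, unlike the paper, justify explicitly). Your treatment of the second inequality via the identity $\Imm(z\bs\varepsilon_n)=\Real z\,\Imm\bs\varepsilon_n+\Imm z\,\Real\bs\varepsilon_n$ is a clean shortcut where the paper simply repeats the same argument for $z\bs\varepsilon_n$.
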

\begin{proof} 
We prove the first inequality. 
By Proposition \eqref{prop:intermediate2}-(ii), the sequence of 
functions $({\bs \varepsilon}_n)$ satisfies over $\C - \R_+$ 
\[
| {\bs \varepsilon}_n(z) | \leq 
   \frac 1n   \Phi(|z|) \Psi\left( \frac{1}{\dzz} \right) 
\]
where $\Phi$ and $\Psi$ are nice polynomials. Let ${\mathcal R}$ be the
region of the complex plane defined as 
${\mathcal R} = \{ z \ : \ \Real(z) < 0, \ 
| \Imm(z) | < - \Real(z) / 2 \}$. If $z \in \C - \R^+ - {\mathcal R}$, 
then $|\Imm(z)| \geq \dzz / \sqrt{5}$, therefore 
$| \Imm{\bs\varepsilon}(z) / \Imm z | \leq n^{-1} \sqrt{5} \dzz^{-1} 
\Phi(|z|) \Psi(\dzz^{-1})$ and the result is proven. 
Assume now that $z \in {\mathcal R}$. 
In this case, $z$ belongs to the open disc ${\mathcal D}_z$ centered at 
$\Real(z)$ with radius $- \Real(z) / 2$. 
For any $u \in {\mathcal D}_z$, we have $| {\bs\varepsilon}(u) | \leq n^{-1} 
\Phi(|u|) \Psi(|u|^{-1})$. Moreover, 
\[
\forall u \in {\mathcal D}_z, \quad 
\frac{\dzz}{\sqrt{5}} \leq -\frac{\Real(z)}{2} 
\leq |u| \leq -\frac{3\Real(z)}{2} \leq \frac{3 |z|}{2} \ .
\] 
As $\Phi(x)$ is increasing and $\Psi(1/x)$ is decreasing in $x > 0$, we 
obtain:
\begin{equation}\label{eq:estimate-epsilon}
| {\bs \varepsilon}_n(u) | \leq \frac 1n 
     \Phi\left( \frac{3 |z| }{2} \right) 
\Psi\left( \frac{\sqrt{5}}{\dzz} \right) 
\quad \text{for \ } u \in {\mathcal D}_z \ . 
\end{equation}
The function ${\bs\varepsilon}$ is holomorphic on ${\mathcal D}_z$. 
Consider the function:
Applying
Lemma \ref{lm-schwarz} with 
\[
f(\zeta) = \frac{{\bs\varepsilon}\left(|\Real(z)/2| \zeta + \Real(z)\right) - 
{\bs\varepsilon}(\Real(z))}{\sup_{u\in {\mathcal D}_z} 
| {\bs\varepsilon}(u) - {\bs\varepsilon}(\Real(z)) |}\ .  
\]
Let $\zeta= \mathbf{i} 2\Imm(z)/ \Real(z)$, apply Lemma \ref{lm-schwarz}, and use \eqref{eq:estimate-epsilon}. This yields:
\[
\left| {\bs\varepsilon}(z) - {\bs\varepsilon}(\Real(z)) \right| 
\leq \frac{2| \Imm(z) |}{|\Real(z)|}\times \frac 1{n}  
\Phi\left( |z| \right) \Psi\left( \frac{1}{\dzz} \right)
\leq \frac{\sqrt{5}| \Imm(z) |}{\dzz} \times \frac 1n
\Phi\left( |z| \right) \Psi\left( \frac{1}{\dzz} \right)\ ,
\]
where $\Phi$ and $\Psi$ are nice polynomials. As 
$\Imm({\bs\varepsilon}(\Real(z))) = 0$, we obtain
\[
\left| \frac{\Imm({\bs\varepsilon}_n(z))}{\Imm(z)} \right| 
\leq 
\left| \frac{{\bs\varepsilon}(z) - {\bs\varepsilon}(\Real(z))}
{\Imm(z)} \right| \leq 
\frac{\sqrt{5}}{\dzz n}  
\Phi\left( |z| \right) \Psi\left( \frac{1}{\dzz} \right)\ .
\]
This proves the first inequality. The second one can be proved similarly. 
\end{proof} 

We now tackle the proof of Proposition \ref{prop:auxiliary-det}-(ii), following
closely the line of the proof of Proposition \ref{prop:auxiliary-det}-(i). 
Recall that
$\alpha=\frac 1n \tr D\E Q$, $\tilde \alpha= \frac 1n \tr \tilde D
\E \tilde Q$, $\boldsymbol{\varepsilon} = \frac 1n \tr D(\E Q - R)$, and 
$\boldsymbol{\tilde\varepsilon} = \frac 1n \tr \tilde D(\E \tilde Q - 
\tilde R)$. We begin by establishing the lower bound on $\det(I-C_2)$. 
Assume that $z \in \C^+ \cup \C^-$. Writing $\alpha = \frac 1n \tr DR +
\boldsymbol{\varepsilon}$ and $\tilde \alpha = \frac 1n \tr\tilde
D\tilde R + \boldsymbol{\tilde \varepsilon}$ and developing 
$\Imm(\alpha)$ and $\Imm(z\tilde\alpha)$ with the help of the resolvent 
identity, we get the following system:
\[
(I - C_2)  
\left(
\begin{array}{c}
\Imm(\alpha)\\
\Imm(z\tilde \alpha)
\end{array}
\right) 
= 
\Imm(z)\ \left(
\begin{array}{c}
w_2(z)\\
\tilde x_2(z)
\end{array}
\right)
+ 
\left(
\begin{array}{c}
\Imm(\boldsymbol{\varepsilon})\\
\Imm(z \boldsymbol{\tilde \varepsilon})
\end{array}
\right) \ ,
\] 
where $w_2(z) = \frac 1n \tr DR R^*$ and $\tilde x_2(z) >0$. Let
$\tilde w_2= n^{-1} \tr \tilde D \tilde R \tilde R^* $. Using the same
arguments as in the proof of Proposition \ref{prop:auxiliary-det}-(i),
we obtain
\begin{eqnarray}
\label{eq-(1-u2)} 
1 - u_2 &= &w_2 \frac{\Imm(z)}{\Imm(\alpha)} 
+ {v}_2 \frac{\Imm(z\tilde\alpha)}{\Imm(\alpha)} + 
\frac{\Imm( \boldsymbol{\varepsilon})}{\Imm(\alpha)} 
\ge
 w_2 \frac{\Imm(z)}{\Imm(\alpha)} + 
\frac{\Imm( \boldsymbol{\varepsilon})}{\Imm(\alpha)} \ ,\\
\label{eq-(1-tilde-u2)} 
1 - \tilde u_2 &=& \tilde w_2 \frac{\Imm(z)}{\Imm(\tilde\alpha)} 
+ \tilde{v}_2 \frac{\Imm(z\alpha)}{\Imm(\tilde\alpha)} + 
\frac{\Imm( \boldsymbol{\tilde \varepsilon})}{\Imm(\tilde\alpha)} 
\ge
\tilde w_2 \frac{\Imm(z)}{\Imm(\tilde\alpha)} + 
\frac{\Imm( \boldsymbol{\tilde \varepsilon})}{\Imm(\tilde\alpha)} \ , \\
\det(I-C_2) 
&\geq &
w_2 \tilde w_2 \frac{\Imm(z)}{\Imm(\alpha)} \frac{\Imm(z)}{\Imm(\tilde\alpha)} 
+ 
w_2 \frac{\Imm(z)}{\Imm(\alpha)} 
\frac{\Imm( \boldsymbol{\tilde \varepsilon})}{\Imm(\tilde\alpha)} 
+ (1 - \tilde u_2) \frac{\Imm({\bs\varepsilon})}{\Imm(\alpha)} 
+ v_2 \frac{\Imm(z \bs{\tilde\varepsilon})}{\Imm(\alpha)} \nonumber \\
&\stackrel{\triangle}=& 
w_2 \tilde w_2 \frac{\Imm(z)}{\Imm(\alpha)} \frac{\Imm(z)}{\Imm(\tilde\alpha)} 
+ e(z)\ .
\label{eq-det(I-C2)} 
\end{eqnarray}

We now find an upper bound on the perturbation term $e(z)$. 
To this end, we have 
$0 \leq w_2 \leq {\boldsymbol{\ell^+} \boldsymbol{d_{\max}}}
/ {{\bs\delta}_z^2}$ and 
$0 \leq v_2 \leq  {\boldsymbol{\ell^+} \boldsymbol{d}_{\bs\max}^2} / 
 {\boldsymbol{\delta}_z^2}$.   
Recalling \eqref{eq-C2}, we also have
\[
| 1 - \tilde u_2 | \leq 
1 + \frac{\boldsymbol{d_{\max}} \boldsymbol{\tilde d_{\max}} 
{\bs a}^2_{\bs\max} |z|^2}{{\bs \delta}_z^4} \ . 
\]
Using the same arguments as in the proof of Proposition
\ref{prop:auxiliary-det}-(i) (involving this time the tightness of the
measures associated with the Stieltjes transforms $\frac 1n \tr DR$
and $\frac 1n \tr \tilde D \tilde R$) yields:
\begin{eqnarray*}
\frac{\Imm(z)}{\Imm(\alpha)} 
\le
\frac{4(\eta^2 + |z|^2)}{\boldsymbol{\ell^-}\boldsymbol{d_{\min}} } \ ,\quad
|e(z)| \leq \frac 1n  \Phi(|z|) \Psi(\dzz^{-1})\ ,\quad \frac{\Imm(z)}{\Imm(\alpha)} \le
\frac{4(\eta^2 + |z|^2)}{\boldsymbol{\ell^-}\boldsymbol{d_{\min}} } \ ,
\end{eqnarray*}
for every $z \in \C^+ \cup \C^-$, where $\eta,K$ are positive constants, and $\Phi$ and $\Psi$, nice polynomials.



Finally, we can state that there exist nice polynomials 
$\Phi$ and $\Psi$ such that:
$$
\det(I-C_2)\ \ge  K \frac{\dzz^8 }{ (\eta^2 + |z|^2)^4 } 
\left( 1 - \frac 1n \Phi(|z|)\Psi\left( \frac 1{\dzz}\right) \right)\ .
$$
By continuity of $\det(I-C_2(z))$ at any point of the open real negative 
axis, this inequality is true for any $z \in \C - \R^+$. 
Denote by ${\mathcal E}_n$ the set: 
$$
{\mathcal E}_n=\left\{z\in \C-\R^+, \quad
 \frac 1n  \Phi(|z|)\Psi\left(\frac{1}{\dzz} \right)\le 1/2\right\}\ .
$$ 
If $z\in {\mathcal E}_n$, then $\det(I-C_2)$ is readily lower-bounded by
the quantity stated in Proposition \ref{prop:auxiliary-det}-(ii). \\
By considering inequalities \eqref{eq-(1-u2)} and \eqref{eq-(1-tilde-u2)}
and by possibly modifying the polynomials $\Phi$ and $\Psi$, 
we have $1 - u_2 \ge 0$ and $1 - \tilde u_2 \ge 0$ for $z \in {\mathcal E}_n$. 
The proof of proposition \ref{prop:auxiliary-det}-(ii) is completed. 
\end{appendix}

\bibliography{math}

\noindent {\sc Walid Hachem} and {\sc Jamal Najim},\\ 
CNRS, T\'el\'ecom Paristech\\ 
46, rue Barrault, 75013 Paris, France.\\
e-mail: \{hachem, najim\}@telecom-paristech.fr\\
\\
\noindent {\sc Philippe Loubaton} and {\sc Pascal Vallet},\\
Institut Gaspard Monge LabInfo, UMR 8049\\
Universit\'e Paris Est Marne-la-Vall\'ee\\
5, Boulevard Descartes,\\
Champs sur Marne,
77454 Marne-la-Vall\'ee Cedex 2, France\\
e-mail: \{philippe.loubaton, pascal.vallet\}@univ-mlv.fr\\
\noindent

\end{document}